\theoremstyle{plain}
\newtheorem{theorem}{Theorem}[section]
\newtheorem{lemma}[theorem]{Lemma}
\newtheorem{definition}[theorem]{Definition}
\newtheorem{corollary}[theorem]{Corollary}
\newtheorem{example}[theorem]{Example}
\newtheorem{remark}[theorem]{Remark}
\newtheorem{fact}[theorem]{Fact}
\title{Perfect Codes for Generalized Deletions from Minuscule Elements of Weyl Groups}
\author{Manabu Hagiwara\\Department of Mathematics and Informatics\\Chiba University}
\date{}
\begin{document}

\maketitle

\allowdisplaybreaks[4]
\section{Introduction}
This paper discusses a connection between insertion/deletion (ID) codes and minuscule elements of Weyl groups.

ID codes are a class of error-correcting codes for insertion errors and/or deletion errors in coding theory.
The concept of ID has been proposed in 1960's by Levenshtein {levenshtein1966binary}.
He found that Varshamov and Tenengolts (VT) codes \cite{varshamov1965code} are applicable to correct a single insertion or deletion error.
An insertion error for a sequence is a transformation that inserts certain symbols in the original sequence,
hence it increases the length of the sequence, e.g.) boy $\rightarrow$ buoy.
A deletion error is a sort of an opposite operation against an insertion error.
It deletes certain symbols from the original sequence,
hence it decrease the length.
After this kind of errors has been introduced as a model of synchronization error in communication scenario,
it is applied to DNA analysis \cite{xu2005survey,ossowski2008sequencing,kurtz2001reputer},
race-track memory error-correction \cite{vahid2017correcting,chee2017codes},
natural language processing \cite{och2003minimum,brill2000improved}
and so on.

Minuscule elements are elements of Weyl groups \cite{stembridge2001minuscule}
and appear to representation theory \cite{green2013combinatorics}
and algebraic combinatorics \cite{green2002321}
Reduced expressions of minuscule elements are related to 
fully commutative elements \cite{stembridge1996fully,biagioli2013fully,jouhet2014long},
and generalized Young diagrams,
e.g. 
d-complete posets \cite{proctor1999dynkin},
minuscule posets \cite{proctor1999minuscule},
minuscule heaps \cite{hagiwara2004minuscule} and so on.

ID-codes and minuscule elements have been introduced
and been studied dependently.
Therefore any connection between two topics have not reported yet.
This paper discusses ideas to connect these topics.
Briefly speaking,
this paper is organized as follows:
In \S\ref{sec:BitSeqMinElemTypeB}, \ref{sec:MomentLevLeng}, \ref{sec:inserstionForM}, and \ref{sec:delAndPerfect},
relations between ``ID-codes and (standard) insertion/deletion operations'' and
``minuscule elements and other elements of Weyl groups of type B are explained.
In \S\ref{consTypeA}, \ref{sec:perfectTypeA}, and \ref{sec:BAIBAD},
generalizations of ID-codes and insertion/deletion operations are
proposed by replacing the type B of Weyl groups with the type A.
In \S\ref{sec:SimCard} and \ref{sec:SimInsSphere},
a certain similarity of properties between generalized notions
and standard operations are introduced.

This paper assumed readers' knowledge of Weyl groups,
e.g.) a book \cite{humphreys1992reflection}.
It is preferable to be able to read a paper \cite{stembridge2001minuscule}.
A part of this paper \S\ref{sec:BAIBAD} was presented a symposium ISIT(The IEEE International Symposium on Information Theory) 2017
\cite{hagiwara2017perfect}.

\section{Bit Sequences and Minuscule Elements of Type $B$}\label{sec:BitSeqMinElemTypeB}

A bit sequence is a sequence over a binary set $\{ 0, 1 \}$.
If a bit sequence belongs to $\{ 0, 1 \}^n$ for some non-negative integer $n$,
the sequence is called of length $n$.
In stead of a notation $\mathbf{x} := (x_1, x_2, \dots, x_n) \in \{ 0, 1 \}^n$,
we may allow to denote $\mathbf{x}$ by $x_1 x_2 \dots x_n$ in the paper.
For example, $01001$ is a bit sequence of length $5$.
Throughout this paper, bold letters denote bit-sequences,
i.e., $\mathbf{x}, \mathbf{y}$ and $\mathbf{z}$.

In this section, let us construct bijections between bit sequences,
a set of orbits of a vector by a Weyl group, minuscule elements
and a set of right cosets defined below.
For defining minuscule elements, we start with a definition of root system of type $B$.
For $n \ge 2$, $\Pi ( B_n )$ denotes the following set, as a subset of $\mathbb{R}^n$ with
the standard basis $\langle \epsilon_1, \epsilon_2, \dots, \epsilon_n \rangle$,
$$
\Pi ( B_n ) := \{ \epsilon_1 \} \cup \{ \epsilon_2 - \epsilon_1, \epsilon_3 - \epsilon_2, 
\dots, \epsilon_n - \epsilon_{n-1} \}.
\footnote{
A standard choice of a simple root system of type $B_n$ is
$\{ \epsilon_1 - \epsilon_2, \epsilon_2 - \epsilon_3, 
\dots, \epsilon_{n-1} - \epsilon_{n} \} \cup \{ \epsilon_n \}$.
However, 
the choice $\Pi ( B_n )$ is suitable for connecting
to insertion/deletion codes.
}
$$
The set $\Pi (B_n)$ is known as a simple root system of type $B_n$.
For simplicity, set $\alpha_1 := \epsilon_1$ and $\alpha_i := \epsilon_{i} - \epsilon_{i-1}$ for $2 \le i \le n$.
Hence $\Pi( B_n) = \{ \alpha_1, \alpha_2, \dots, \alpha_{n} \}.$

For $\Pi (B_n)$, the associated root system $\Phi( B_n )$ is
$$
\Phi( B_n ) = 
\{ \pm \epsilon_i \mid 1 \le i \le n \}
\cup
\{ \epsilon_j \pm \epsilon_i, - \epsilon_j \pm \epsilon_i \mid 1 \le i < j \le n \}.
$$
Similarly, the positive system $\Phi( B_n )^+$ is
$$
\Phi( B_n )^+ = 
\{ \epsilon_i \mid 1 \le i \le n \}
\cup
\{ \epsilon_j \pm \epsilon_i \mid 1 \le i < j \le n \}.
$$

A Weyl group $W( B_n )$ of type $B_n$ is defined as a group
generated by reflections $s_i : \mathbb{R}^n \rightarrow \mathbb{R}^n$
associated to $\alpha_i$, i.e.,
$$
s_i (v) := v - 2 \frac{(\alpha_i, v)}{(\alpha_i, \alpha_i)} \alpha_i ,
$$
where $(, )$ is the standard inner product of $\mathbb{R}^n$.
An element $w \in W( B_n )$ is called a $\lambda$-minuscule for $\lambda \in \mathbb{R}^n$,
if
there exists a reduced expression $s_{i_1} s_{i_2} \dots s_{i_r}$ of $w$ such that
$$
s_{i_j} s_{i_{j+1}} \dots s_{i_r} \lambda
=
\lambda
- \alpha_{i_r} - \dots - \alpha_{i_{j+1}} - \alpha_{i_j},
$$
for any $1 \le j \le r$. \cite{stembridge2001minuscule}

\begin{example}\label{constructionOfMinusculeElementsOfTypeB}
Set $\lambda_B := (1/2, 1/2, \dots, 1/2) \in \mathbb{R}^n$.
For a subset $J := \{ j_1, j_2, \dots, j_t \}$ of $\{1, 2, \dots, n \}$,
where $j_1 < j_2 < \dots < j_t$,
let us define an element $w_J \in W(B_n)$ as follows:
$$
w_J := w_{j_t} \dots w_{j_2} w_{j_1},
$$
where
$$ w_{j_i} := s_{j_i } s_{j_i - 1} \dots s_{1} $$
for $1 \le i \le t$.
Note that if $J$ is empty, let us define $w_J$ as the identity element.
Indeed, a reduced expression of $w_J$ is obtained if we replace
$w_{j_i}$ with $s_{j_i } s_{j_i - 1} \dots s_{1} $ from 
$w_J = w_{j_t} \dots w_{j_2} w_{j_1}$. (See \cite{stembridge2001minuscule}.)

Then $w_J$ is a $\lambda_B$-minuscule element.
The $j+1$th entry of a vector $w_J \lambda_B$ is
$- 1/2$ if $j \in J$ and $1/2$ otherwise.
Hence $2^n$ $\lambda_B$-minuscule elements are obtained by the construction above.
\end{example}

Let us denote the set of $\lambda_B$-minuscule elements constructed from Example \ref{constructionOfMinusculeElementsOfTypeB}
by $\mathcal{M}_n$.
It will be explained soon that any $\lambda_B$-minuscule element belongs to $\mathcal{M}_n$.
(See Remark \ref{remark:minusculeAndRep}.)

Let us consider a maximal parabolic subgroup of $W(B_n)$ that is generated by $\{  s_i \mid 1 \le i \le n-1 \}$
and denote the subgroup by $W( A_{n-1} )$.
Then $W(A_{n-1})$ is a Weyl group of type $A_{n-1}$ and hence its cardinality is $n!$.
Therefore the cardinality of $W( B_n ) / W( A_{n-1} )$ is 
$$
\# W( B_n ) / W( A_{n-1} ) = \frac{\# W( B_n ) }{\#  W( A_{n-1} )} = \frac{ 2^n n!}{ n!} = 2^n.
$$

\begin{remark}\label{remark:minusculeAndRep}
Since $W( A_{n-1} )$ is the stabilizer subgroup of $W( B_n )$ for $\lambda_B$,
any $\lambda_B$-minuscule elements is a minimal coset representative of a right coset $W(B_n) / W(A_{n-1})$.
In particular, $\mathcal{M}_n$ is a subset of the set of minimal coset representatives.
By comparing the cardinalities, $\mathcal{M}_n$ is the set of minimal coset representatives.
\end{remark}

We have seen that the following sets are of cardinality $2^n$:
\begin{itemize}
\item $\{ 0, 1\}^n$.
\item $W(B_n) \lambda_B = \{ (\lambda_1, \lambda_2, \dots, \lambda_n) \mid \lambda_i = 1/2 \text{ or } -1/2, 1 \le i \le n \}$.
\item $\mathcal{M}_n$.
\item $W( B_n ) / W( A_{n-1} )$.
\end{itemize}
Remark that the cardinality of the set of subsets of $\{1,2, \dots, n\}$ is also $2^n$.

Here four bijections whose domain is the power set $\mathcal{J}_n$ of a set $\{1,2, \dots, n \}$ are
introduced as follows:
\begin{itemize}
\item $f_{01} : \mathcal{J}_n \to \{ 0, 1 \}^n$,
$$ f_{01} ( J ) := x_1 x_2 \dots x_n,$$
where $x_i := 1$ if $i \in J$ and $x_i := 0$ otherwise.
\item $f_{1/2} : \mathcal{J}_n \to \{ 1/2, -1/2 \}^n$,
$$ f_{1/2} ( J ) := \lambda_1 \lambda_2 \dots \lambda_n,$$
where $\lambda_i := -1/2$ if $i \in J$ and $\lambda_i := 1/2$ otherwise.
\item $f_{M} : \mathcal{J}_n \to \mathcal{M}_n$,
$$ f_{M} ( J ) := w_J,$$
where $w_J$ is defined in Example \ref{constructionOfMinusculeElementsOfTypeB}.
\item $f_{/} : \mathcal{J}_n \to W(B_n) / W(A_{n-1})$,
$$ f_{/} ( J ) := [ w_J ].$$
\end{itemize}

\begin{lemma}\label{lemma:bit_bijections}
\begin{itemize}
\item[(i)] $f_{1/2} \circ f_{01}^{-1} (\mathbf{x}) = \lambda_B - \mathbf{x}.$
\item[(ii)] $f_{M} \circ f_{01}^{-1} (\mathbf{x}) = w_{\mathrm{supp}(\mathbf{x})},$
where $\mathrm{supp}(\mathbf{x}) = \{i \mid x_i \neq 0 \}.$
\item[(iii)] $f_{01} \circ f_{1/2}^{-1} (\lambda) = \lambda_B - \lambda.$
\item[(iv)] $f_{M} \circ f_{1/2}^{-1} (\lambda) = w_{\mathrm{neg}(\lambda)},$
where $\mathrm{neg}(\lambda) = \{i \mid \lambda_i <  0 \}.$
\item[(v)] $f_{/} \circ f_{M}^{-1} (w) = [w]$.
\item[(vi)] $f_{1/2} \circ f_{M}^{-1} (w) = w \lambda_B$.
\item[(vii)] $f_{1/2} \circ f_{/}^{-1} ([w]) = w \lambda_B$.
\item[(viii)] $f_{M} \circ f_{/}^{-1} ([w]) =$ the minimal coset representative of $[w]$.
\item[(ix)] $f_{01} \circ f_{M}^{-1} (w) = \lambda_B - w \lambda_B.$
\end{itemize}
\end{lemma}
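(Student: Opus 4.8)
The plan is to handle all nine identities by the same device. Each left-hand side is a composite $g\circ h^{-1}$ of two of the four bijections $f_{01},f_{1/2},f_M,f_{/}$, so given an argument in the domain of $h$ I would write it as $h(J)$ for the unique $J\in\mathcal{J}_n$ mapping to it and then read off $g(J)$ from the definitions. This first requires that all four maps really are bijections. For $f_{01}$ and $f_{1/2}$ this is clear, with $f_{01}^{-1}(\mathbf{x})=\mathrm{supp}(\mathbf{x})$ and $f_{1/2}^{-1}(\lambda)=\mathrm{neg}(\lambda)$. The map $f_M$ is onto $\mathcal{M}_n$ by the definition of $\mathcal{M}_n$, and injective because, by Example \ref{constructionOfMinusculeElementsOfTypeB}, $w_J\lambda_B$ has $i$th coordinate $-1/2$ when $i\in J$ and $1/2$ otherwise, i.e. $w_J\lambda_B=f_{1/2}(J)$, so distinct $J$ give distinct $w_J$; hence $|\mathcal{M}_n|=2^n$, and by Remark \ref{remark:minusculeAndRep} $\mathcal{M}_n$ is exactly the set of minimal coset representatives, which makes $f_{/}$ (the composite of $f_M$ with the quotient map) a bijection as well.

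With this in place, identities (ii), (iv), (v), (viii) follow directly from the definitions: for (ii) and (iv) one substitutes $f_{01}^{-1}(\mathbf{x})=\mathrm{supp}(\mathbf{x})$, resp. $f_{1/2}^{-1}(\lambda)=\mathrm{neg}(\lambda)$, into $f_M$; for (v), writing $w=w_J$ gives $f_{/}\circ f_M^{-1}(w)=f_{/}(J)=[w_J]=[w]$; and for (viii), $f_M\circ f_{/}^{-1}([w])$ is the unique element of $\mathcal{M}_n$ lying in the coset $[w]$, which by Remark \ref{remark:minusculeAndRep} is the minimal coset representative of $[w]$. Identities (i) and (iii) are one-line coordinate checks: if $J=\mathrm{supp}(\mathbf{x})$ then the $i$th coordinate of $f_{1/2}(J)$ equals $1/2-x_i$, which is (i), and (iii) is the same computation read the other way (formally, (iii) is forced by (i) since $\lambda_B-(\lambda_B-\mathbf{x})=\mathbf{x}$).

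The only identities carrying real content are (vi), (vii), (ix), and all three reduce to the coordinate formula $w_J\lambda_B=f_{1/2}(J)$ recorded in Example \ref{constructionOfMinusculeElementsOfTypeB}. For (vi), writing $w=w_J$, we get $f_{1/2}\circ f_M^{-1}(w)=f_{1/2}(J)=w_J\lambda_B=w\lambda_B$; then (ix) follows by composing (vi) with (i)/(iii), since $f_{01}\circ f_M^{-1}=(f_{01}\circ f_{1/2}^{-1})\circ(f_{1/2}\circ f_M^{-1})$ and $f_{01}\circ f_{1/2}^{-1}$ is subtraction from $\lambda_B$. For (vii), if $[w]=[w_J]$ then $w\in w_J\,W(A_{n-1})$, and since $W(A_{n-1})$ is precisely the stabilizer of $\lambda_B$ in $W(B_n)$ (the fact used in Remark \ref{remark:minusculeAndRep}), $w\lambda_B=w_J\lambda_B=f_{1/2}(J)$, so $f_{1/2}\circ f_{/}^{-1}([w])=w\lambda_B$. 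I do not anticipate a genuine obstacle here: the proof is bookkeeping, and the only points needing care are getting the order of each composite right and cleanly invoking the two inputs that drive everything — the description of $w_J\lambda_B$ from Example \ref{constructionOfMinusculeElementsOfTypeB} and the stabilizer identity $\mathrm{Stab}_{W(B_n)}(\lambda_B)=W(A_{n-1})$.
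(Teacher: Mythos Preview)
Your proposal is correct and follows essentially the same approach as the paper: both reduce every identity to writing the argument as the image of some $J\in\mathcal{J}_n$ and reading off the other bijection, with the only substantive input being the formula $w_J\lambda_B=f_{1/2}(J)$ from Example~\ref{constructionOfMinusculeElementsOfTypeB} (driving (vi), (vii), (ix)) and the stabilizer identity $\mathrm{Stab}_{W(B_n)}(\lambda_B)=W(A_{n-1})$ (driving (vii)). Your organization is slightly more explicit about why the four maps are bijections, but the proofs of the nine items themselves are the same.
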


\begin{proof}
(i) follows from $1/2 - 0 = 1/2$ and $1/2 - 1 = -1/2$.
Similarly (iii) follows from $1/2 - 1/2 = 0$ and $1/2 - (-1/2) = 1$.

For (ii), $f_{M} \circ f_{01}^{-1} ( \mathbf{x} ) = f_M (\mathrm{supp}( \mathbf{x} )) = w_{\mathrm{supp}( \mathbf{x}) }.$
Similarly, for (iii),
$f_{M} \circ f_{1/2}^{-1} ( \lambda ) = f_M (\mathrm{neg}( \lambda ) = w_{\mathrm{neg}( \lambda) }.$

For (v),
remember that there uniquely exists $J \in \mathcal{J}$ such that $w = w_J$.
Hence $f_{/} \circ f_{M}^{-1} (w) = f_{/} \circ f_{M}^{-1} (w_J) = f_{/} (J) = [w_J] = [w].$

For (vi),
we have $f_{1/2} \circ f_{M}^{-1} (w) = f_{1/2} \circ f_M ^{-1} (w_J) = f_{1/2}(J)$.
On the other hand,
set $\lambda := w_J \lambda_B$.
By the definition of $w_J$, $\lambda_i = -1/2$ if $i \in J$ and $\lambda_i = 1/2$ otherwise.
It implies $f_{1/2} (J) = \lambda$ and hence $f_{1/2} \circ f_{M}^{-1} (w) = w_J \lambda_B.$

For (vii), remember that there uniquely exist $w_J \in \mathcal{M}_n$ and $v \in W(A_{n-1})$ such that
$w = w_J v$.
Then $f_{1/2} \circ f_{/}^{-1} ([w])
= f_{1/2} \circ f_{/}^{-1} ([w_J])
= f_{1/2} ( J  )
= f_{1/2} \circ f_{M}^{-1} ( w_J ).$
On the other hand,
$w \lambda_B = w_J v \lambda_B = w_J \lambda_B$,
since $W(A_{n-1})$ is the stabilizer group for $\lambda_B$.
By (vii), $f_{1/2} \circ f_{M}^{-1} ( w_J ) = w_J \lambda_B$.
Hence $f_{1/2} \circ f_{/}^{-1} ([w]) = w \lambda_B$.

For (viii),
let $w_J$ be the minimal coset representative of $[w]$.
Since $[w] = [w_J]$,
$f_{M} \circ f_{/}^{-1} ( [w] )
= f_{M} \circ f_{/}^{-1} ( [w_J] )
= f_{M} ( J )
= w_J.$

For (ix),
$f_{01} \circ f_{M}^{-1} (w)
= (f_{01} \circ f_{1/2}^{-1}) \circ (f_{1/2} \circ f_{M}^{-1}) (w)
= (f_{01} \circ f_{1/2}^{-1}) (w \lambda_B)
 = \lambda_B - w \lambda_B.$
\end{proof}

\section{A Moment Function of Levenshtein Codes and a Length Function}\label{sec:MomentLevLeng}
\begin{definition}[Levenshtein Codes $\mathbb{L}_{n, a}$ and a Moment Function $\rho$]
For a positive integer $n$ and an integer $a$,
a set $\mathbb{L}_{n, a}$ is defined as
$$
\mathbb{L}_{n, a} := \{ \mathbf{x} \in \{ 0, 1 \}^n \mid \rho( \mathbf{x} ) \equiv a \pmod{n+1} \},
$$
where 
$\rho( x_1, x_2, \dots, x_n) := x_1 + 2 x_2 + \dots + n x_n.$
$\mathbb{L}_{n, a}$ is called a Levenshtein code \cite{levenshtein1966binary}
and $\rho$ a moment function.
\end{definition}

\begin{remark}
In coding theory, $\mathbb{L}_{n, a}$ is invented by Varshamov and Tenengolts
for correcting asymmetric error, that is a noise over a Z-channel \cite{varshamov1965code}.
Hence this code is called a VT code in some literatures \cite{bibak2018weight}
or a Varshamov code \cite{stanley1972study}.
However the code has not been known that it is available to correct a single insertion/deletion.
The first person who discovered insertion/deletion error-correction property is Levenshtein.
Therefore the code is called a Levenshtein code in this paper
since the main research interests are relations between insertion/deletion and Weyl groups.
\end{remark}

\begin{definition}[Length Function $l$]
For $w \in W(B_n)$,
$l(w)$ is defined as
$$
l(w) := \min \{ r \mid w = s_{i1} s_{i2} \dots s_{ir} \}.
$$
The function $l$ is called a length function.
\end{definition}

\begin{theorem}\label{theorem:relationMomentAndLength}
For any $w \in \mathcal{M}_n$,
$$
l(w) = \rho( \mathbf{x}_w ),
$$
where $\mathbf{x}_w := \lambda_B - w \lambda_B (= f_{01} \circ f_M ^{-1} (w))$,
and $l$ is the length function associated with $s_0, s_1, \dots, s_{n-1}$.
In other words, for any $\mathbf{x} \in \{0, 1 \}^n$, 
$$
\rho ( \mathbf{x} ) = l ( w_{ \mathrm{supp}( \mathbf{x} ) } ),
$$
where $\mathrm{supp}( \mathbf{x} ) = \{ i \mid x_i = 1 \} (= f_{M} \circ f_{01} ^{-1} (\mathbf{x}))$.
\end{theorem}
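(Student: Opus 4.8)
The plan is to reduce the theorem to a comparison of two elementary closed formulas — one for $\rho(\mathbf{x}_w)$ and one for $l(w)$ — each expressed in terms of the subset $J \subseteq \{1,2,\dots,n\}$ that determines $w$. First I would write $w = w_J$ with $J = f_M^{-1}(w) = \{j_1 < j_2 < \dots < j_t\}$; this is unambiguous because $f_M \colon \mathcal{J}_n \to \mathcal{M}_n$ is a bijection by Remark \ref{remark:minusculeAndRep}. By Lemma \ref{lemma:bit_bijections}(ix), $\mathbf{x}_w = \lambda_B - w\lambda_B = f_{01}(J)$, so by the definition of $f_{01}$ the $i$th bit of $\mathbf{x}_w$ equals $1$ exactly when $i \in J$. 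Substituting this into the definition of $\rho$ gives directly
$$
\rho(\mathbf{x}_w) = \sum_{i=1}^{n} i\,(\mathbf{x}_w)_i = \sum_{i \in J} i = j_1 + j_2 + \dots + j_t .
$$

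Next I would evaluate $l(w)$ from the reduced expression of $w_J$ supplied in Example \ref{constructionOfMinusculeElementsOfTypeB}, namely the concatenation of the blocks $w_{j_i} = s_{j_i} s_{j_i-1}\cdots s_1$ taken for $i = t, t-1, \dots, 1$. The $i$th block uses exactly $j_i$ simple reflections, so the whole word has length $\sum_{i=1}^{t} j_i$; and since this word is reduced, $l(w) = j_1 + \dots + j_t$. Comparing the two displays yields $l(w) = \rho(\mathbf{x}_w)$, which is the first assertion. The ``in other words'' reformulation is then obtained just by transporting this identity through the bijection $f_M \circ f_{01}^{-1}$: for $\mathbf{x} \in \{0,1\}^n$ set $J := \mathrm{supp}(\mathbf{x})$, so that $w_{\mathrm{supp}(\mathbf{x})} = w_J$ and $\mathbf{x} = f_{01}(J)$, and the computation above gives $l(w_{\mathrm{supp}(\mathbf{x})}) = \sum_{i\in J} i = \rho(\mathbf{x})$.

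I expect essentially no obstacle beyond one imported fact. The whole argument is bookkeeping with the bijections of Lemma \ref{lemma:bit_bijections} and the definition of $\rho$, except for the claim that the block word $w_{j_t}\cdots w_{j_1}$ is genuinely \emph{reduced}; without reducedness one would only obtain $l(w) \le \rho(\mathbf{x}_w)$, so this is the crux. I would handle it exactly as the excerpt does when it first introduces $w_J$, by citing \cite{stembridge2001minuscule}. No induction, case analysis, or manipulation of root data is required — in particular one never needs to compute $w_J\lambda_B$ coordinate by coordinate, since Lemma \ref{lemma:bit_bijections}(ix) already packages that information.
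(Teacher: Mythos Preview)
Your argument is correct, and the only nontrivial input---that the block word for $w_J$ is reduced---is indeed already asserted in Example~\ref{constructionOfMinusculeElementsOfTypeB} with the citation to \cite{stembridge2001minuscule}, so nothing is missing.

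However, the paper proves the theorem by a genuinely different mechanism. Rather than counting letters in the explicit reduced word, it uses the $\lambda_B$-minuscule property directly: for any reduced expression $s_{i_1}\cdots s_{i_r}$ of $w$ one has $\lambda_B - w\lambda_B = \alpha_{i_1}+\cdots+\alpha_{i_r}$, and pairing this with the half-sum of positive coroots $\sum_{\alpha\in\Phi^+}\alpha^\vee = \epsilon_1+2\epsilon_2+\cdots+n\epsilon_n$ gives $r$ on one side (since $(\sum\alpha^\vee,\alpha_i)=1$ for every simple root) and $\rho(\mathbf{x}_w)$ on the other. Your route is more elementary and self-contained---it needs only the combinatorial description of $w_J$ and the bijections of Lemma~\ref{lemma:bit_bijections}---whereas the paper's route explains \emph{why} the moment function $\rho$ arises: it is exactly the inner product with $\sum\alpha^\vee$, a type-independent object. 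The paper's argument would transfer to other minuscule settings without knowing an explicit reduced word; yours would require one.
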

\begin{proof}
Let $s_{i_1} s_{i_2} \dots s_{i_r }$ be a reduced expression of $w$.
\begin{align*}
l(w) &= r \\
     &= ( \sum_{\alpha \in \Phi^+ }\alpha^{\vee} , \alpha_{i_1} + \alpha_{i_2} + \dots + \alpha_{i_r} )
        & \left( \sum_{\alpha \in \Phi^+ }\alpha^{\vee} , \alpha_i ) = 1 \right) \\
     &= ( \sum_{\alpha \in \Phi^+ }\alpha^{\vee} , \lambda_{B} - w \lambda_B )
        & \left( w \lambda_B = \lambda_B - \alpha_{i_1} - \alpha_{i_2} - \dots - \alpha_{i_r} \right) \\
     &= ( \epsilon_1 + 2 \epsilon_2 + \dots + n \epsilon_n , \lambda_B - w \lambda_B )
        & \left( \sum_{\alpha \in \Phi^+ }\alpha^{\vee} = \epsilon_1 + 2 \epsilon_2 + \dots + n \epsilon_n \right) \\
     &= ( \epsilon_1 + 2 \epsilon_2 + \dots + n \epsilon_n , \mathbf{x}_w )\\
     &= x_1 + 2 x_{2} + \dots + n x_{n}
        & \left( \text{by setting } x_1 x_2 \dots x_n := \mathbf{x} \right) \\
     &= \rho( \mathbf{x}_w ) 
\end{align*}
\end{proof}

Theorem \ref{theorem:relationMomentAndLength} enables us to define Levenshtein codes
in terms of Weyl group:
$$
\mathcal{M}_{n, a} := \{ w \in \mathcal{M}_n \mid l(w ) \equiv a \pmod{n+1} \}
$$
Therefore, we obtain corollaries of statements which are described by binary sequences and the moment function.

\begin{example}
Let us observe a generator polynomial of $\rho$, equivalently $l$, from both sides of
bit sequences and minuscule elements.

From the side of bit sequences,
we can directly calculate:
\begin{align*}
\sum_{x \in \{0, 1\}^n} X^{\rho(x) }
 &=
  \sum_{x_1 \in \{0, 1\} }X^{x_1} 
  \sum_{x_2 \in \{0, 1\} } X^{2 x_2}
  \dots
  \sum_{x_n \in \{0, 1\} } X^{n x_n}\\
 &=
  (1+X)(1+X^2) \dots (1+X^n)\\
 &= \prod_{1 \le i \le n} (1+X^i).
\end{align*}
On the other hand, from the side of minuscule elements,
we have
\begin{align*}
\sum_{w \in \mathcal{M}_n} X^{l (w) }
 &=
\sum_{w \in W(B_n)/W(A_{n-1}) : \text{ minimal coset rep.}} X^{l (w) }\\
 &=
 \frac{
  \sum_{w \in W(B_n)} X^{l (w) }
 }{
  \sum_{w \in W(A_{n-1}) } X^{l (w) } 
 }\\
  &= \frac{
    \prod_{1 \le i \le n} [2i]
     }{
    \prod_{2 \le i \le n} [i]
     }\\
  &=
    \prod_{1 \le i \le n}
     \frac{
      1 - X^{2i}
     }{
      1 - X^i
     }\\
  &=
    \prod_{1 \le i \le n}
      (1 + X^{i}).
\end{align*}
\end{example}

\section{Insertions and Insertion Spheres}\label{sec:inserstionForM}
For a binary sequence $\mathbf{x} := x_1 x_2 \dots x_n$,
an operation $I_{i, b}^{(n)}$ that maps $\mathbf{x}$ to
$x_1 x_2 \dots x_i b x_{i+1} \dots x_n$
for some $0 \le i \le n$ and $b \in \{0, 1\}$
is called an insertion.
For a case $i=0$ (resp. $i=n$), the insertion maps $\mathbf{x}$
to $ b \mathbf{x}$ (resp. $\mathbf{x} b$).
Hence an insertion increases the length of a sequence.

An insertion sphere $\mathrm{iS}(\mathbf{x})$ for $\mathbf{x}$ is
a set of binary sequences defined as
$$
\mathrm{iS}( \mathbf{x} )
 := \{ I_{i, b}^{(n)} (\mathbf{x} ) \mid 0 \le i \le n, b \in \{0, 1\} \}.
$$
In other words, $\mathrm{iS}( \mathbf{x} )$ is the set of sequences
which are obtained by all (single) insertions to $\mathbf{x}$.

\begin{example}
Insertion spheres $\mathrm{iS}( 000 )$ and $\mathrm{iS}( 010 )$ are
\begin{align*}
\mathrm{iS}( 000 ) &= \{ 0000, 1000, 0100, 0010, 0001 \},\\
\mathrm{iS}( 010 ) &= \{ 0010, 0100, 1010, 0110, 0101 \}.
\end{align*}
\end{example}
While there are $2 (n+1)$ insertions,
the cardinality of $\mathrm{iS}( 000 )$ is not $8 (=2 \times 4)$ but $5$.
It follows from $I_{0, 0}^{(3)}(000) = I_{1, 0}^{(3)}(000) = I_{2, 0}^{(3)}(000)
 = I_{3, 0}^{(3)}(000) = I_{4, 0}^{(3)}(000) = 0000$.

By allowing multiplicity, an insertion sphere $\mathrm{iS}( \mathbf{x} )$ consists of
the following $2n + 2$ sequences:
\begin{align*}
 I_{n,   0}^{(n)} (\mathbf{x})= & x_1 x_2 \dots x_n 0 ,\\
 I_{n-1, 0}^{(n)} (\mathbf{x})= & x_1 x_2 \dots 0 x_n ,\\ 
             & \vdots \\
 I_{1,   0}^{(n)} (\mathbf{x})= & x_1 0 x_2 \dots x_n ,\\ 
 I_{0,   0}^{(n)} (\mathbf{x})= & 0 x_1 x_2 \dots x_n ,\\ 
 I_{0,   1}^{(n)} (\mathbf{x})= & 1 x_1 x_2 \dots x_n ,\\ 
 I_{1,   1}^{(n)} (\mathbf{x})= & x_1 1 x_2 \dots x_n ,\\ 
             & \vdots \\
 I_{n-1, 1}^{(n)} (\mathbf{x})= & x_1 x_2 \dots 1 x_n ,\\ 
 I_{n,   1}^{(n)} (\mathbf{x})= & x_1 x_2 \dots x_n 1.
\end{align*}

From here, let us define a set of operations to define insertions and an insertion sphere 
in terms of Weyl groups of type $B_n$.
Recall that $\lambda_B = (1/2, 1/2, \dots, 1/2) \in \mathbb{R}^n$ and
$\mathcal{M}_n$ is the set of $\lambda_B$-minuscule elements in $W(B_n)$.
The orbit $W( B_n ) \lambda_B$ is equal to a set $\{ 1/2, -1/2 \}^n$
 and hence the cardinality is $2^n$.
Let us introduce a map $\iota$ that maps a real vector $v$ to $v (1/2)$.
The map is regarded as an embedding map from $W( B_n ) \lambda_B $ to $W( B_{n+1} ) \lambda_B$:
$\lambda_1 \lambda_2 \dots \lambda_n \mapsto \lambda_1 \lambda_2 \dots \lambda_n 1/2$.

For $0 \le j \le 2n+1$, we denote the following element in $W(B_{n+1})$ by
$\mathcal{I}_j^{(n)}$:
\begin{align*}
\mathcal{I}_0^{(n)} &:= \mathrm{id},\\
\mathcal{I}_j^{(n)} &:= s_{n+2 -j} \dots s_{n} s_{n+1} & \text{for } 1 \le j \le n,\\
\mathcal{I}_j^{(n)} &:= s_{j - n} \dots s_{2} s_{1} \mathcal{I}_n^{(n)} & \text{for } n+1 \le j \le 2n+1.
\end{align*}
In other words, 
$\mathcal{I}_{j}^{(n)}$ is the consecutive right subword of 
$$s_{n} s_{n-1} \dots s_{1} s_{0} s_{1} \dots s_{n-1} s_n$$
of length $j$.

By combining $\iota$ and $\mathcal{I}^{(n)}_j$, we obtain
\begin{align*}
 \mathcal{I}_0^{(n)} \circ \iota (\lambda)= & \lambda_1 \lambda_2 \dots \lambda_n (1/2),\\
 \mathcal{I}_1^{(n)} \circ \iota (\lambda)= & \lambda_1 \lambda_2 \dots (1/2) \lambda_n ,\\ 
             & \vdots \\
 \mathcal{I}_{n-1}^{(n)} \circ \iota (\lambda)= & \lambda_1 (1/2) \lambda_2 \dots \lambda_n ,\\ 
 \mathcal{I}_n^{(n)} \circ \iota (\lambda)= & (1/2) \lambda_1 \lambda_2 \dots \lambda_n ,\\ 
 \mathcal{I}_{n+1}^{(n)} \circ \iota (\lambda)= & (-1/2) \lambda_1 \lambda_2 \dots \lambda_n ,\\ 
 \mathcal{I}_{n+2}^{(n)} \circ \iota (\lambda)= & \lambda_1 (-1/2) \lambda_2 \dots \lambda_n ,\\ 
             & \vdots \\
 \mathcal{I}_{2n}^{(n)} \circ \iota (\lambda)= & \lambda_1 \lambda_2 \dots (-1/2) \lambda_n ,\\ 
 \mathcal{I}_{2n+1}^{(n)} \circ \iota (\lambda)= & \lambda_1 \lambda_2 \dots \lambda_n (-1/2).
\end{align*}
Remember that we have seen a bijection $f_{01} \circ f_{1/2}^{-1}$ from $\{ 1/2, -1/2 \}^n$ to $\{0, 1\}^n$.
The observation above implies the following statement:
\begin{lemma}\label{lemma:com_bit_lambda}
For any $0 \le i \le n$ and $b \in \{0, 1\}$,
$$
I_{i, b}^{(n)} \circ (f_{01} \circ f_{1/2}^{-1})
 = (f_{01} \circ f_{1/2}^{-1}) \circ (\mathcal{I}_{i'}^{(n)} \circ \iota),
$$
where $i' = n + b + (-1)^{b+1} i$.
\end{lemma}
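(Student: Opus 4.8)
The plan is to reduce the identity to the explicit table for $\mathcal{I}_j^{(n)}\circ\iota$ displayed just above, together with the elementary description of $f_{01}\circ f_{1/2}^{-1}$ as the coordinatewise substitution $\psi\colon 1/2\mapsto 0,\ -1/2\mapsto 1$ (valid on sequences of any length; for length $n$ this is Lemma~\ref{lemma:bit_bijections}(iii), since $f_{01}\circ f_{1/2}^{-1}(\lambda)=\lambda_B-\lambda$). The one structural remark needed is that such an entrywise substitution intertwines positional insertion: inserting a symbol $c$ at slot $i$ (i.e.\ sending $\lambda_1\cdots\lambda_n$ to $\lambda_1\cdots\lambda_i\,c\,\lambda_{i+1}\cdots\lambda_n$) and then applying $\psi$ to every coordinate yields the same length-$(n+1)$ bit sequence as applying $\psi$ coordinatewise and then inserting $\psi(c)$ at slot $i$. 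Granting this, the lemma becomes the assertion that $\mathcal{I}_{i'}^{(n)}\circ\iota$ is precisely ``insert the symbol $1/2$ when $b=0$, or $-1/2$ when $b=1$, at slot $i$''.

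To establish that, I would first record the action of the generators on $\mathbb{R}^{n+1}$: from $\alpha_1=\epsilon_1$ the reflection $s_1$ negates the first coordinate, and from $\alpha_k=\epsilon_k-\epsilon_{k-1}$ the reflection $s_k$ transposes coordinates $k-1$ and $k$ for $2\le k\le n+1$. A short induction on the length of the word then reproduces the displayed table: for $1\le j\le n$ the factor $s_{n+1}$ moves the appended $1/2$ into the slot before $\lambda_n$ and each further factor $s_n,s_{n-1},\dots$ shifts it one slot leftward, so $\mathcal{I}_j^{(n)}\circ\iota(\lambda)=(\lambda_1,\dots,\lambda_{n-j},1/2,\lambda_{n-j+1},\dots,\lambda_n)$; and for $n+1\le j\le 2n+1$ one starts from $\mathcal{I}_n^{(n)}\circ\iota(\lambda)=(1/2,\lambda_1,\dots,\lambda_n)$, applies $s_1$ to flip the leading entry to $-1/2$, and then shifts it rightward with $s_2,s_3,\dots$, so $\mathcal{I}_j^{(n)}\circ\iota(\lambda)=(\lambda_1,\dots,\lambda_{j-n-1},-1/2,\lambda_{j-n},\dots,\lambda_n)$. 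The boundary values $j=0$ (the identity, $1/2$ appended at the end) and $j=n+1$ (only $s_1$, $-1/2$ prepended) obey the same two formulas. In both cases the inserted symbol lands in slot $i$ with $i=n-j$ in the first range and $i=j-n-1$ in the second.

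It then remains to match the index formula $i'=n+b+(-1)^{b+1}i$ with these two ranges. For $b=0$ it gives $i'=n-i$, which runs through $\{0,1,\dots,n\}$ as $i$ runs through $\{0,\dots,n\}$, and setting $j=i'$ in the first formula of the previous paragraph puts $1/2$ in slot $n-i'=i$; for $b=1$ it gives $i'=n+1+i$, which runs through $\{n+1,\dots,2n+1\}$, and setting $j=i'$ in the second formula puts $-1/2$ in slot $i'-n-1=i$. Applying $f_{01}\circ f_{1/2}^{-1}$ and invoking the intertwining remark of the first paragraph, the right-hand side of the claimed identity becomes $I_{i,b}^{(n)}$ applied to $f_{01}\circ f_{1/2}^{-1}(\lambda)$, which is the left-hand side.

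I do not anticipate a genuine difficulty: the content is almost entirely a repackaging of the displayed table, and the only thing that needs care is the bookkeeping around $i'=n+b+(-1)^{b+1}i$ — in particular checking that the ``left half'' $0\le i'\le n$ of the word corresponds to $b=0$ and the ``right half'' $n+1\le i'\le 2n+1$ to $b=1$, and verifying the endpoint slots $i=0$ and $i=n$ against the boundary values $j=0,n,n+1,2n+1$.
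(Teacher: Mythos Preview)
Your proposal is correct and is essentially the approach the paper takes: the paper does not give a separate proof of this lemma but simply states that it follows from the displayed table of the actions $\mathcal{I}_j^{(n)}\circ\iota$, and your argument is a careful unpacking of exactly that table together with the entrywise description of $f_{01}\circ f_{1/2}^{-1}$ and the bookkeeping for the index $i'=n+b+(-1)^{b+1}i$.
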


A natural action of $W(B_n)$ on $W(B_n) / W(A_{n-1})$
is defined as
$$
v [w] := [v w],
$$
for $v \in W(B_{n})$ and $[w] \in W(B_n) / W(A_{n-1})$.
The action inspires us to introduce an action of $W(B_n)$ on $\mathcal{M}_n$
as
$$ v w := \text{ the minimal coset representative of $[v w]$.}$$
Note that this action makes a map $f_{M} \circ f_{/}^{-1}$ homomorphic, i.e.,
$$ v (f_{M} \circ f_{/}^{-1}) ([w]) = (f_{M} \circ f_{/}^{-1}) ([v w]).$$

\begin{remark}
For positive integers $n$ and $m$ with $n \le m$,
$\Pi (B_{n})$ can be regarded as a subset of $\Pi (B_{m})$.
Hence $W(B_n)$ is regarded as a subgroup of $W( B_m )$.
Similarly, $\mathcal{M}_{n}$ is regarded as a subset of $\mathcal{M}_m$
since
a $\lambda_{B}$-minuscule element $w \in \mathcal{M}_{n}$ 
is also $\iota^{m-n} (\lambda_{B})$-minuscule.
\end{remark}

\begin{theorem}\label{theorem:main1}
For any $0 \le i \le n$ and any $b \in \{0,1\}$,
$$
\mathcal{I}_{i'}^{(n)} = (f_{01} \circ f_{M}^{-1})^{-1} \circ I_{i, b}^{(n)} \circ (f_{01} \circ f_{M}^{-1}),
$$
where $i' = n + b + (-1)^{b+1} i$.
\end{theorem}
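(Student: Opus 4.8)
The plan is to transport the bit-sequence identity of Lemma~\ref{lemma:com_bit_lambda} across the dictionary of Lemma~\ref{lemma:bit_bijections}; the only genuinely new ingredient is the compatibility of the ``minimal coset representative'' action of $W(B_{n+1})$ on $\mathcal{M}_{n+1}$ with the ordinary linear action on $\lambda_B$. Unwinding the definitions, the asserted equality of maps $\mathcal{M}_n \to \mathcal{M}_{n+1}$ is equivalent to
$$
(f_{01}\circ f_M^{-1})\bigl(\mathcal{I}_{i'}^{(n)} w\bigr) = I_{i,b}^{(n)}\Bigl((f_{01}\circ f_M^{-1})(w)\Bigr) \qquad (w\in\mathcal{M}_n),
$$
where $\mathcal{I}_{i'}^{(n)} w$ denotes the action of $\mathcal{I}_{i'}^{(n)}\in W(B_{n+1})$ on $w\in\mathcal{M}_n\subseteq\mathcal{M}_{n+1}$, i.e.\ the minimal coset representative of $[\mathcal{I}_{i'}^{(n)} w]$. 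So I would fix $w$, evaluate both sides, and then apply the bijection $(f_{01}\circ f_M^{-1})^{-1}\colon\{0,1\}^{n+1}\to\mathcal{M}_{n+1}$ (a composite of bijections, so legitimate) to conclude.

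First I would rewrite the left-hand side using Lemma~\ref{lemma:bit_bijections}(ix) in dimension $n+1$, which gives $f_{01}\circ f_M^{-1}(u)=\lambda_B-u\lambda_B$. Here the point to verify is that passing to the minimal coset representative does not change the value on $\lambda_B$: if $v$ denotes the minimal coset representative of $[\mathcal{I}_{i'}^{(n)} w]$, then $v=\mathcal{I}_{i'}^{(n)} w u$ for some $u\in W(A_n)$, and since $W(A_n)$ is the stabilizer of $\lambda_B\in\mathbb{R}^{n+1}$ (Remark~\ref{remark:minusculeAndRep}, one dimension up), $v\lambda_B=\mathcal{I}_{i'}^{(n)}(w\lambda_B)$. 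Next, since $w\in W(B_n)$ is a product of reflections that only involve $\epsilon_1,\dots,\epsilon_n$, regarding $w$ inside $W(B_{n+1})$ it fixes $\epsilon_{n+1}$; hence $w\lambda_B=\iota(w\lambda_B)$, with $\lambda_B\in\mathbb{R}^{n+1}$ on the left and $\lambda_B\in\mathbb{R}^{n}$ on the right (this is exactly the embedding $\mathcal{M}_n\hookrightarrow\mathcal{M}_{n+1}$ recorded just before the theorem). Combining, the left-hand side becomes $\lambda_B - \mathcal{I}_{i'}^{(n)}\bigl(\iota(w\lambda_B)\bigr)$, with the outer $\lambda_B$ in $\mathbb{R}^{n+1}$ and the inner one in $\mathbb{R}^{n}$.

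Then I would identify this with the right-hand side via Lemma~\ref{lemma:com_bit_lambda}. Put $\mu:=w\lambda_B\in\{1/2,-1/2\}^n$, which by Lemma~\ref{lemma:bit_bijections}(vi) equals $(f_{1/2}\circ f_M^{-1})(w)$; by parts (iii) and (ix) of the same lemma, $(f_{01}\circ f_{1/2}^{-1})(\mu)=\lambda_B-\mu=\lambda_B-w\lambda_B=(f_{01}\circ f_M^{-1})(w)$. Hence Lemma~\ref{lemma:com_bit_lambda}, applied to $\mu$, reads
$$
I_{i,b}^{(n)}\Bigl((f_{01}\circ f_M^{-1})(w)\Bigr) = (f_{01}\circ f_{1/2}^{-1})\bigl(\mathcal{I}_{i'}^{(n)}\iota(\mu)\bigr) = \lambda_B - \mathcal{I}_{i'}^{(n)}\bigl(\iota(w\lambda_B)\bigr),
$$
which is precisely the expression obtained for the left-hand side. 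Applying $(f_{01}\circ f_M^{-1})^{-1}$ to both sides yields $\mathcal{I}_{i'}^{(n)} w = (f_{01}\circ f_M^{-1})^{-1}\circ I_{i,b}^{(n)}\circ(f_{01}\circ f_M^{-1})(w)$ for every $w\in\mathcal{M}_n$, as desired; note $\mathcal{I}_{i'}^{(n)} w\in\mathcal{M}_{n+1}$ by construction, so the inverse bijection applies.

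I expect the main obstacle to be bookkeeping rather than ideas: keeping the two copies of $W(B)$ and of $\lambda_B$ (dimensions $n$ and $n+1$) separate, and trusting that the index shift $i\mapsto i'=n+b+(-1)^{b+1}i$ is already absorbed into Lemma~\ref{lemma:com_bit_lambda}. The one substantive verification is that the ``minimal coset representative'' action coincides with the linear action after pairing against $\lambda_B$, combined with the fact that $w$, seen in $W(B_{n+1})$, fixes the last coordinate; everything else is a formal chase through the bijections of Lemma~\ref{lemma:bit_bijections}.
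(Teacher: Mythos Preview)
Your proof is correct and follows essentially the same route as the paper: both arguments reduce to Lemma~\ref{lemma:com_bit_lambda} by factoring $f_{01}\circ f_M^{-1}$ through $f_{1/2}$ via Lemma~\ref{lemma:bit_bijections}, and both hinge on the observation that $w\in W(B_n)\subset W(B_{n+1})$ fixes the last coordinate so that $\iota(w\lambda_B^{(n)})=w\lambda_B^{(n+1)}$. You are slightly more explicit than the paper about why passing to the minimal coset representative is harmless (the stabilizer argument), which the paper silently absorbs into its invocation of Lemma~\ref{lemma:bit_bijections}(vi); otherwise the two proofs are the same computation arranged in different orders.
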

\begin{proof}
The statement is equivalent to 
$$
I_{i, b}^{(n)} \circ (f_{01} \circ f_{M}^{-1})
= 
(f_{01} \circ f_{M}^{-1}) \circ \mathcal{I}_{i'}^{(n)}.
$$
For clarifying the argument below,
the length $n$ of $\lambda_{B}$ is described
as $\lambda_{B}^{(n)}$ here.

Let $w$ be an element of $\mathcal{M}_n$
and set $\lambda := w \lambda_{B}^{(n)}.$
\begin{align*}
& I_{i, b}^{(n)} \circ (f_{01} \circ f_{M}^{-1}) (w)
 & \\
&= I_{i, b}^{(n)} \circ (f_{01} \circ f_{1/2}^{-1}) \circ (f_{1/2} \circ f_{M}^{-1}) (w)
 & \\
&= I_{i, b}^{(n)} \circ (f_{01} \circ f_{1/2}^{-1}) (\lambda)
 & \text{ (Lemma \ref{lemma:bit_bijections}) } \\
&= (f_{01} \circ f_{1/2}^{-1}) \circ (\mathcal{I}_{i'}^{(n)} \circ \iota) (\lambda)
 & \text{ (Lemma \ref{lemma:com_bit_lambda}) }\\
&= (f_{01} \circ f_{1/2}^{-1}) \circ (\mathcal{I}_{i'}^{(n)} \circ \iota) (w \lambda_{B}^{(n)})
 & \text{(Def. of $\lambda$)}\\
&= (f_{01} \circ f_{1/2}^{-1}) \circ \mathcal{I}_{i'}^{(n)} \circ w (\iota \lambda_{B}^{(n)})
 & \text{($w \in W(B_n)$)}\\
&= (f_{01} \circ f_{1/2}^{-1}) \circ \mathcal{I}_{i'}^{(n)} \circ w (\lambda_{B}^{(n+1)})
 & \text{(Def. of $\iota$)}\\
&= (f_{01} \circ f_{1/2}^{-1}) \circ (\mathcal{I}_{i'}^{(n)} w) (\lambda_{B}^{(n+1)})
 & \\
&= (f_{01} \circ f_{1/2}^{-1}) \circ (f_{1/2} \circ f_{M}^{-1}) (\mathcal{I}_{i'}^{(n)} w)
 & \text{(Lemma \ref{lemma:bit_bijections})}\\
&= (f_{01} \circ f_{M}^{-1}) (\mathcal{I}_{i'}^{(n)} w)
 & \\
&= (f_{01} \circ f_{M}^{-1}) \circ \mathcal{I}_{i'}^{(n)} (w).
 &
\end{align*}
\end{proof}

Theorem \ref{theorem:main1} encourages us to call $\mathcal{I}_i^{(n)}$
an insertion for $\mathcal{M}_n$ (to $\mathcal{M}_{n+1}$).
Similarly we define an insertion sphere for $w \in \mathcal{M}_n$ as
$$
\mathrm{iS}(w)
 :=
   \{ \mathcal{I}_j^{(n)} w \in \mathcal{M}_{n+1} \mid 0 \le j \le 2n+1 \}.
$$

\section{Deletions and Perfect Codes}\label{sec:delAndPerfect}
Opposite operations to insertions are deletions.
For a binary sequence $\mathbf{x} := x_1 x_2 \dots x_n$,
an operation $D_{i}$ that maps $\mathbf{x}$ to
$x_1 x_2 \dots x_{i-1} x_{i+1} \dots x_n$
for $1 \le i \le n$
is called a deletion.
Hence a deletion decreases the length of a sequence.

A deletion sphere $\mathrm{dS}(\mathbf{x})$ for $\mathbf{x}$ is
a set of binary sequences defined as
$$
\mathrm{dS}( \mathbf{x} )
 := \{ D_{i} (\mathbf{x} ) \mid 1 \le i \le n \}.
$$
In other words, $\mathrm{dS}( \mathbf{x} )$ is the set of sequences
which is obtained by all (single) deletions to $\mathbf{x}$.

\begin{example}
Deletion spheres $\mathrm{dS}( 0000 )$ and $\mathrm{dS}( 0101 )$ are
\begin{align*}
\mathrm{dS}( 0000 ) &= \{ 000 \},\\
\mathrm{dS}( 0101 ) &= \{ 101, 001, 011, 010 \}.
\end{align*}
\end{example}

The next lemma shows that
a deletion sphere can be defined by using an insertion sphere.
\begin{lemma}\label{lemma:dSandiS}
For any positive integer $n$ and any binary sequence $\mathbf{x} \in \{0,1\}^n$,
$$
\mathrm{dS}(\mathbf{x})
 =
\{ \mathbf{y} \in \{0,1 \}^{n-1} \mid \mathbf{x} \in \mathrm{iS}(\mathbf{y}) \}.
$$
\end{lemma}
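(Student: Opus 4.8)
The plan is to prove the set equality by double inclusion, exploiting the elementary combinatorial fact that a single deletion and a single insertion are inverse-like operations. The key observation is: for $\mathbf{x} \in \{0,1\}^n$ and $\mathbf{y} \in \{0,1\}^{n-1}$, we have $\mathbf{y} \in \mathrm{dS}(\mathbf{x})$ if and only if $\mathbf{x} \in \mathrm{iS}(\mathbf{y})$. Both conditions say exactly that $\mathbf{x}$ and $\mathbf{y}$ differ by one symbol in the sense that $\mathbf{x}$ can be obtained from $\mathbf{y}$ by inserting one bit somewhere (equivalently, $\mathbf{y}$ is obtained from $\mathbf{x}$ by deleting one bit somewhere). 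So the real content is just unwinding the definitions of $D_i$ and $I_{i,b}^{(n)}$.

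First I would prove the inclusion $\mathrm{dS}(\mathbf{x}) \subseteq \{ \mathbf{y} \in \{0,1\}^{n-1} \mid \mathbf{x} \in \mathrm{iS}(\mathbf{y}) \}$. Take $\mathbf{y} = D_i(\mathbf{x}) = x_1 \dots x_{i-1} x_{i+1} \dots x_n$ for some $1 \le i \le n$. Then $\mathbf{y} \in \{0,1\}^{n-1}$, and I claim $\mathbf{x} \in \mathrm{iS}(\mathbf{y})$: indeed, applying the insertion $I_{i-1, x_i}^{(n-1)}$ to $\mathbf{y}$ inserts the bit $x_i$ between position $i-1$ and position $i$ of $\mathbf{y}$, i.e. between $x_{i-1}$ and $x_{i+1}$, recovering $x_1 \dots x_{i-1} x_i x_{i+1} \dots x_n = \mathbf{x}$. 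Since $0 \le i-1 \le n-1$ and $x_i \in \{0,1\}$, this is a legitimate insertion on a length-$(n-1)$ sequence, so $\mathbf{x} \in \mathrm{iS}(\mathbf{y})$.

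Conversely, for $\{ \mathbf{y} \in \{0,1\}^{n-1} \mid \mathbf{x} \in \mathrm{iS}(\mathbf{y}) \} \subseteq \mathrm{dS}(\mathbf{x})$, suppose $\mathbf{y} \in \{0,1\}^{n-1}$ with $\mathbf{x} \in \mathrm{iS}(\mathbf{y})$. Then $\mathbf{x} = I_{j,b}^{(n-1)}(\mathbf{y})$ for some $0 \le j \le n-1$ and $b \in \{0,1\}$, so writing $\mathbf{y} = y_1 \dots y_{n-1}$ we have $\mathbf{x} = y_1 \dots y_j\, b\, y_{j+1} \dots y_{n-1}$. Thus $\mathbf{x}$ has $x_{j+1} = b$ and deleting the $(j+1)$st symbol gives $D_{j+1}(\mathbf{x}) = y_1 \dots y_j y_{j+1} \dots y_{n-1} = \mathbf{y}$; since $1 \le j+1 \le n$, this shows $\mathbf{y} \in \mathrm{dS}(\mathbf{x})$. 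Combining the two inclusions gives the claimed equality.

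There is no serious obstacle here; the only thing requiring minor care is bookkeeping of indices — matching the insertion parameter $j$ with the deletion parameter $i$ (the shift by one arising because $I_{i,b}^{(n)}$ is indexed by the number of symbols preceding the inserted bit, whereas $D_i$ is indexed by the position of the deleted bit). An alternative, slightly slicker route would be to phrase everything via the single-element characterization "$\mathbf{x}$ and $\mathbf{y}$ are related by one insertion/deletion" and note the symmetry is built into that phrasing, but the direct index computation above is already short enough that I would just present it.
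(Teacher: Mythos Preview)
Your proof is correct and is essentially identical to the paper's own argument: both prove the equality by double inclusion, using $I_{i-1,x_i}^{(n-1)} \circ D_i(\mathbf{x}) = \mathbf{x}$ for one direction and $D_{j+1} \circ I_{j,b}^{(n-1)}(\mathbf{y}) = \mathbf{y}$ for the other. The index bookkeeping you flag (the shift by one between insertion and deletion parameters) is exactly the only nontrivial point, and you handle it the same way the paper does.
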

\begin{proof}
Let us set $\mathbf{x} = x_1 x_2 \dots x_n$.

($\mathrm{dS}(\mathbf{x}) \subset \text{R.H.S.}$) : 
For any $\mathbf{y} \in \mathrm{dS}(\mathbf{x})$,
there exists $i$ such that $\mathbf{y} = D_i ( \mathbf{x} )$.
In other words,
$\mathbf{y} = x_1 x_2 \dots x_{i-1} x_{i+1} \dots x_n$.
Then $\mathbf{x} = I_{i-1, x_{i} } ( \mathbf{y} )$.
In other words,
$\mathbf{x} \in \mathrm{iS}( \mathbf{y} )$.
Hence $\mathbf{y} \in \text{R.H.S.}$

($\mathrm{dS}(\mathbf{x}) \supset \text{R.H.S.}$) :
The assumption $\mathbf{x} \in \mathrm{iS}( \mathbf{y} )$
implies that 
there exists $i$ and $b$ such that $I_{i, b}( \mathbf{y} ) = \mathbf{x}$.
In other words,
$y_1 y_2 \dots y_i b y_{i+1} \dots y_{n-1}
=
x_1 x_2 \dots x_i x_{i+1} x_{i+2} \dots x_{n}$.
Then $D_{i+1}( \mathbf{x} ) = \mathbf{y}$.
Hence $\mathbf{y} \in \mathrm{dS}( \mathbf{x} )$.
\end{proof}

\begin{definition}[Perfect Codes for Deletions]
A set $C \subset \{0, 1\}^n$ is called a perfect code
(for deletions) if 
\begin{itemize}
\item for any different $\mathbf{c} , \mathbf{c}' \in C$, $\mathrm{dS}(\mathbf{c}) \cap \mathrm{dS}(\mathbf{c}') = \emptyset$,
\item $\bigcup_{\mathbf{c} \in C} \mathrm{dS}( \mathbf{c} ) = \{0, 1\}^{n-1}$.
\end{itemize}
In other words, $\{ \mathrm{dS}( \mathbf{c} ) \mid \mathbf{c} \in C \}$ is a partition
of $\{0, 1\}^{n-1}$.
\end{definition}

The following is proven by Levenshtein.
\begin{theorem}[Theorem 2.1 in \cite{levenshtein1992perfect}]\label{thm:perfectBinaryCodes}
For any positive integer $n$ and any integer $a$,
$\mathbb{L}_{n, a}$ is a perfect code for deletions.
\end{theorem}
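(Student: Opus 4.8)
The plan is to prove that the Levenshtein code $\mathbb{L}_{n,a}$ partitions $\{0,1\}^{n-1}$ into deletion spheres, using the moment function $\rho$ as the key invariant. The essential structural fact is the following: if $\mathbf{x} \in \{0,1\}^n$ and $\mathbf{y} = D_i(\mathbf{x}) \in \{0,1\}^{n-1}$ is obtained by deleting the $i$-th coordinate, then $\rho(\mathbf{y})$ and $\rho(\mathbf{x})$ are related in a controlled way. First I would record this arithmetic lemma: writing $\mathbf{x} = x_1 \cdots x_n$ and $\mathbf{y} = x_1 \cdots x_{i-1} x_{i+1} \cdots x_n$, the coordinates past position $i$ are each shifted down by one index, so
\[
\rho(\mathbf{x}) - \rho(\mathbf{y}) = i\,x_i + \sum_{j>i} x_j .
\]
In particular, letting $s(\mathbf{x}) := \sum_{j=1}^n x_j$ be the Hamming weight, the value $\rho(\mathbf{x}) - \rho(\mathbf{y})$ always lies between $\sum_{j>i} x_j$ (achieved when $x_i=0$) and $i + \sum_{j>i} x_j$; as $i$ ranges over $1,\dots,n$ one sees these differences run through a full interval, and this is the lever that forces the $\bmod\ (n+1)$ residue to be determined.

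The argument then splits into the two partition conditions. For \textbf{disjointness}: suppose $\mathbf{c}, \mathbf{c}' \in \mathbb{L}_{n,a}$ and $\mathbf{y} \in \mathrm{dS}(\mathbf{c}) \cap \mathrm{dS}(\mathbf{c}')$. By Lemma~\ref{lemma:dSandiS}, $\mathbf{c}, \mathbf{c}' \in \mathrm{iS}(\mathbf{y})$, so both are single insertions into the same $\mathbf{y}$ of length $n-1$. One classifies $\mathrm{iS}(\mathbf{y})$ by the residue of $\rho$ modulo $n+1$: inserting a bit $b$ at position $k$ into $\mathbf{y}$ changes $\rho$ by an amount of the form $b\,(k+1) + (\text{sum of the tail of } \mathbf{y} \text{ past } k)$, and one shows that distinct elements of $\mathrm{iS}(\mathbf{y})$ that are genuinely different sequences have distinct residues mod $n+1$ — equivalently, each residue class mod $n+1$ is hit by exactly one element of $\mathrm{iS}(\mathbf{y})$. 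Since $\mathbf{c}$ and $\mathbf{c}'$ have the same residue $a$, they must coincide. This is essentially Levenshtein's single-deletion-correcting property, so I would either cite it or carry out the residue bookkeeping directly.

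For \textbf{covering}: given any $\mathbf{y} \in \{0,1\}^{n-1}$, I must produce $\mathbf{c} \in \mathbb{L}_{n,a}$ with $\mathbf{y} \in \mathrm{dS}(\mathbf{c})$, i.e. $\mathbf{c} \in \mathrm{iS}(\mathbf{y})$ with $\rho(\mathbf{c}) \equiv a$. Here I use the counting from the previous paragraph in reverse: the map $\mathrm{iS}(\mathbf{y}) \to \mathbb{Z}/(n+1)$, $\mathbf{c} \mapsto \rho(\mathbf{c}) \bmod (n+1)$, is a bijection — injectivity was just shown, and both sets have $n+1$ elements (the insertion sphere of a length-$(n-1)$ sequence has cardinality $n+1$, since among the $2n$ nominal insertions the runs collapse to leave exactly $n+1$ distinct words; one can verify this by the run-length count, or invoke it from the earlier discussion of insertion-sphere cardinality). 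Hence some $\mathbf{c} \in \mathrm{iS}(\mathbf{y})$ hits $a$, giving $\mathbf{y} \in \mathrm{dS}(\mathbf{c})$ with $\mathbf{c} \in \mathbb{L}_{n,a}$. Combining disjointness and covering yields that $\{\mathrm{dS}(\mathbf{c}) : \mathbf{c} \in \mathbb{L}_{n,a}\}$ partitions $\{0,1\}^{n-1}$, which is the assertion.

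The main obstacle is the injectivity-of-residues claim on $\mathrm{iS}(\mathbf{y})$, i.e. that two different insertions producing different words always differ in $\rho$ modulo $n+1$. The subtlety is bookkeeping the tail-sums: inserting $b$ at position $k$ versus $b'$ at position $k'$ must be compared carefully, and one has to handle the degeneracies where different $(k,b)$ pairs give the \emph{same} word (these occur exactly inside a constant run) so that the claim is about distinct words, not distinct index pairs. The cleanest route is to fix $\mathbf{y}$, list the (at most $n+1$) distinct elements of $\mathrm{iS}(\mathbf{y})$ explicitly — e.g. as $\mathbf{y}$ with a single symbol inserted at each "boundary between maximal runs" — compute $\rho$ of each, and check the values are pairwise incongruent mod $n+1$; the range estimate $0 \le \rho(\mathbf{c}) - \rho(\mathbf{y}') \le n$ for a suitable reference point controls this. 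Since this is exactly the content of Levenshtein's classical result \cite{levenshtein1966binary}, it is also legitimate to cite it and keep the present proof short, reducing Theorem~\ref{thm:perfectBinaryCodes} to the cardinality count $|\mathrm{iS}(\mathbf{y})| = n+1$ plus the known single-deletion-correcting property of $\mathbb{L}_{n,a}$.
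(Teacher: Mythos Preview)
Your strategy is the same as the paper's: both argue that for any $\mathbf{y}\in\{0,1\}^{n-1}$ the map $\rho:\mathrm{iS}(\mathbf{y})\to\mathbb{Z}/(n+1)$ is a bijection, which simultaneously gives disjointness and covering. The difference is in how the ``main obstacle'' you flag---injectivity of $\rho$ on distinct inserted words---is dispatched. The paper does not do case-by-case bookkeeping on pairs of insertions, nor does it invoke the cardinality $|\mathrm{iS}(\mathbf{y})|=n+1$ separately. Instead it linearly orders all $2n$ insertion operations via $\psi^{-1}(i,b)=n+b+(-1)^{b+1}i$, i.e.\ the sequence $I_{n-1,0},\dots,I_{0,0},I_{0,1},\dots,I_{n-1,1}$, and observes that along this sequence $\rho$ either stays fixed (exactly when two consecutive operations yield the same word) or increases by~$1$, with total increase $\rho(\mathbf{y}1)-\rho(\mathbf{y}0)=n$. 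Thus $\{\rho(w):w\in\mathrm{iS}(\mathbf{y})\}$ is the interval $\{\rho(\mathbf{y}),\dots,\rho(\mathbf{y})+n\}$, and injectivity, surjectivity mod $n+1$, and the sphere cardinality all fall out at once. This monotone-ordering trick is exactly the clean substitute for the ``residue bookkeeping'' you were bracing for.
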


A sketch of a proof different from \cite{levenshtein1992perfect}
is given here.
\begin{proof}
Let $\psi$ be the inverse of a function from $\{ 0,1,\dots,n-1 \} \times \{0, 1\}$
to $\{0, 1, \dots, 2n-1 \}$: $(i, b) \mapsto n + b + (-1)^{b+1} i$.
Then for any $\mathbf{x} \in \{ 0,1 \}^{n-1}$, it is easy to see
$$
\rho( I^{(n-1)}_{\psi( j + 1)} ( \mathbf{x} ) )=
\begin{cases}
\rho( I^{(n-1)}_{\psi( j )} ( \mathbf{x} ) ) & \text{ if } I^{(n-1)}_{\psi( j + 1)} ( \mathbf{x} ) = I^{(n-1)}_{\psi( j )} ( \mathbf{x} ) \\
\rho( I^{(n-1)}_{\psi( j )} ( \mathbf{x} ) ) + 1 & \text{ otherwise }
\end{cases}
$$
and
$$
\rho( I^{(n-1)}_{\psi( 2n - 1)} ( \mathbf{x} ) ) - \rho( I^{(n-1)}_{\psi( 0 )} ( \mathbf{x} ) )
=
\rho(  \mathbf{x} 1 ) - \rho(  \mathbf{x} 0 ) = n.
$$
Hence $\{  \rho( I^{(n-1)}_{\psi( j )} ( \mathbf{x} ) ) \mid 0 \le j \le 2n - 1 \}$
is a set of consecutive integers $\rho(  \mathbf{x}), \rho(  \mathbf{x}) + 1 , \dots ,\rho(  \mathbf{x}) + n$.
In other words, for any $a$, there uniquely exists $\mathbf{y} \in \mathrm{iS}( \mathbf{x} )$
such that $\mathbf{y} \in \mathbb{L}_{n, a}$.
\end{proof}

Lemma \ref{lemma:dSandiS} enables us to rewrite the definition of
perfect codes by using $\mathrm{iS}()$ instead of $\mathrm{dS}()$:
$$ \mathrm{dS}( w ) := \{ y \in M_{n-1} \mid w \in \mathrm{iS}( y ) \}.$$
Remember that insertion spheres $\mathrm{iS}()$ are defined in terms of 
Weyl groups.
Hence perfect codes are defined in terms of Weyl groups.
\begin{definition}[Perfect Codes]
A set $C \subset \mathcal{M}_n$ is called a perfect code
if 
\begin{itemize}
\item for any different $w , w' \in C$, $\mathrm{dS}(w) \cap \mathrm{dS}(w') = \emptyset$,
\item $\bigcup_{w \in C} \mathrm{dS}( w ) = \mathcal{M}_{n-1}$.
\end{itemize}
In other words, $\{ \mathrm{dS}( w ) \mid w \in C \}$ is a partition
of $\mathcal{M}_{n-1}$.
\end{definition}

\begin{corollary}
For any positive $n$ and any integer $a$,
$\mathcal{M}_{n, a}$ is a perfect code.
\end{corollary}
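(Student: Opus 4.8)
The plan is to deduce this corollary directly from Theorem \ref{thm:perfectBinaryCodes} (Levenshtein's theorem) by transporting the partition statement across the bijection $f_{01} \circ f_M^{-1} \colon \mathcal{M}_n \to \{0,1\}^n$, which I will abbreviate $\Phi_n$. The key observation is that $\Phi_n$ is compatible with the insertion spheres on both sides: Theorem \ref{theorem:main1} gives $\mathcal{I}_{i'}^{(n)} = \Phi_{n+1}^{-1} \circ I_{i,b}^{(n)} \circ \Phi_n$, so applying $\Phi_{n-1}^{-1}$ and $\Phi_n$ appropriately, one gets $\Phi_n(\mathrm{iS}(w)) = \mathrm{iS}(\Phi_{n-1}(w))$ for every $w \in \mathcal{M}_{n-1}$; that is, $\Phi$ intertwines the combinatorial insertion spheres with the Weyl-group insertion spheres.

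First I would make the deletion-sphere compatibility explicit. Using the reformulation $\mathrm{dS}(w) = \{ y \in \mathcal{M}_{n-1} \mid w \in \mathrm{iS}(y)\}$ together with $\mathrm{iS}(\Phi_{n-1}(y)) = \Phi_n(\mathrm{iS}(y))$, one checks that $w \in \mathrm{iS}(y) \iff \Phi_n(w) \in \mathrm{iS}(\Phi_{n-1}(y))$, hence
$$
\Phi_{n-1}(\mathrm{dS}(w)) = \mathrm{dS}(\Phi_n(w))
$$
for all $w \in \mathcal{M}_n$, where the right-hand side is the binary-sequence deletion sphere of Lemma \ref{lemma:dSandiS}. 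Next I would identify the code: by definition $\mathcal{M}_{n,a} = \{ w \in \mathcal{M}_n \mid l(w) \equiv a \pmod{n+1}\}$, and by Theorem \ref{theorem:relationMomentAndLength} we have $l(w) = \rho(\Phi_n(w))$, so $\Phi_n(\mathcal{M}_{n,a}) = \mathbb{L}_{n,a}$ exactly. Thus $\Phi_n$ carries $\mathcal{M}_{n,a}$ onto $\mathbb{L}_{n,a}$ and carries each deletion sphere $\mathrm{dS}(w)$ onto the corresponding binary deletion sphere, and since $\Phi_{n-1}$ is a bijection $\mathcal{M}_{n-1} \to \{0,1\}^{n-1}$ it preserves both disjointness and the covering condition.

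Putting these together: by Theorem \ref{thm:perfectBinaryCodes}, $\{ \mathrm{dS}(\mathbf{c}) \mid \mathbf{c} \in \mathbb{L}_{n,a}\}$ is a partition of $\{0,1\}^{n-1}$. Pulling this partition back through the bijection $\Phi_{n-1}$ and using $\Phi_{n-1}(\mathrm{dS}(w)) = \mathrm{dS}(\Phi_n(w))$ with $\Phi_n$ restricting to a bijection $\mathcal{M}_{n,a} \to \mathbb{L}_{n,a}$, we conclude that $\{ \mathrm{dS}(w) \mid w \in \mathcal{M}_{n,a}\}$ is a partition of $\mathcal{M}_{n-1}$, which is precisely the assertion that $\mathcal{M}_{n,a}$ is a perfect code.

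The only genuinely non-routine step is establishing the insertion-sphere compatibility $\Phi_n(\mathrm{iS}(w)) = \mathrm{iS}(\Phi_{n-1}(w))$ cleanly, and in particular checking that the index substitution $i' = n + b + (-1)^{b+1} i$ from Theorem \ref{theorem:main1} is a bijection between the parametrizing sets $\{0,\dots,n\}\times\{0,1\}$ and $\{0,1,\dots,2n+1\}$, so that ranging over all insertions on one side matches ranging over all insertions on the other (this is the same map $\psi$ already introduced in the proof of Theorem \ref{thm:perfectBinaryCodes}). Everything downstream of that is a formal transport-of-structure argument through the bijections of Lemma \ref{lemma:bit_bijections} and Theorem \ref{theorem:relationMomentAndLength}, with no further obstacles.
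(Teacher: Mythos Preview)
Your proposal is correct and is precisely the argument the paper has in mind: the corollary is stated without proof because all the ingredients (the bijection $f_{01}\circ f_M^{-1}$, the insertion compatibility of Theorem~\ref{theorem:main1}, the identity $l(w)=\rho(\mathbf{x}_w)$ of Theorem~\ref{theorem:relationMomentAndLength}, and Levenshtein's Theorem~\ref{thm:perfectBinaryCodes}) have already been assembled, and the transport-of-structure you spell out is exactly the intended deduction. Your check that $(i,b)\mapsto i'=n+b+(-1)^{b+1}i$ is a bijection $\{0,\dots,n\}\times\{0,1\}\to\{0,\dots,2n+1\}$ is the one detail the paper leaves implicit, and it goes through as you indicate.
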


\section{Generalizations for Insertions and Deletions}\label{sec:genInsDel}
Based on the previous sections,
a generalization for insertions is proposed as follows:
\begin{definition}[Generalized Insertions]\label{def:generalizedInsertions}
Let $X$ and $Y$ be finite sets,
$f$ a function on $Y$,
and 
$I_j$ maps from $X$ to $Y$ for $j = 0, 1, \dots, r$ for some $r$.

$I_j$ ($0 \le j \le r$) are called generalized insertions if
\begin{itemize}
\item[(I1)] for any $x \in X$ and for any $0 \le j < r$,
$$
f \circ I_{j+1} (x) = f \circ I_j (x) + 1 \text{ if } I_{j+1} (x) \neq I_j (x),
$$
\item[(I2)] there exists an integer $S$ such that for any $x \in X$,
$$
S = f \circ I_r (x) - f \circ I_0 (x).
$$
\end{itemize}

In this section, $\mathbf{I}$ denotes the set of generalized insertions,
i.e.
$$ \mathbf{I} = \{ I_j \mid 0 \le j \le r \}.$$
\end{definition}

\begin{example}\label{example:originalInsertion}
The original insertions on bit sequences
satisfy the conditions in Definition \ref{def:generalizedInsertions}
by setting
$ X := \{ 0, 1 \}^n,     $
$ Y := \{ 0, 1 \}^{n+1},$
$ f     := \rho,            $
$ I_j   := I_{ \psi(j) },   $
and
$ S     := n+1.$

Conditions (I1) and (I2) are mentioned in the sketch of a proof for perfectness
(see the Proof of Theorem \ref{thm:perfectBinaryCodes}).
\end{example}

\begin{example}
By Theorem \ref{theorem:main1}, Example \ref{example:originalInsertion}
and setting
$ X := \mathcal{M}_n,     $
$ Y := \mathcal{M}_{n+1},$
$ f     := l,            $
$ I_j   := \mathcal{I}_j,   $
and
$ S     := n+1,$
$\mathcal{I}_j$ are generalized insertions.
\end{example}

A generalization of deletions is also proposed here.
\begin{definition}\label{def:generalizedDeletions}
Under the same notation in Definition \ref{def:generalizedInsertions},
let $\mathbf{D}$ be a set of partial maps from $ Y $ to $ X $.
If the set $\mathbf{D}$  satisfies the following two conditions, elements
of $\mathbf{D}$ are called generalized deletions.
\begin{itemize}
\item[(D1)]:
For any $D \in \mathbf{D}$ and any $y \in Y$, there exists $I \in \mathbf{I}$
such that
$$I \circ D (y) = y.$$

\item[(D2)]:
For any $I \in \mathbf{I}$ and any $x \in X$,
there exists $D \in \mathbf{D}$ such that
$$ D \circ I (x) = x.$$
\end{itemize}
\end{definition}

\begin{lemma}\label{lemma:relInsAndDel}
Let $x \in X$ and $y \in Y$.
The following are equivalent.
\begin{itemize}
\item[(1)] $x = D (y)$ for some $D \in \mathbf{D}$.
\item[(2)] $y = I (x)$ for some $I \in \mathbf{I}$.
\end{itemize}
\end{lemma}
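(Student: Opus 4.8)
The plan is to prove the two implications separately, each time unwinding the definitions of $\mathbf{D}$ and $\mathbf{I}$ and using exactly one of the axioms (D1) or (D2). The statement is essentially a bookkeeping lemma that says ``$y$ lies in the insertion sphere of $x$'' and ``$x$ lies in the deletion sphere of $y$'' are the same relation, generalizing Lemma \ref{lemma:dSandiS}; the axioms (D1), (D2) were designed precisely to make this go through.

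For the direction $(1)\Rightarrow(2)$, suppose $x = D(y)$ for some $D \in \mathbf{D}$. Applying axiom (D1) to this $D$ and to the element $y \in Y$, we obtain some $I \in \mathbf{I}$ with $I \circ D(y) = y$. Substituting $x = D(y)$ into $I(D(y)) = y$ gives $I(x) = y$, which is exactly statement (2). For the direction $(2)\Rightarrow(1)$, suppose $y = I(x)$ for some $I \in \mathbf{I}$. Applying axiom (D2) to this $I$ and to the element $x \in X$, we obtain some $D \in \mathbf{D}$ with $D \circ I(x) = x$. Substituting $y = I(x)$ into $D(I(x)) = x$ gives $D(y) = x$, which is statement (1). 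This completes both implications.

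The only point deserving care — and the closest thing to an obstacle — is making sure that the $D$ in (D1) and the $I$ in (D2) are applied to legitimate arguments, i.e.\ that $D(y)$ is defined (recall $\mathbf{D}$ consists of \emph{partial} maps from $Y$ to $X$). In the direction $(1)\Rightarrow(2)$ this is automatic, since the hypothesis ``$x = D(y)$ for some $D$'' already presupposes that $D$ is defined at $y$. In the direction $(2)\Rightarrow(1)$, axiom (D2) directly asserts the existence of $D$ with $D \circ I(x) = x$, so $D$ is defined at $I(x) = y$ by fiat. Hence no definedness issue actually arises, and the substitutions above are valid. I expect the proof to be three or four lines in each direction, with no reliance on (I1) or (I2) — those conditions on $f$ are only needed later, for the perfectness arguments, not for this equivalence.
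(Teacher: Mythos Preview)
Your proof is correct and follows essentially the same approach as the paper's own proof: each implication is handled by a direct application of (D1) or (D2) followed by the obvious substitution. Your additional remark about definedness of the partial maps is a welcome clarification but not something the paper addresses explicitly.
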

\begin{proof}
( (1) $\Rightarrow$ (2) ) :
Assume $x = D (y)$.
By (D1),
there exists an insertion $I$ such that
$I \circ D (y) = y$.
Therefore
$$ y = I \circ D (y) = I (x).$$

( (2) $\Rightarrow$ (1) ) :
Assume $y = I(x)$.
By (D2),
there exists a deletion $D$ such that
$D \circ I (x) = x$.
Therefore
$$ x = D \circ I (x) = D (y).$$
\end{proof}

\begin{definition}[Insertion Sphere and Deletion Sphere]
Let us define an insertion sphere $\mathrm{iS}()$ and a deletion sphere $\mathrm{dS}()$  as follows:
$$
\mathrm{iS}(x) := \{ I (x) \in Y \mid I \in \mathbf{I} \},
$$
$$
\mathrm{dS}(y) := \{ w \in X \mid y \in \mathrm{iS}(w) \},
$$
where $x \in X$ and $y \in Y$.
\end{definition}

Remark that Lemma \ref{lemma:relInsAndDel} implies
$$
\mathrm{dS}( y ) = \{ D( y ) \mid D \in \mathbf{D}, \text{ if $D(y)$ is defined.} \}.
$$

Similar to the argument in the proof for Theorem \ref{thm:perfectBinaryCodes},
the following holds.
\begin{lemma}\label{lemma:imf}
1) For any $x \in X$, 
$\{ f(y) \mid y \in \mathrm{iS}( x ) \}$ is a set of
consecutive integers $f \circ I_0 (x), f  \circ I_0 (x) + 1, \dots, f  \circ I_0 (x)  + S$.

2) For any $x \in X$, $\# \mathrm{iS}( x ) = S + 1$.

3) For any $x \in X$, a restriction $f_{ | \mathrm{iS}(x) }$ of $f$ to $\mathrm{iS}(x)$ is injective.
\end{lemma}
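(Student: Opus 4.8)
The plan is to deduce all three parts from the defining conditions (I1) and (I2) of generalized insertions, mimicking the moment-function argument used in the proof of Theorem~\ref{thm:perfectBinaryCodes}. Fix $x \in X$. First I would analyze the finite sequence of values $f \circ I_0(x), f \circ I_1(x), \dots, f \circ I_r(x)$. By (I1), consecutive terms are either equal (when $I_{j+1}(x) = I_j(x)$) or differ by exactly $+1$ (when $I_{j+1}(x) \neq I_j(x)$); in particular the sequence is non-decreasing and increases in unit steps. By (I2) the total increase is $f \circ I_r(x) - f \circ I_0(x) = S$. Hence the set of values $\{ f \circ I_j(x) \mid 0 \le j \le r \}$ is exactly the block of consecutive integers $f \circ I_0(x), f \circ I_0(x)+1, \dots, f \circ I_0(x)+S$, which is part~1), since $\mathrm{iS}(x) = \{ I_j(x) \mid 0 \le j \le r \}$ and so $\{ f(y) \mid y \in \mathrm{iS}(x)\}$ is precisely this value set.

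For part~3), I would argue that $f$ restricted to $\mathrm{iS}(x)$ is injective. Suppose $y, y' \in \mathrm{iS}(x)$ with $f(y) = f(y')$; write $y = I_j(x)$ and $y' = I_k(x)$ with $j \le k$. Walking from index $j$ to index $k$, each step either leaves the element unchanged or strictly increases $f$ by $1$ (by (I1)); since $f \circ I_j(x) = f \circ I_k(x)$, no step in between can be a strict increase, so every intermediate step is an equality, forcing $I_j(x) = I_{j+1}(x) = \cdots = I_k(x)$, i.e.\ $y = y'$. This shows the map $j \mapsto f \circ I_j(x)$ has the property that equal values come only from a constant run, so $f$ is injective on $\mathrm{iS}(x)$.

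Finally, part~2) follows by combining 1) and 3): the restriction $f_{|\mathrm{iS}(x)}$ is an injection from $\mathrm{iS}(x)$ onto the set of $S+1$ consecutive integers described in part~1), hence $\#\mathrm{iS}(x) = S+1$. I do not expect a genuine obstacle here; the only point requiring a little care is the bookkeeping in part~3) — making precise that between two indices with the same $f$-value the elements $I_j(x)$ must all coincide, which is exactly the contrapositive of the unit-step condition (I1). The argument is otherwise a direct unwinding of the definitions, entirely parallel to the computation $\rho(I^{(n-1)}_{\psi(j+1)}(\mathbf{x})) \in \{\rho(I^{(n-1)}_{\psi(j)}(\mathbf{x})), \rho(I^{(n-1)}_{\psi(j)}(\mathbf{x}))+1\}$ appearing in the proof of Theorem~\ref{thm:perfectBinaryCodes}.
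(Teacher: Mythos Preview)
Your proposal is correct and follows essentially the same approach as the paper, which does not give a separate proof for this lemma but simply notes that it holds by the same argument as in the proof of Theorem~\ref{thm:perfectBinaryCodes}. Your careful treatment of part~3) via the contrapositive of (I1) makes explicit what the paper leaves implicit.
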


For any integer $a$, let us set
$$
C_a := \{ y \in Y \mid f(y) \equiv a \pmod{ S + 1 } \}.
$$

\begin{lemma}\label{generalizedPerfectCodes}
For any integer $a$,
the following set $C_a$ is a perfect code for generalized deletions.
\end{lemma}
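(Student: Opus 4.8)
The plan is to verify the two defining properties of a perfect code for generalized deletions: that the deletion spheres $\mathrm{dS}(c)$ for $c \in C_a$ are pairwise disjoint, and that their union is all of $X$. By the remark following Lemma~\ref{lemma:relInsAndDel}, for $y \in Y$ we have $y \in \mathrm{dS}(x)^{-1}$-style membership, i.e.\ $x \in \mathrm{dS}(y)$ iff $y \in \mathrm{iS}(x)$; I will phrase everything through $\mathrm{iS}()$. So I first reformulate the goal: $C_a \subset Y$ is a perfect code exactly when for every $x \in X$ there is a \emph{unique} $y \in \mathrm{iS}(x)$ with $y \in C_a$, that is, with $f(y) \equiv a \pmod{S+1}$.

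The key input is Lemma~\ref{lemma:imf}. Part~1 says $\{ f(y) \mid y \in \mathrm{iS}(x) \}$ is the set of $S+1$ consecutive integers $f\circ I_0(x), f\circ I_0(x)+1, \dots, f\circ I_0(x)+S$. Among any $S+1$ consecutive integers, exactly one is congruent to $a$ modulo $S+1$. Combined with part~3 of Lemma~\ref{lemma:imf} — that $f$ restricted to $\mathrm{iS}(x)$ is injective — this gives that there is exactly one $y \in \mathrm{iS}(x)$ with $f(y) \equiv a \pmod{S+1}$. Translating back: $x \in \mathrm{dS}(y)$ for exactly one $y \in C_a$. This simultaneously establishes that $\bigcup_{c \in C_a} \mathrm{dS}(c) = X$ (existence) and that the spheres are pairwise disjoint (uniqueness): if $x \in \mathrm{dS}(c) \cap \mathrm{dS}(c')$ with $c, c' \in C_a$, then $c, c' \in \mathrm{iS}(x) \cap C_a$, forcing $c = c'$.

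I expect no serious obstacle here, since Lemma~\ref{lemma:imf} has already been stated and does all the analytic work; the proof is essentially the pigeonhole observation that a window of $S+1$ consecutive integers meets each residue class mod $S+1$ exactly once, plus the injectivity of $f$ on each insertion sphere to transport that count from $f$-values back to the sphere itself. The one point requiring a line of care is making sure $S+1 > 0$ so that ``$S+1$ consecutive integers'' genuinely hit every residue class — this is immediate since $S \ge 0$ (indeed $\mathrm{iS}(x)$ is nonempty, containing $I_0(x)$). If anything is delicate, it is only the bookkeeping of the equivalence ``$x \in \mathrm{dS}(y) \iff y \in \mathrm{iS}(x)$,'' which is exactly the definition of $\mathrm{dS}$ together with Lemma~\ref{lemma:relInsAndDel}, so it can be cited rather than reproven.
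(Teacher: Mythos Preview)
Your proposal is correct and follows essentially the same argument as the paper: both reduce to Lemma~\ref{lemma:imf} to see that $f$ restricted to $\mathrm{iS}(x)$ is an injection onto a block of $S+1$ consecutive integers, hence hits the residue class $a \pmod{S+1}$ exactly once, and then translate via the definition $x \in \mathrm{dS}(y) \iff y \in \mathrm{iS}(x)$. The only cosmetic difference is that the paper separates the disjointness and covering verifications into two paragraphs, whereas you package them as a single existence-and-uniqueness statement.
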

\begin{proof}
First we show
$\mathrm{dS}(c ) \cap \mathrm{dS}( c' ) \neq \emptyset$ implies
$c = c'$ where $c, c' \in C_a$.
Assume there exists $x \in \mathrm{dS}(c ) \cap \mathrm{dS}( c' )$.
By the definition of deletion sphere, $c, c' \mathrm{iS}( x )$.
In other words, there exists insertions $I, I'$ such that
$c = I(x)$ and $c' = I' (x)$.

As is mentioned in Lemma \ref{lemma:imf},
$\{ f(w) \mid w \in \mathrm{iS}(x) \}$ is a set
of consecutive $S+1$ integers
and that has a unique element that is equivalent to $a$ module $S+1$.
Therefore $f(c) = f(c')$.
By the injectivity of $f_{| \mathrm{iS}(x)}$,
$c = c'$ holds.

Second we show $X = \cup_{c \in C_a} \mathrm{dS}( c )$.
As is shown, for any $x \in X$ there exists $c \in C_a$ such that
$c \in \mathrm{iS}(x)$.
By the definition of a deletion sphere, it is equivalent to
$x \in \mathrm{dS}(c)$.
Hence for any $x \in X$, $x \in \cup_{c \in C_a} \mathrm{dS}(c)$.
Namely $X \subset \cup_{c \in C_a} \mathrm{dS}(c)$.
Therefore, $X = \cup_{c \in C_a} \mathrm{dS}(c)$.
\end{proof}
\section{Constant Hamming Weight Sequences and Minuscule Elements of Type $A$}\label{consTypeA}
For positive integers $v$ and $h$,
$\mathbb{R}^{v, h}$ denotes a real vector space
of dimension $v+h$.
Here the index of an orthogonal basis $\langle \epsilon_i \rangle$ of $\mathbb{R}^{v, h}$
starts with $-v$ and ends with $h-1$, i.e.,
$\epsilon_{-v}, \epsilon_{-(v-1)}, \dots, \epsilon_0 , \dots, \epsilon_{h-2}, \epsilon_{h-1}$.
For $-(v-1) \le i \le h-1$, set simple roots as $\alpha_i := \epsilon_{i-1} - \epsilon_{i}$.
Hence there are $v+h-1$ roots $\alpha_{-(v-1) }, \dots, \alpha_{0}, \dots, \alpha_{h -1}$.
For $v, h, u$ and $k$, a space $\mathbb{R}^{v, h}$ is regarded as a subspace
of $\mathbb{R}^{u, k}$ if both $v \le u$ and $h \le k$ hold.
A subset $\Pi (A_{v, h})$ of $\mathbb{R}^{v, h}$ forms a simple system of type $A_{v+h-1}$.
Let $W( A_{v, h} )$ denote a Weyl group generated by $\{ s_{\alpha} \}_{\alpha \in \Pi (A_{v, h} )}$
and $W_0 ( A_{v, h} )$ a subgroup generated by $\{ s_{\alpha} \}_{\alpha \in \Pi (A_{v, h} ) \setminus \{ \alpha_0 \}} $.
Hence $W( A_{v, h})$ is of type $A_{v+h-1}$
and $W_0 (A_{v, h})$ is isomorphic to a direct product of two Weyl groups of type $A_{v-1}$ and $A_{h-1}$.
Furthermore, $W_{0} (A_{v, h})$ is a maximal parabolic subgroup of $W( A_{v, h} )$.

\begin{remark}\label{constructionOfMinusculeElementsOfTypeA}
Set $\lambda_{v, h} := \sum_{-v \le j \le -1} (1/2 \epsilon_j )  + \sum_{0 \le i \le h-1} (-1/2 \epsilon_i )$,
for positive integers $v$ and $h$.
In other words, 
$$\lambda_{v, h} 
= (\overbrace{1/2, 1/2, \dots, 1/2}^{v}, \overbrace{-1/2, -1/2, \dots, -1/2}^{h}).
$$

For an $v$-elements subset $J := \{ j_{-v}, j_{-(v-1)}, \dots, j_{-1} \}$ of 
an $v+h$-elements subset $\{-v, -(v-1), \dots, h-2, h-1 \}$,
where $j_{-v} < j_{-(v-1)} < \dots < j_{-1}$,
let us define an element $w_J \in W(A_{v,h})$ as follows:
$$
w_J := w_{-v, j_{-v}} w_{-(v-1), j_{-(v-1)}} \dots w_{-1, j_{-1}},
$$
where
$$ w_{k, j_k} := s_{j_{k} } s_{j_{k} - 1 } \dots s_{k + 2 } s_{k + 1} $$
for $-v \le k \le -1$.
Note that if $ j_k \le k $, $w_{k, j_k}$ is defined as the identity element.
Indeed, a reduced expression of $w_J$ is obtained if we replace
all of $w_{k, j_k}$ with $s_{j_{k} } s_{j_{k} - 1 } \dots s_{k + 2 } s_{k + 1} $ from 
$w_J = w_{-v, j_{-v}} w_{-(v-1), j_{-(v-1)}} \dots w_{-1, j_{-1}}$. (See \cite{stembridge2001minuscule}.)
Then $w_J$ is a $\lambda_{v, h}$-minuscule element.
On the entries of a vector $\lambda_{-v} \lambda_{-v+1} \dots \lambda_{h-2} \lambda_{h-1} := w_J \lambda_{v,h}$,
the entry $\lambda_j$ is
$1/2$ if $j \in J$ and $-1/2$ otherwise.
Hence $\binom{v+h}{v}$ $\lambda_{v,h}$-minuscule elements are obtained by the construction above.
\end{remark}

Let $\mathcal{Y}_{v,h}$ denote the set of $\lambda_{v,h}$-minuscule elements constructed from Remark \ref{constructionOfMinusculeElementsOfTypeA}.
By a similar argument for $\mathcal{M}_n$ to Section \ref{sec:BitSeqMinElemTypeB},
it is easy to show that $\mathcal{Y}_{v, h}$ is the set of minimal coset representatives
of a right coset $W (A_{v, h}) / W_0 (A_{v, h})$.
Furthermore bijections are defined among the following sets of cardinality $\binom{v+h}{v}$.
\begin{itemize}
\item $\mathbb{Y}_{v, h} := \{ \mathbf{x} \in \{0, 1\}^{v + h} \mid \mathrm{wt}( \mathbf{x} ) = h \}$.
Note that the minimum (resp. maximum) index is $-v$ (resp. $h-1$).
\item $W( A_{v, h} ) \lambda_{v, h}
 = \{ \lambda \in \{1/2, -1/2\}^{v + h}
 \mid \text{there are $v$-entries of $1/2$ in $\lambda$}
   \}$,
\item $\mathcal{Y}_{v, h}$,
\item $W(A_{v, h}) / W_0 (A_{v, h})$.
\end{itemize}

Here four bijections whose domain is the power set $\mathcal{J}_n$ of a set $\{-v,-(v-1), \dots, h-1 \}$ are
introduced as follows:
\begin{itemize}
\item $g_{01} : \mathcal{J}_n \to \mathbb{Y}_{v + h}$,
$$ g_{01} ( J ) := x_{-v} x_{-(v-1)} \dots x_{h-1},$$
where $x_i := 0$ if $i \in J$ and $x_i := 1$ otherwise.
\item $g_{1/2} : \mathcal{J}_n \to \mathcal{Y}_{v, h} \lambda_{v, h}$,
$$ g_{1/2} ( J ) := \lambda_{-v} \lambda_{-(v-1)} \dots \lambda_{h-1},$$
where $\lambda_i := 1/2$ if $i \in J$ and $\lambda_i := -1/2$ otherwise.
\item $g_{M} : \mathcal{J}_n \to \mathcal{Y}_{v, h}$,
$$ g_{M} ( J ) := w_J,$$
where $g_J$ is defined in Remark \ref{constructionOfMinusculeElementsOfTypeA}.
\item $g_{/} : \mathcal{J}_n \to W(A_{v, h}) / W_0 (A_{v, h})$,
$$ g_{/} ( J ) := w_J W_{0}(A_{v, h}).$$
\end{itemize}

\begin{example}\label{example:JandMinuscule}
Set $v := 4, h := 5$ and $J := \{-4, -1, 0 ,4 \}$.
Then
\begin{align*}
g_{01} (J)  & = 011001110,\\
g_{M}  (J)  & = (\mathrm{id}) (s_{-1} s_{-2}) (s_{0} s_{-1}) (s_4 s_3 s_2 s_1 s_0 ).
\end{align*}
\end{example}

From here, for simplicity, $g_{01}(J)$ is denoted by $\mathbf{x}_J$.
\begin{remark}
Minuscule elements are related to minuscule heaps and d-complete posets
\cite{proctor1999dynkin,proctor1999minuscule,stembridge2001minuscule}.
Minuscule heaps are labeled posets whose label set is the simple system.
If the simple system is of type $A$, the minuscule heaps are
related to Young diagram.

Without the definition of minuscule heap,
the related heap for
$g_M (J)$ of Example \ref{example:JandMinuscule} is shown in
Figure \ref{figure:expl_typeA}.
Readers will find a relation between 
the minuscule heap and 
the reduced expression $(s_{-1} s_{-2}) (s_{0} s_{-1}) (s_4 s_3 s_2 s_1 s_0)$
from the figure.

\begin{figure}[htbp]
\begin{center}
\includegraphics[width=6cm, bb=0 0 198 141]{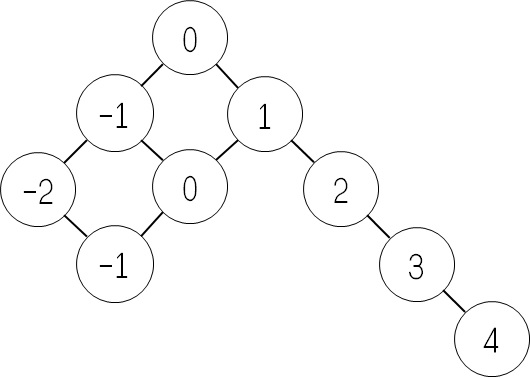} %
\caption{Minuscule Heap for $g_M (\{-4, -1, 0 ,4 \} )$}
\label{figure:expl_typeA}
\end{center}
\end{figure}

By rotating a minuscule heap and replacing circles with squares,
a Young tableau is obtained.
A Young tableau related to the minuscule heap for $g_M (\{-4, -1, 0 ,4 \} )$
is shown in Figure \ref{figure:expl_YoungAndPath}.

\begin{figure}[htbp]
\begin{center}
\includegraphics[width=10cm, bb=0 0 375 176]{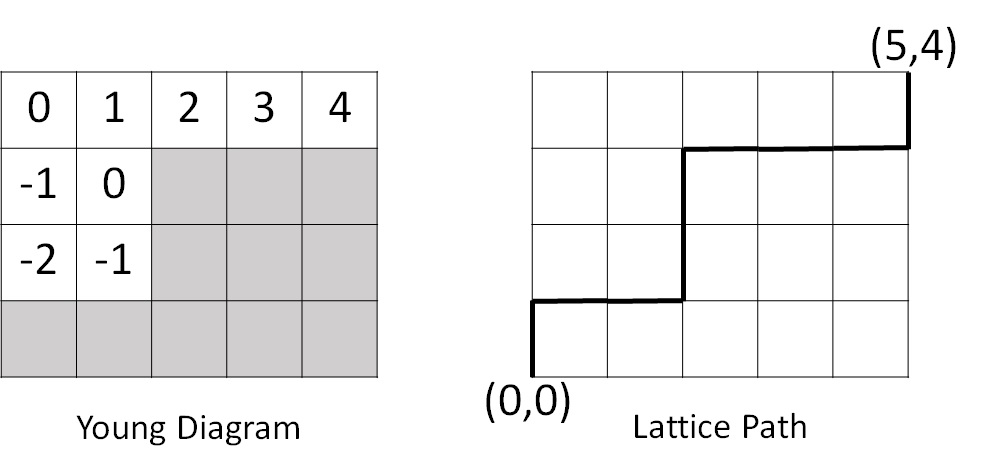} 
\caption{Related Young Diagram for $g_M (\{-4, -1, 0 ,4 \} )$ and Lattice Path for $x_{\{-4, -1, 0 ,4 \}} = 011001110$}
\label{figure:expl_YoungAndPath}
\end{center}
\end{figure}

Furthermore, by embedding a Young tableau into 
an area of a lattice rectangular from $(0,0)$ to $(h, v)$
and tracing its rim,
a lattice path is obtained.
Remark that the lattice path for $g_M (J)$ is related to $x_J$
for $J$.
The relation is bitwise replacement ``$0$'' with a vertical line ``$|$'' and ``$1$'' with
a horizontal line ``$-$''.
\end{remark}

\begin{lemma}
For any $J$,
$$l( w_J ) = \mathrm{inv} ( \mathbf{x}_J ),$$
where 
$\mathrm{inv} ( \mathbf{x}_J )
 = \# \{ (i , j) \mid -v \le i < j \le h-1, x_i > x_j \} $.
In other words, $\mathrm{inv}(\mathbf{x}_J)$ is equal to
the number of cells of the related Young diagram.
\end{lemma}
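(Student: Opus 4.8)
The plan is to mimic the proof of Theorem \ref{theorem:relationMomentAndLength}, replacing the type $B_n$ computation with the analogous one for $W(A_{v,h})$. Concretely, let $s_{i_1}s_{i_2}\cdots s_{i_r}$ be a reduced expression of $w_J$ (for instance the one exhibited in Remark \ref{constructionOfMinusculeElementsOfTypeA}), so that $l(w_J)=r$. Using the $\lambda_{v,h}$-minuscule property, one has $w_J\lambda_{v,h}=\lambda_{v,h}-\alpha_{i_1}-\cdots-\alpha_{i_r}$, hence $\lambda_{v,h}-w_J\lambda_{v,h}=\sum_{k=1}^r\alpha_{i_k}$. The standard identity $(\sum_{\alpha\in\Phi^+}\alpha^\vee,\alpha_i)=1$ for each simple root $\alpha_i$ then gives
$$
l(w_J)=r=\Bigl(\sum_{\alpha\in\Phi^+}\alpha^\vee,\ \lambda_{v,h}-w_J\lambda_{v,h}\Bigr).
$$
So the whole statement reduces to identifying the functional $v\mapsto(\sum_{\alpha\in\Phi^+}\alpha^\vee,v)$ on the weight $\lambda_{v,h}-w_J\lambda_{v,h}$ with $\mathrm{inv}(\mathbf{x}_J)$.

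Next I would compute $\sum_{\alpha\in\Phi^+}\alpha^\vee$ explicitly for type $A_{v+h-1}$ in the $\epsilon$-coordinates indexed by $-v\le i\le h-1$. Since the positive roots are $\epsilon_i-\epsilon_j$ for $i<j$ and these are self-dual, $\sum_{\alpha\in\Phi^+}\alpha^\vee=\sum_{i<j}(\epsilon_i-\epsilon_j)$; the coefficient of $\epsilon_m$ in this sum is $\#\{j>m\}-\#\{i<m\}=(h-1-m)-(m-(-v))=h-v-1-2m$. On the other side, by the bijection $g_{1/2}$ the weight $w_J\lambda_{v,h}$ has $m$-th entry $+1/2$ when $m\in J$ and $-1/2$ otherwise, while $\lambda_{v,h}$ has $+1/2$ in positions $-v,\dots,-1$ and $-1/2$ in positions $0,\dots,h-1$. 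Thus $\lambda_{v,h}-w_J\lambda_{v,h}$ has a nonzero entry exactly at the positions where $\mathbf{x}_J$ ``disagrees'' with the sorted sequence $0^v1^h$: it equals $-1$ at positions $m\in J$ with $m\ge0$ (a $0$ that sits to the right of some $1$'s, i.e. $x_m=0$, and these are the entries moved left) and $+1$ at positions $m\notin J$ with $m<0$. Pairing this with the coefficient vector computed above yields a closed-form linear expression in terms of the positions of the zeros of $\mathbf{x}_J$.

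It then remains to check that this linear expression equals $\mathrm{inv}(\mathbf{x}_J)=\#\{(i,j):i<j,\ x_i>x_j\}$, i.e. the number of $(1,0)$-pairs out of order. This is the combinatorial heart of the argument, though it is routine: writing $\mathbf{x}_J$ as a $0/1$ string with the $v$ zeros at positions $p_1<\cdots<p_v$ (these are exactly the elements of $J$ up to the index shift coming from $g_{01}$'s convention $x_i=0\iff i\in J$), the number of inversions is $\sum_{\ell=1}^v(\text{number of }1\text{'s to the left of }p_\ell)=\sum_\ell\bigl((p_\ell+v)-(\ell-1)\bigr)$ after accounting for the starting index $-v$, and one verifies this matches the pairing computed in the previous step. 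Equivalently — and this is the cleanest way to finish — I would invoke the already-established bijection with Young diagrams / lattice paths from the preceding remark: under the rim-path correspondence, $\mathrm{inv}(\mathbf{x}_J)$ counts the cells of the Young diagram fitting in the $v\times h$ box, and a direct bijection between those cells and the reduced word (each generator $s_i$ in the heap corresponds to one cell) gives $l(w_J)=\#\{\text{cells}\}=\mathrm{inv}(\mathbf{x}_J)$ immediately. The main obstacle is purely bookkeeping: keeping the index conventions of $g_{01}$ (where $i\in J$ means $x_i=0$) consistent with the coordinate computation of $\sum\alpha^\vee$, so that the signs and the index shift by $-v$ come out right; no conceptual difficulty is expected beyond that.
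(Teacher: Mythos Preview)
Your proposal is correct, but it takes a markedly more elaborate route than the paper. You mimic the coroot-pairing argument of Theorem~\ref{theorem:relationMomentAndLength}: express $l(w_J)$ as $(\sum_{\alpha\in\Phi^+}\alpha^\vee,\ \lambda_{v,h}-w_J\lambda_{v,h})$, compute both factors in $\epsilon$-coordinates, and then identify the resulting linear form with $\mathrm{inv}(\mathbf{x}_J)$ via the formula $\sum_\ell\bigl((p_\ell+v)-(\ell-1)\bigr)$ for the number of $(1,0)$ inversions. That final formula is exactly the content of the paper's argument, but the paper reaches it without any coroots or minuscule identities: it simply reads $l(w_J)=\sum_{k=-v}^{-1}(j_k-k)$ directly off the explicit reduced expression $w_J=\prod_k s_{j_k}\cdots s_{k+1}$, and then observes that the number of $1$'s to the left of the $(-k)$th zero $x_{j_k}$ is $(j_k+v)-(v+k)=j_k-k$, so $\mathrm{inv}(\mathbf{x}_J)=\sum_k(j_k-k)$ as well.

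In short, the paper's proof is a two-line double count using only the explicit form of $w_J$ from Remark~\ref{constructionOfMinusculeElementsOfTypeA}; your route recovers the same count but through the detour of the weight/coroot pairing, which in type $A$ adds bookkeeping (the computation of $\sum\alpha^\vee$ and of $\lambda_{v,h}-w_J\lambda_{v,h}$) without adding leverage. Your alternative ``heap $\leftrightarrow$ cells'' bijection is also valid and closer in spirit to the paper's Young-diagram remark, but again the paper's actual proof is the bare arithmetic.
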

\begin{proof}
By the definition of $w_J$,
\begin{align*}
l(w_J)
 &= \sum_{-v \le k \le -1} \# \{ i \mid j_k \ge i > k \}\\
 &= \sum_{-v \le k \le -1} (j_k - k).
\end{align*}
On the other hand,
\begin{align*}
\mathrm{inv}(\mathbf{x}_J)
 &= \# \{ (i , j) \mid -v \le i < j \le h-1, x_i > x_j \} \\
 &= \# \{ (i , j) \mid -v \le i < j \le h-1, x_i = 1, x_j = 0 \} \\
 &= \# \{ (i , j) \mid -v \le i < j, x_i = 1, j \in J  \} \\
 &= \sum_{-v \le k \le -1} \# \{ i \mid -v \le i < j_k, x_i = 1 \}.
\end{align*}
By the definition of $\mathbf{x}_J$,
$x_{j_k}$ is the left $(-k)$th zero.
It implies that there are $v - (- k)$ zeros in $x_{-v} x_{-(v-1)} \dots x_{j_k -1}$.
Hence $ \# \{ i \mid -v \le i < j_k, x_i = 1 \} = (j_k - (-v)) - (v - (-k) ) = j_k - k$.
\end{proof}

\begin{definition}[$\mathbb{Y}_{v, h, a}$]
For any positive integers $v, h$ and any integer $a$,
let us set
$$\mathbb{Y}_{v, h, a} := \{ \mathbf{x} \in \mathbb{Y}_{v, h} \mid \mathrm{inv}(\mathbf{x}) \equiv a \pmod{v + h} \}.$$
\end{definition}
In the next section, $\mathbb{Y}_{v,h,a}$ is shown to be perfect for generalized insertions.

\section{Generalized Insertions and Deletions of Type A: Lattice Paths}\label{sec:perfectTypeA}

Here we propose a generalization of insertions
for $\mathbb{Y}_{v,h}$.
Remark that in Section \ref{sec:inserstionForM}, generalized insertions $\mathcal{I}_{j}^{(n)}$
are defined as a consecutive right subsequence of the following word
$$
s_{n} s_{n-1} \dots s_{1} s_{0} s_{1} \dots s_{n-1} s_n.
$$
In fact, the word is the unique reduced expression of 
a reflection of the highest coroot for $\Phi( B_{n+1} )$.
From this point of view, our generalized insertions for $\mathbb{Y}_{v, h}$
are introduced by using the highest coroot $\epsilon_{-(v+1)} - \epsilon_{h}$
for $\Phi( A_{v+1, h+1} )$.
As is different from a case of type $B$, a reduced expression
of the reflection is not unique.
However here, we focus on the following choice:
$$
s_{h} s_{h-1} \dots s_{-(v-1)} s_{-v} s_{-(v-1)} \dots s_{h-1} s_h
$$
By the choice of this reduced expression,
the related operations $\mathcal{K}^{(v,h)}_j$ ($0 \le j \le 2(v+h) + 1$)
are defined as
\begin{eqnarray*}
\mathcal{K}^{(v,h)}_0 &:=& \mathrm{id},\\
\mathcal{K}^{(v,h)}_{j} &:=& s_{h + 1 - j} \mathcal{K}^{(v,h)}_{j-1} \;\; (1 \le j \le v+h+1),\\
\mathcal{K}^{(v,h)}_{j} &:=& s_{j - 2v - h - 1} \mathcal{K}^{(v,h)}_{j-1} \;\; (v+h+2 \le j \le 2(v+h)+1).
\end{eqnarray*}
Additionally let $\kappa^{(v,h)}$ be a map from $\mathbb{Y}_{v,h}$ to $\mathbb{Y}_{v+1, h+1}$
that maps $\mathbf{y}$ to $0 \mathbf{y} 1$
and set $K^{(v,h)}_j := \mathcal{K}^{(v,h)}_j \circ \kappa^{(v,h)}$.

\begin{theorem}\label{thm:I1I2forK}
The operations $K^{(v,h)}_i$ satisfy the conditions (I1) and (I2)
for
$
X := \mathbb{Y}_{v,h},
Y := \mathbb{Y}_{v+1, h+1},
r := 2(v+h)+1,
f := \mathrm{inv},
$ and $S := v+h+2$
with notation in Definition \ref{def:generalizedInsertions} (Generalized Insertions).
In other words,
$\mathbf{K}^{(v,h)} := \{ K^{(v,h)}_j \mid 0 \le j \le 2(v+h) + 1 \}$
is a set of generalized insertions.
\end{theorem}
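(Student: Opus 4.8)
The plan is to carry everything over to constant-weight bit strings and then read (I1) and (I2) off an explicit description of the list $K^{(v,h)}_0(\mathbf{y}),K^{(v,h)}_1(\mathbf{y}),\dots,K^{(v,h)}_{2(v+h)+1}(\mathbf{y})$ attached to a fixed $\mathbf{y}\in\mathbb{Y}_{v,h}$. First I would record how a single generator acts on bit strings: via the bijections between $\mathbb{Y}_{v,h}$, the orbit $W(A_{v,h})\lambda_{v,h}$ and $\mathcal{Y}_{v,h}$ from \S\ref{consTypeA}, and since the action of $W(A_{v,h})$ on $\mathcal{Y}_{v,h}$ is the linear action on the minuscule orbit, $s_i$ sends a bit string to the one obtained by exchanging the entries in positions $i-1$ and $i$ when they differ, and fixes the string when they agree. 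Consequently $K^{(v,h)}_j(\mathbf{y})$ is exactly the string obtained from $\kappa^{(v,h)}(\mathbf{y})=0\,\mathbf{y}\,1$ by performing, in order, the first $j$ of the adjacent exchanges prescribed by $s_h s_{h-1}\cdots s_{-v}\cdots s_{h-1}s_h$, each exchange omitted whenever the two entries it would swap are equal.

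Next I would make this list explicit. In the first phase $j=1,\dots,v+h+1$ the reflections $s_h,s_{h-1},\dots,s_{-v}$ drag the appended $1$ leftward past the $0$'s it meets, turning $0\,\mathbf{y}\,1$ into $1\,0\,\mathbf{y}$ (equivalently $\mathcal{K}^{(v,h)}_{v+h+1}$ is the long cycle moving the last coordinate to the front); in the second phase $j=v+h+2,\dots,2(v+h)+1$ the reflections $s_{-v+1},\dots,s_h$ drag the freed $0$ rightward, turning $1\,0\,\mathbf{y}$ into $1\,\mathbf{y}\,0$ — consistently with $\mathcal{K}^{(v,h)}_{2(v+h)+1}$ being the reflection in the highest root $\epsilon_{-(v+1)}-\epsilon_h$, which merely transposes the first and last coordinates. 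The key point extracted along the way is that, in both phases, every exchange that actually takes place replaces a local pattern $0\,1$ by $1\,0$, and this holds uniformly in $\mathbf{y}$.

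Given this, (I1) and (I2) follow quickly. An adjacent exchange leaves $\mathrm{inv}$ unchanged when the two entries agree and raises it by exactly $1$ when it replaces $0\,1$ by $1\,0$; combined with the previous paragraph, for each $0\le j<2(v+h)+1$ either $K^{(v,h)}_{j+1}(\mathbf{y})=K^{(v,h)}_j(\mathbf{y})$ or $\mathrm{inv}(K^{(v,h)}_{j+1}(\mathbf{y}))=\mathrm{inv}(K^{(v,h)}_j(\mathbf{y}))+1$, which is (I1). For (I2), since $K^{(v,h)}_0(\mathbf{y})=0\,\mathbf{y}\,1$ and $K^{(v,h)}_{2(v+h)+1}(\mathbf{y})=1\,\mathbf{y}\,0$, a direct inversion count — the inversions internal to $\mathbf{y}$ cancel, the new leading $1$ makes one inversion with each $0$ to its right, and each $1$ of $\mathbf{y}$ makes one inversion with the new trailing $0$ — shows $\mathrm{inv}(K^{(v,h)}_{2(v+h)+1}(\mathbf{y}))-\mathrm{inv}(K^{(v,h)}_0(\mathbf{y}))$ depends only on the (fixed) numbers of $0$'s and $1$'s in $\mathbf{y}$, i.e. only on $v$ and $h$, which is (I2). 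It then remains only to note that the parameter identifications $X=\mathbb{Y}_{v,h}$, $Y=\mathbb{Y}_{v+1,h+1}$, $r=2(v+h)+1$, $f=\mathrm{inv}$ are precisely those required by Definition \ref{def:generalizedInsertions}.

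The main obstacle is the uniform monotonicity claim in the second paragraph: which exchanges get omitted genuinely depends on $\mathbf{y}$, so one must argue carefully — by an induction maintaining the bookkeeping invariant ``the entry about to be overtaken is a $1$'' in the first phase, and its mirror image ``the entry about to be overtaken is a $0$'' in the second — that every exchange that is not omitted is of the increasing type $0\,1\mapsto 1\,0$. A cleaner alternative is to observe that $\kappa^{(v,h)}(\mathbf{y})$ corresponds under \S\ref{consTypeA} to a $\lambda_{v+1,h+1}$-minuscule element and that $s_h s_{h-1}\cdots s_{-v}\cdots s_{h-1}s_h$ is a reduced expression of the highest-root reflection, so that each prefix of this word acts on that minuscule element by raising length by $0$ or $1$; either way, everything else is the routine inversion arithmetic sketched above.
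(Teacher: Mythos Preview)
Your approach is correct and matches the paper's: both explicitly describe $K_j^{(v,h)}(\mathbf{y})$ as the strings obtained from $0\,\mathbf{y}\,1$ by sliding first the inserted $1$ leftward and then the $0$ rightward, observe that every nontrivial step is an adjacent swap $01\mapsto10$ (yielding (I1)), and compare $\mathrm{inv}$ at the endpoints $0\,\mathbf{y}\,1$ and $1\,\mathbf{y}\,0$ (yielding (I2)). The paper dispatches your flagged ``main obstacle'' exactly by your first suggestion---writing out the full table of $K_j^{(v,h)}(\mathbf{y})$ and doing the three-case split on $j$---and also carries out the endpoint inversion count explicitly to pin down the value of $S$, which you left implicit.
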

\begin{proof}
First we observe how $K^{(v,h)}_i$ acts on a sequence
$\mathbf{y} := y_{-v} y_{-(v-1)} \dots y_{h-2} y_{h-1}$:
\begin{align*}
K^{(v,h)}_0 (\mathbf{y} ) &= 0 y_{-v} y_{-(v-1)} \dots y_{h-2} y_{h-1} 1,\\
K^{(v,h)}_1 (\mathbf{y} ) &= 0 y_{-v} y_{-(v-1)} \dots y_{h-2} 1 y_{h-1},\\
K^{(v,h)}_2 (\mathbf{y} ) &= 0 y_{-v} y_{-(v-1)} \dots 1 y_{h-2} y_{h-1},\\
& \vdots \\
K^{(v,h)}_{v+h-2}   (\mathbf{y} ) &= 0 y_{-v} 1 y_{-(v-1)} \dots y_{h-2} y_{h-1},\\
K^{(v,h)}_{v+h-1} (\mathbf{y} ) &= 0 1 y_{-v} y_{-(v-1)} \dots y_{h-2} y_{h-1},\\
K^{(v,h)}_{v+h} (\mathbf{y} ) &= 1 0 y_{-v} y_{-(v-1)} \dots y_{h-2} y_{h-1},\\
K^{(v,h)}_{v+h+1} (\mathbf{y} ) &= 1 y_{-v} 0 y_{-(v-1)} \dots y_{h-2} y_{h-1},\\
K^{(v,h)}_{v+h+2} (\mathbf{y} ) &= 1 y_{-v} y_{-(v-1)} 0 \dots y_{h-2} y_{h-1},\\
& \vdots \\
K^{(v,h)}_{2(v+h)-1} (\mathbf{y} ) &= 1 y_{-v} y_{-(v-1)} \dots 0 y_{h-2} y_{h-1},\\
K^{(v,h)}_{2(v+h)} (\mathbf{y} ) &= 1 y_{-v} y_{-(v-1)} \dots y_{h-2} 0 y_{h-1},\\
K^{(v,h)}_{2(v+h)+1} (\mathbf{y} ) &= 1 y_{-v} y_{-(v-1)} \dots y_{h-2} y_{h-1} 0.
\end{align*}

For $0 \le j \le v+h-1$,
$K^{(v,h)}_{j}   (\mathbf{y} ) \neq K^{(v,h)}_{j+1}   (\mathbf{y} )$ is equivalent to
$y_{h+1-j} 1 \neq 1 y_{h+1-j}$, i.e., $y_{h+1-j} = 0$.
In this case, it is easy to check
$\mathrm{inv} \circ K^{(v,h)}_{j+1}   (\mathbf{y} ) = \mathrm{inv} \circ K^{(v,h)}_{j}   (\mathbf{y} )  + 1$.

For $j = v+h$,
as is observed above, $K^{(v,h)}_{v+h+1} (\mathbf{y} ) \neq K^{(v,h)}_{v+h+2} (\mathbf{y} )$ holds.
In this case, it is also easy to check
$\mathrm{inv} \circ K^{(v,h)}_{j+1}   (\mathbf{y} ) = \mathrm{inv} \circ K^{(v,h)}_{j}   (\mathbf{y} )  + 1$.

For $v+h+1 \le j <  2(v+h)+1$,
$K^{(v,h)}_{j}   (\mathbf{y} ) \neq K^{(v,h)}_{j+1}   (\mathbf{y} )$ is equivalent to
$0 y_{j - 2v - h -2 } \neq y_{j - 2v - h -2} 0$, i.e., $y_{j - 2v - h -2 } = 1$.
In this case, it is also easy to check
$\mathrm{inv} \circ K^{(v,h)}_{j+1}   (\mathbf{y} ) = \mathrm{inv} \circ K^{(v,h)}_{j}   (\mathbf{y} )  + 1$.

Hence (I1) holds.

For showing (I2),
let us calculate $\mathrm{inv} \circ K^{(v,h)}_{0}   (\mathbf{y} )$ and $\mathrm{inv} \circ K^{(v,h)}_{2(v+h)+1}   (\mathbf{y} )$.
Set $\mathbf{x} := K^{(v,h)}_{0}   (\mathbf{y} )$
and $\mathbf{z} := K^{(v,h)}_{2(v+h)+1}   (\mathbf{y} )$.
By the definition of $\mathrm{inv}$,
\begin{align*}
 & \mathrm{inv} ( \mathbf{x} ) & \\
 = &\# \{ (i , j) \mid -(v+1) \le i < j \le h, x_i > x_j \}  & \\
 = &\# \{ (i , j) \mid -v \le i < j \le h-1 , x_i > x_j \}  & (x_{-(v+1)} = 0, x_{h} = 1)\\
 = &\# \{ (i , j) \mid -v \le i < j \le h-1 , y_i > y_j \} & (x_{j} = y_j, \text{ for } -v \le j \le h-1)\\
 = &\mathrm{inv} ( \mathbf{y} ), &
\end{align*}
and
\begin{align*}
 & \mathrm{inv} ( \mathbf{z} ) & \\
 = &\# \{ (i , j) \mid -(v+1) \le i < j \le h, z_i > z_j \}  & \\
 = &    \# \{ (i , j) \mid -v \le i < j \le h-1 , z_i > z_j \}  & \\
   & +  \# \{ (-(v+1) , j) \mid -v \le j \le h-1 , 1 > z_j \}   & \\
   & +  \# \{ (i , h) \mid -v \le i \le h-1 , z_i > 0 \}     & \\
   & +  \# \{ (-(v+1), h) \}                                 & (z_{-(v+1)} = 1, z_{h} = 0)\\
 = &    \# \{ (i , j) \mid -v \le i < j \le h-1 , z_i > z_j \}  & \\
   & +  \# \{ j \mid -v \le j \le h-1 , z_j = 0 \} & \\
   & +  \# \{ i \mid -v \le i \le h-1 , z_i = 1 \} & \\
   & +  1                                 &  (z_i > z_j \iff z_i=1, z_j=0 )\\
 = & \# \{ (i , j) \mid -v \le i < j \le h-1 , y_i > y_j \} \\
   &  + (v + h + 1) + 1 & (z_{j} = y_j, \text{ for } -v \le j \le h-1)\\
 = &\mathrm{inv} ( \mathbf{y} ) + v + h + 2.&
\end{align*}
Hence (I2) holds.
\end{proof}

By the observation for actions on a bit sequence by $\mathbf{K}^{(v,h)}$ in the proof above,
$\mathbf{K}^{(v,h)}$ is the set of standard two insertions of bits $x_i, x_j \; (i < j)$
with $i=1$ and $x_i \neq x_j$.
Then a set $\mathbf{E}^{(v,h)}$ is defined as the set of standard two deletions
of the first bit $y_1$ and another bit $y_j$ with $y_1 \neq y_i$.
Conditions (D1) and (D2) in Definition \ref{def:generalizedDeletions}
clearly holds on $\mathbf{E}^{(v,h)}$ with $\mathbf{K}^{(v,h)}$.
By Lemma \ref{generalizedPerfectCodes},
$\mathbb{Y}_{v+1,h+1,a}$ is perfect by $\mathbf{E}^{(v,h)}$.
Regarding $\mathbb{Y}_{v+1,h+1,a}$
as a set of lattice paths,
the following is obtained.

\begin{theorem}
For non-negative integers $v, h$
and an integer $a$,
let $\mathbb{Y}_{v+1, h+1, a}$ be 
the set of the shortest lattice paths
from $(0,0)$ to $(h+1,v+1)$
with the number of cells on the upper-left side
of the path is equivalent to $a$ modulo $v+h+2$.

For any path $\mathbf{y} \in \mathbb{Y}_{v+1, h+1, a}$,
$\mathrm{dS}_{PATH}(\mathbf{y})$ denotes
the set of paths in $\mathbb{Y}_{v,h}$
obtained by 
\begin{itemize}
\item
If $\mathbf{y}$ begins with a horizontal step $y_1$,
delete $y_1$ and a vertical step $y_j$.
\item
If $\mathbf{y}$ begins with a vertical step $y_1$,
delete $y_1$ and a horizontal step $y_j$.
\end{itemize}

Then $\{ \mathbf{dS}_{PATH}(\mathbf{y}) \mid \mathbf{y} \in \mathbb{Y}_{v+1,h+1,a} \}$
is a partition of $\mathbb{Y}_{v, h}$.
\end{theorem}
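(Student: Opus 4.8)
The plan is to read the statement as nothing more than the translation, under the standard bit-sequence/lattice-path dictionary, of the perfectness of $\mathbb{Y}_{v+1,h+1,a}$ that has already been obtained from Theorem~\ref{thm:I1I2forK} together with Lemma~\ref{generalizedPerfectCodes}. So the work is to set up that dictionary carefully, match up the deletion spheres, and transport the partition.

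First I would fix the bijection $\phi$ from $\mathbb{Y}_{v,h}$ onto the set of shortest lattice paths from $(0,0)$ to $(h,v)$ that replaces each bit $1$ by a horizontal step and each bit $0$ by a vertical step. Under $\phi$, the Lemma establishing $l(w_J)=\mathrm{inv}(\mathbf{x}_J)$ (together with the remark relating minuscule heaps, Young diagrams and lattice paths) identifies $\mathrm{inv}(\mathbf{y})$ with the number of cells lying on the upper-left side of the path $\phi(\mathbf{y})$. Consequently $\phi$ carries $\mathbb{Y}_{v+1,h+1,a}$, defined by the congruence $\mathrm{inv}\equiv a\pmod{v+h+2}$, bijectively onto the set of paths described in the statement, and it carries $\mathbb{Y}_{v,h}$ onto the set of all shortest paths from $(0,0)$ to $(h,v)$.

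Second I would identify $\mathrm{dS}_{PATH}$ with the generalized deletion sphere. Deleting the first step $y_1$ and a later step of the opposite orientation is exactly the $\phi$-image of the family $\mathbf{E}^{(v,h)}$ of standard two-deletions — deletion of $y_1$ together with a bit $y_j$, $j>1$, satisfying $y_j\neq y_1$ — introduced in the discussion following Theorem~\ref{thm:I1I2forK}. By Theorem~\ref{thm:I1I2forK} the set $\mathbf{K}^{(v,h)}$ consists of generalized insertions, and $\mathbf{E}^{(v,h)}$ satisfies conditions (D1) and (D2) against $\mathbf{K}^{(v,h)}$; hence the remark after Lemma~\ref{lemma:relInsAndDel} gives $\mathrm{dS}(\mathbf{y})=\{D(\mathbf{y})\mid D\in\mathbf{E}^{(v,h)},\ D(\mathbf{y})\text{ defined}\}$, which transports under $\phi$ precisely to $\mathrm{dS}_{PATH}(\phi(\mathbf{y}))$. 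Then I would invoke Lemma~\ref{generalizedPerfectCodes} with $X:=\mathbb{Y}_{v,h}$, $Y:=\mathbb{Y}_{v+1,h+1}$, $f:=\mathrm{inv}$ and deletions $\mathbf{E}^{(v,h)}$: it yields that $\{\mathrm{dS}(\mathbf{c})\mid\mathbf{c}\in C_a\}$ partitions $\mathbb{Y}_{v,h}$, where $C_a=\mathbb{Y}_{v+1,h+1,a}$, and applying $\phi$ gives the claimed partition. The degenerate cases $v=0$ or $h=0$, where $\mathbb{Y}_{v,h}$ is a single path, are immediate.

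The step I expect to be the main obstacle is verifying (D1) and (D2) for $\mathbf{E}^{(v,h)}$ against $\mathbf{K}^{(v,h)}$: one must check that every prescribed two-insertion can be undone by some prescribed two-deletion and conversely, handling the boundary and coincidence cases — when the inserted (resp. deleted) pair of steps is adjacent, or when $\mathbf{y}$ starts with a run of equal bits so that several of the $K^{(v,h)}_j$ coincide, a point where a naive operation count and the true size of the insertion sphere diverge. It is also worth double-checking the modulus: by Lemma~\ref{lemma:imf} the insertion sphere of $\mathbf{y}$ is a set of $v+h+2$ consecutive $\mathrm{inv}$-values $\mathrm{inv}(\mathbf{y}),\dots,\mathrm{inv}(\mathbf{y})+v+h+1$, so the modulus appearing in Lemma~\ref{generalizedPerfectCodes} is indeed $v+h+2$, consistent with the definition of $\mathbb{Y}_{v+1,h+1,a}$ used in the statement.
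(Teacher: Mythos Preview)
Your proposal is correct and follows essentially the same route as the paper: the paper derives the theorem immediately from Theorem~\ref{thm:I1I2forK} by introducing the deletion set $\mathbf{E}^{(v,h)}$, asserting (D1)--(D2), invoking Lemma~\ref{generalizedPerfectCodes}, and then reinterpreting everything via the bit-sequence/lattice-path dictionary. Your write-up is in fact more careful than the paper's on two points --- you flag the (D1)/(D2) verification as the nontrivial step (the paper says only ``clearly holds''), and your modulus check is right: the correct value is $S=v+h+1$, giving modulus $S+1=v+h+2$, whereas the paper's statement of Theorem~\ref{thm:I1I2forK} records $S=v+h+2$, an apparent off-by-one that your computation silently repairs.
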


\begin{example}
For a case with $h=3, v=2, a=1$,
the number of cells for a path
must be $1 \pmod{7(=3+2+2)}$.
There are five such paths 
$0010111$, $0111100$, $1011010$, $1100110$ and $1101001$ with 1 or 8 cells.
(See Figure \ref{figure:expl_PathDel}.)

By the deletion for the five paths,
\begin{align*}
\mathbf{dS}_{PATH}(0010111) &= \{ 00111, 01011 \},\\
\mathbf{dS}_{PATH}(0111100) &= \{ 11100 \},\\
\mathbf{dS}_{PATH}(1011010) &= \{ 11010, 01110, 01101 \},\\
\mathbf{dS}_{PATH}(1100110) &= \{ 10110, 10011 \},\\
\mathbf{dS}_{PATH}(1101001) &= \{ 11001, 10101 \}.
\end{align*}
Hence $\{ \mathbf{dS}(\mathbf{y}) \mid \mathbf{y} \in \mathbb{Y}_{3+1,2+1,1} \}$
covers the all $\binom{3+2}{3}=10$ paths in $\mathbb{Y}_{3,2}$ without overlapping.

\begin{figure}[htbp]
\begin{center}
\includegraphics[width=10cm, bb=0 0 312 195]{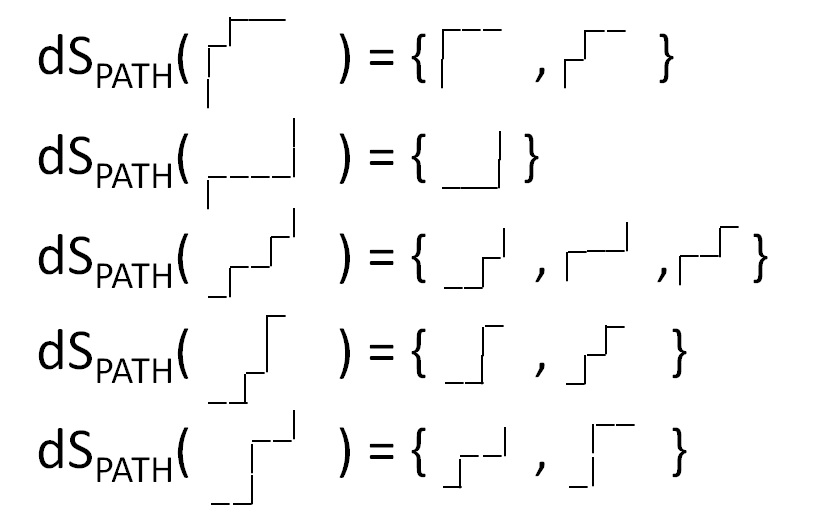} 
\caption{Perfectness for $Y_{3,2,1}$}
\label{figure:expl_PathDel}
\end{center}
\end{figure}
\end{example}

\section{Balanced Adjacent Insertions (BAI) and Deletions (BAD)}\label{sec:BAIBAD}
A main goal of this section is to show the following:
\begin{theorem}\label{thm:BADsPerfect}
For any non-negative integers $v$ and $h$ and any integer $a$,
$$\mathbb{B}_{v+1, h+1, a} := \{ \sigma( \mathbf{x} ) \mid \mathbf{x} \in \mathbb{Y}_{v+1, h+1, a} \}$$
is perfect for ``balanced deletions,''
where $\sigma$ is an azby permutation\footnote{$\sigma(ab \dots yz) = azby \dots$} defined as
$$\sigma( x_1 x_2 x_3 \dots x_{n-1} x_n )
:= x_1 x_n x_2 x_{n-1} x_3 \dots$$
for any positive integer $n$
and balanced adjacent deletions (BADs) are deletion for $01$ or $10$.
\end{theorem}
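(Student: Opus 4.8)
The plan is to place balanced adjacent insertions (BAIs) and balanced adjacent deletions (BADs) inside the generalized framework of Section~\ref{sec:genInsDel}. Take $X := \mathbb{Y}_{v,h}$ and $Y := \mathbb{Y}_{v+1,h+1}$, let $\mathbf{I}$ be the set of BAIs (the maps $X\to Y$ splicing a $01$ or a $10$ block between two consecutive coordinates of a word), let $\mathbf{D}$ be the set of BADs, and choose as the weight function
\[
 f(\mathbf{z}) := \mathrm{inv}(\sigma^{-1}(\mathbf{z})),
\]
i.e.\ the number of cells of the Young diagram of the lattice path $\sigma^{-1}(\mathbf{z})$. Since $\sigma$ only permutes coordinates it is a weight-preserving bijection of $\{0,1\}^{v+h+2}$, so
\[
 \{\mathbf{z}\in Y : f(\mathbf{z})\equiv a \pmod{v+h+2}\} = \sigma(\mathbb{Y}_{v+1,h+1,a}) = \mathbb{B}_{v+1,h+1,a}
\]
by the definition of $\mathbb{B}_{v+1,h+1,a}$. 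So it suffices to check axioms (I1),(I2) of Definition~\ref{def:generalizedInsertions} for $\mathbf{I}$ against $f$ and axioms (D1),(D2) of Definition~\ref{def:generalizedDeletions} for $\mathbf{D}$ against $\mathbf{I}$, and then invoke Lemma~\ref{generalizedPerfectCodes}. Moreover, since the proof of Lemma~\ref{generalizedPerfectCodes} only uses the conclusion of Lemma~\ref{lemma:imf}, it is enough, in place of (I1),(I2), to establish the single claim $(\ast)$: for every $\mathbf{x}\in\mathbb{Y}_{v,h}$ the insertion sphere $\mathrm{iS}(\mathbf{x})$ consists of exactly $v+h+2$ words whose $f$-values are $v+h+2$ consecutive integers.

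The deletion side is routine. If $D\in\mathbf{D}$ deletes an adjacent block $z_pz_{p+1}\in\{01,10\}$ of $\mathbf{z}$, re-splicing that block at the same spot is a BAI $I$ with $I\circ D(\mathbf{z})=\mathbf{z}$, which is (D1); conversely, if $I$ splices $01$ (resp.\ $10$) between positions $p-1$ and $p$ of $\mathbf{x}$, those two coordinates of $I(\mathbf{x})$ still read $01$ (resp.\ $10$), so deleting them is a BAD inverting $I$, which is (D2). Then Lemma~\ref{lemma:relInsAndDel} identifies the deletion sphere $\mathrm{dS}(\mathbf{z})=\{\mathbf{x}:\mathbf{z}\in\mathrm{iS}(\mathbf{x})\}$ with $\{D(\mathbf{z}):D\in\mathbf{D}\}$, i.e.\ with the set of words reachable from $\mathbf{z}$ by one ``deletion for $01$ or $10$''. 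Granting $(\ast)$ and rerunning the argument of Lemma~\ref{generalizedPerfectCodes}: every $\mathbf{x}\in\mathbb{Y}_{v,h}$ belongs to $\mathrm{dS}(\mathbf{z})$ for exactly one $\mathbf{z}\in\mathbb{B}_{v+1,h+1,a}$ — namely the unique $\mathbf{z}\in\mathrm{iS}(\mathbf{x})$ with $f(\mathbf{z})\equiv a\pmod{v+h+2}$ — which is exactly perfectness of $\mathbb{B}_{v+1,h+1,a}$ for BADs.

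The crux is $(\ast)$, and this is where the azby permutation does the work. One cannot merely transport Theorem~\ref{thm:I1I2forK} through $\sigma$: splicing a $01$ or $10$ block into $\mathbf{x}$ is \emph{not} carried by $\sigma$ to one of the ``prepend a bit, insert its complement'' insertions $\mathbf{K}^{(v,h)}$, and indeed $\mathrm{iS}(\mathbf{x})$ and $\sigma(\mathrm{iS}_{\mathbf{K}^{(v,h)}}(\sigma^{-1}(\mathbf{x})))$ differ as sets in general. Instead I would prove $(\ast)$ by a direct count of inversions in the azby-folded coordinates: enumerate the BAIs of $\mathbf{x}$ by the location $p$ of the inserted block and by its type, describe $\sigma^{-1}(I(\mathbf{x}))$ explicitly as $\sigma^{-1}(\mathbf{x})$ with a single pair of coordinates inserted at the position prescribed by the azby shuffle, and then show — paralleling the two-step computation in the proof of Theorem~\ref{thm:I1I2forK} — that passing from one BAI to the next either leaves the word unchanged or changes $\mathrm{inv}(\sigma^{-1}(I(\mathbf{x})))$ by exactly one, while the extreme BAIs (block at the very front versus at the very back of $\mathbf{x}$) give $f$-values differing by $v+h+1$. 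Hence the $f$-values of $\mathrm{iS}(\mathbf{x})$ fill a block of $v+h+2$ consecutive integers, giving $(\ast)$. The delicate point, as in Theorem~\ref{thm:I1I2forK}, is the single unit jump that occurs where the two halves of the azby interleaving meet in the middle of the word; keeping the inversion bookkeeping honest across that fold is the main obstacle, and everything else is formal.
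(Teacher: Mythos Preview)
Your approach is correct and, once carried out, is the same computation the paper performs --- but you have talked yourself out of the clean way to organize it. You are right that $\sigma$ does not conjugate the BAIs to the family $\mathbf{K}^{(v,h)}$ of Theorem~\ref{thm:I1I2forK}. The paper's point, however, is that $\sigma$ \emph{does} conjugate the BAIs to a different family $\mathbf{H}^{(v,h)}$, obtained from another reduced expression of the same highest-coroot reflection (the ``interleaved'' word $s_{-v}s_{h}s_{-(v-1)}s_{h-1}\cdots$). Concretely, the paper checks that $\sigma\bigl(H^{(v,h)}_j(\mathbf{y})\bigr)$ is exactly a BAI applied to $\sigma(\mathbf{y})$, so the pair $(\text{BAI},\,\mathrm{inv}\circ\sigma^{-1})$ that you propose to analyze is literally the $\sigma$-transport of the pair $(\mathbf{H}^{(v,h)},\,\mathrm{inv})$. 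Your planned ``direct count of inversions in the azby-folded coordinates'' would therefore reproduce the proof of (I1),(I2) for $\mathbf{H}^{(v,h)}$ (the paper's Theorem~\ref{thm:I1I2forH}), after which perfectness follows from Lemma~\ref{generalizedPerfectCodes} exactly as you say.

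One small slip to watch: in the correct $j$-ordering the two extreme BAIs are not ``block at the front versus block at the back'' but rather the two insertions of $01$ and of $10$ at the \emph{front} of $\mathbf{x}$ (corresponding to $H^{(v,h)}_0(\mathbf{y})=0\mathbf{y}1$ and $H^{(v,h)}_{2(v+h)+1}(\mathbf{y})=1\mathbf{y}0$); the path between them snakes to the middle of the word and back. This would come out automatically once you actually write down $\sigma^{-1}(I(\mathbf{x}))$, which is precisely the step where the family $\mathbf{H}^{(v,h)}$ appears.
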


\begin{example}
Since $\mathbb{Y}_{3,3,1} = \{ 001011,\allowbreak 110010,\allowbreak 101100 \},$
a set $\mathbb{B}_{3,3,1}$ is  $\{010110,\allowbreak 101100,\allowbreak 100011 \}$
By the definition of BADs,
$\mathrm{dS}( 010110 ) = \{ 0110, 0101 \},\allowbreak
\mathrm{dS}( 101100 ) = \{ 1100, 1010 \},\allowbreak
\mathrm{dS}( 100011 ) = \{0011, 1001 \}$.
Hence any pair of deletion spheres has no common elements and $\mathrm{dS}(010110) \cup \mathrm{dS}( 101100 ) \cup \mathrm{dS}( 100011 ) = \mathbb{Y}_{2,2 }$.
\end{example}

To show Theorem \ref{thm:BADsPerfect},
we continue to study $\mathbb{Y}_{v,h}$ (or $\mathbb{Y}_{v+1, h+1}$) and generalized insertions but with
a different reduced expression for the reflection related to the highest coroot.

Our new choice of a reduced expression in $W( A_{v+1, h+1} )$ is
$$
s_{-v} s_{h} s_{-(v-1)} s_{h-1} s_{-(v-2)} s_{h-2} \dots s_{-(v-2)} s_{h-1} s_{-(v-1)} s_{h} s_{-v}.
$$
For example in a case $v=2, h=3$,
the reduced expression is
$$
s_{-2} s_{3} s_{-1} s_{2} s_{0} s_{1} s_{0} s_{2} s_{-1} s_{3} s_{-2}.
$$
As another example in a case $v=2, h=4$,
the reduced expression is 
$$
s_{-2} s_{4} s_{-1} s_{3} s_{0} s_{2} s_{1} s_{2} s_{0} s_{3} s_{-1} s_{4} s_{-2}.
$$

More explicitly, operators $\mathcal{H}^{(v,h)}_i$ ($0 \le i \le 2(v+h)+1$) are defined
as follows:
\begin{eqnarray*}
\mathcal{H}^{(v,h)}_0 &:=& \mathrm{id},\\
\mathcal{H}^{(v,h)}_{2i+1} &:=& s_{-v+i} \mathcal{H}^{(v,h)}_{2i} \;\; (0 \le 2i \le v+h),\\
\mathcal{H}^{(v,h)}_{2i} &:=& s_{h-i} \mathcal{H}^{(v,h)}_{2i-1} \;\; (0 \le 2i-1 \le v+h),\\
\mathcal{H}^{(v,h)}_{2(v+h)+1-2i} &:=& s_{-v + i} \mathcal{H}^{(v,h)}_{2(v+h)-2i} \;\; (0 \le 2i \le v+h),\\
\mathcal{H}^{(v,h)}_{2(v+h)+1-(2i+1)} &:=& s_{h-i} \mathcal{H}^{(v,h)}_{2(v+h)+1-(2i+2)} \;\; (0 \le 2i + 1 \le v+h),
\end{eqnarray*}

Similar to the previous section, 
we define operators $H^{(v,h)}_i := \mathcal{H}^{(v,h)}_i \circ \kappa^{(v,h)}$
and observe their action on $\mathbb{Y}_{v,h}$.
\begin{align*}
H^{(v,h)}_0 (\mathbf{y}) &= 0 y_{-v} y_{-(v-1)} \dots y_{h-1} y_{h} 1,\\
H^{(v,h)}_1 (\mathbf{y}) &= y_{-v} 0 y_{-(v-1)} \dots y_{h-1} y_{h} 1,\\
H^{(v,h)}_2 (\mathbf{y}) &= y_{-v} 0 y_{-(v-1)} \dots y_{h-1} 1 y_{h},\\
H^{(v,h)}_3 (\mathbf{y}) &= y_{-v} y_{-(v-1)} 0 \dots y_{h-1} 1 y_{h},\\
                         & \vdots\\
H^{(v,h)}_{2(v+h)-1} (\mathbf{y}) &= y_{-v} 1 y_{-(v-1)} \dots y_{h-1} 0 y_{h},\\
H^{(v,h)}_{2(v+h)}   (\mathbf{y}) &= y_{-v} 1 y_{-(v-1)} \dots y_{h-1} y_{h} 0,\\
H^{(v,h)}_{2(v+h)+1} (\mathbf{y}) &= 1 y_{-v} y_{-(v-1)} \dots y_{h-1} y_{h} 0.\\
\end{align*}
Hence by similar argument to prove Theorem \ref{thm:I1I2forK},
we can show the following:
\begin{theorem}\label{thm:I1I2forH}
The operations $H^{(v,h)}_i$ satisfy the conditions (I1) and (I2)
for
$
X := \mathbb{Y}_{v,h},
Y := \mathbb{Y}_{v+1, h+1},
r := 2(v+h)+1,
f := \mathrm{inv},
$ and $S := v+h+2$
with notation in Definition \ref{def:generalizedInsertions} (Generalized Insertions).
In other words,
$\mathbf{H}^{(v,h)} := \{ H^{(v,h)}_j \mid 0 \le j \le 2(v+h) + 1 \}$
is a set of generalized insertions.
\end{theorem}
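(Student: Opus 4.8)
The plan is to mirror the proof of Theorem \ref{thm:I1I2forK} essentially verbatim, since the only difference between $\mathbf{H}^{(v,h)}$ and $\mathbf{K}^{(v,h)}$ is the order in which the two inserted bits ($0$ near the front, $1$ near the back) migrate past the original sequence; the endpoints of the migration and the global invariant count are the same. First I would take the explicit description of the action of $H^{(v,h)}_i$ on $\mathbf{y} := y_{-v} y_{-(v-1)} \dots y_h$ already displayed in the excerpt as the starting point. For condition (I1), I would walk through the consecutive pairs $H^{(v,h)}_j, H^{(v,h)}_{j+1}$ in the three ranges that the interleaved reduced expression naturally splits into: (a) the odd-to-even steps where the trailing $1$ hops left past one original bit $y_k$, where $H^{(v,h)}_j(\mathbf{y}) \neq H^{(v,h)}_{j+1}(\mathbf{y})$ iff $y_k = 0$, and in that case exactly one new inversion (the pair $(k, \text{position of }1)$) is created so $\mathrm{inv}$ increases by $1$; (b) the even-to-odd steps where the leading $0$ hops right past one original bit $y_k$, where the sequences differ iff $y_k = 1$, and again exactly one inversion is added; and (c) the single middle transition where the $0$ and $1$ cross each other, which is always a genuine change and adds exactly one inversion. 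In each case the verification is the same one-line bookkeeping as in Theorem \ref{thm:I1I2forK}, so I would say ``it is easy to check'' rather than grind it out.

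For condition (I2), I would compute $\mathrm{inv} \circ H^{(v,h)}_0(\mathbf{y})$ and $\mathrm{inv} \circ H^{(v,h)}_{2(v+h)+1}(\mathbf{y})$ directly. Note $H^{(v,h)}_0(\mathbf{y}) = 0\, \mathbf{y}\, 1$ and $H^{(v,h)}_{2(v+h)+1}(\mathbf{y}) = 1\, \mathbf{y}\, 0$ — these are literally the same two sequences as $K^{(v,h)}_0(\mathbf{y})$ and $K^{(v,h)}_{2(v+h)+1}(\mathbf{y})$ from the previous section. Hence the computation there carries over unchanged: $\mathrm{inv}(0\,\mathbf{y}\,1) = \mathrm{inv}(\mathbf{y})$ because the prepended $0$ and appended $1$ create no new descents, while $\mathrm{inv}(1\,\mathbf{y}\,0) = \mathrm{inv}(\mathbf{y}) + (v+h+1) + 1$ since the leading $1$ dominates every later bit position except the final $0$ (contributing the $v+h+1$ original positions plus the pair with the trailing $0$), and the trailing $0$ is dominated by every earlier bit. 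Subtracting gives $S = v+h+2$, independent of $\mathbf{y}$.

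The main (and really only) obstacle is purely notational: one must be careful with the interleaved indexing of $\mathcal{H}^{(v,h)}_i$ — the generators alternate between the ``left'' family $s_{-v+i}$ and the ``right'' family $s_{h-i}$, and the second half runs the list in reverse — so matching each index $j$ to the correct original bit $y_k$ being passed requires keeping the parity of $j$ and the position of the $0$ and $1$ straight. Once the displayed action table in the excerpt is accepted (which itself follows from a routine induction on $i$ using the defining recursions for $\mathcal{H}^{(v,h)}_i$ together with the effect of each $s_k$ as an adjacent transposition on the relevant two coordinates), the rest is identical to Theorem \ref{thm:I1I2forK}. I would therefore present the proof as ``by the same argument as in the proof of Theorem \ref{thm:I1I2forK}, applied to the action table displayed above,'' spelling out only the three (I1) cases and the two (I2) endpoint computations.
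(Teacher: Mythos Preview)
Your proposal is correct and matches the paper's treatment exactly: the paper gives no independent proof of this theorem at all, but simply says it follows ``by similar argument to prove Theorem \ref{thm:I1I2forK}'' applied to the displayed action table for $H^{(v,h)}_i$. Your plan---case-split (I1) according to which of the two inserted bits is hopping, observe that $H^{(v,h)}_0$ and $H^{(v,h)}_{2(v+h)+1}$ coincide with $K^{(v,h)}_0$ and $K^{(v,h)}_{2(v+h)+1}$ so the (I2) endpoint computation is literally the same---is precisely that similar argument, spelled out in more detail than the paper itself provides.
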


\begin{remark}
Readers may find that any reduced expression of
the reflection for the highest coroot provides us with
generalized insertions.
Once we fix a reduced expression,
observation of the action on bit-sequence
helps us to prove (I1) and (I2).
For preparation of observation,
a characterization of reduced expressions is required.
Since such a characterization is far from the main aim of this paper,
the paper focuses on only two reduced expressions.
\end{remark}

As opposite operations to BADs,
we call two bits 01 or 10 insertions balanced adjacent insertions (BAIs).
Composition of the azby permutation $\sigma$ will clear
a connection between the generalized insertions $H^{(v,h)}_i$ and BAIs.
\begin{align*}
\sigma \circ H^{(v,h)}_0 (\mathbf{y}) &= 0 1 y_{-v} y_{h} y_{-(v-1)} y_{h-1} \dots,\\
\sigma \circ H^{(v,h)}_{2(v+h)+1} (\mathbf{y}) &= 1 0 y_{-v} y_{h} y_{-(v-1)} y_{h-1} \dots.\\
\sigma \circ H^{(v,h)}_1 (\mathbf{y}) &= y_{-v} 1 0 y_{h} y_{-(v-1)} y_{h-1} \dots,\\
\sigma \circ H^{(v,h)}_{2(v+h)}   (\mathbf{y}) &= y_{-v} 0 1 y_{h} y_{-(v-1)} y_{h-1} \dots,\\
\sigma \circ H^{(v,h)}_2 (\mathbf{y}) &= y_{-v} h_{h} 0 1 y_{-(v-1)} y_{h-1} \dots,\\
\sigma \circ H^{(v,h)}_{2(v+h)-1}   (\mathbf{y}) &= y_{-v} y_{h} 1 0 y_{-(v-1)} y_{h-1} \dots,\\
\sigma \circ H^{(v,h)}_3 (\mathbf{y}) &= y_{-v} y_{h} y_{-(v-1)} 1 0 y_{h-1} \dots,\\
\sigma \circ H^{(v,h)}_{2(v+h)-2} (\mathbf{y}) &= y_{-v} y_{h} y_{-(v-1)} 0 1 y_{h-1} \dots,\\
                         & \vdots\\
\end{align*}

Therefore
Theorem \ref{thm:BADsPerfect} is obtained.

\section{Similarity of Cardinalities}\label{sec:SimCard}
This paper discussed
insertion, deletion and perfectness related to minuscule elements
of type $B$ and $A$,
in particular,
Levenshtein codes $\mathbb{L}_{n,a}$, lattice paths $\mathbb{Y}_{v,h,a}$
and sets $\mathbb{B}_{v, h, a}$.
Here a similarity among the cardinalities of these sets are shown.

In this section, $\mu$ denotes a m\"obius function,
$\phi$ the Euler's totient function,
$(d,a)$ the greatest common divisor of $d$ and $a$.

A formula for the cardinality of $\mathbb{L}_{n,a}$ has been obtained by Ginzburg
and independently by Stanley and Yoder and by Sloane.
\begin{fact}
$$
\# \mathbb{L}_{n, a},
= \frac{1}{2 (n+1)} \sum_{ d : \text{ odd }, d | n+1} \mu( \frac{d}{(d, a)}) \frac{ \phi( d ) }{ \phi( \frac{d}{(d, a)} )} 2^{\frac{n+1}{d} }.
$$
(See \cite{ginzburg1967certain,stanley1972study,sloane2000single}.)
\end{fact}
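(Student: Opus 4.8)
The plan is to evaluate $\#\mathbb{L}_{n,a}$ by a roots-of-unity filter, using the generating function $\sum_{\mathbf{x}\in\{0,1\}^n}X^{\rho(\mathbf{x})}=\prod_{1\le i\le n}(1+X^{i})$ that was computed in the Example of \S\ref{sec:MomentLevLeng}. Put $\omega:=e^{2\pi i/(n+1)}$, a primitive $(n+1)$-th root of unity. Since the indicator of $\rho(\mathbf{x})\equiv a\pmod{n+1}$ is $\frac{1}{n+1}\sum_{j=0}^{n}\omega^{j(\rho(\mathbf{x})-a)}$, interchanging sums gives
$$
\#\mathbb{L}_{n,a}=\frac{1}{n+1}\sum_{j=0}^{n}\omega^{-ja}\prod_{k=1}^{n}\bigl(1+\omega^{jk}\bigr),
$$
so the whole problem reduces to computing the character product $\prod_{k=1}^{n}(1+\zeta^{k})$ at each $\zeta=\omega^{j}$.

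Next I would evaluate this product in terms of the multiplicative order $d$ of $\zeta=\omega^{j}$, which is a divisor of $n+1$; then $\zeta$ is a primitive $d$-th root of unity. From $\prod_{k=1}^{d}(X-\zeta^{k})=X^{d}-1$, setting $X=-1$ yields $\prod_{k=1}^{d}(1+\zeta^{k})=1-(-1)^{d}$. Since $d\mid n+1$ and $\zeta^{n+1}=1$, grouping the $k$'s in blocks of length $d$ and dividing off the spurious $k=n+1$ factor gives
$$
\prod_{k=1}^{n}(1+\zeta^{k})=\frac{\bigl(1-(-1)^{d}\bigr)^{(n+1)/d}}{1+\zeta^{n+1}}=\frac{\bigl(1-(-1)^{d}\bigr)^{(n+1)/d}}{2},
$$
which is $0$ when $d$ is even and $2^{(n+1)/d-1}$ when $d$ is odd. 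Hence only indices $j$ of odd order survive.

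Then I would regroup the outer sum by this order. For a fixed odd divisor $d$ of $n+1$, the $j\in\{0,\dots,n\}$ of order exactly $d$ are $j=\frac{n+1}{d}t$ with $0\le t<d$ and $\gcd(t,d)=1$, and for these $\omega^{-ja}=\zeta_{d}^{-ta}$ with $\zeta_{d}:=e^{2\pi i/d}$. Therefore
$$
\#\mathbb{L}_{n,a}=\frac{1}{n+1}\sum_{\substack{d\mid n+1\\ d\text{ odd}}}2^{(n+1)/d-1}\sum_{\substack{0\le t<d\\ \gcd(t,d)=1}}\zeta_{d}^{-ta}.
$$
The inner sum is Ramanujan's sum $c_{d}(a)$; from the block decomposition $\{0,\dots,d-1\}=\bigsqcup_{e\mid d}\{(d/e)t:0\le t<e,\ \gcd(t,e)=1\}$ one gets $\sum_{e\mid d}c_{e}(a)=d$ or $0$ according as $d\mid a$ or not, and Möbius inversion then yields the closed form $c_{d}(a)=\mu\bigl(d/(d,a)\bigr)\,\phi(d)/\phi\bigl(d/(d,a)\bigr)$. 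Substituting this and absorbing the $2^{-1}$ into the prefactor produces exactly the stated formula.

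I expect the only real work to be the bookkeeping in the middle steps: correctly handling the missing $k=n+1$ factor, verifying that even orders $d$ contribute nothing, and parametrizing the indices of each given order so that the coefficient of $2^{(n+1)/d-1}$ emerges as precisely a Ramanujan sum. The single external ingredient is the evaluation $c_{d}(a)=\mu(d/(d,a))\,\phi(d)/\phi(d/(d,a))$, which one can either cite or re-derive in two lines by the Möbius inversion indicated above; everything else is elementary manipulation of finite sums.
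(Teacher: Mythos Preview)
Your argument is correct. The paper, however, does not prove this statement at all: it is recorded as a \emph{Fact} with references to Ginzburg, Stanley--Yoder, and Sloane, and no proof is supplied. So there is no ``paper's own proof'' to compare against.

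That said, your approach is exactly the one the paper adopts immediately afterwards for the companion formula $\#\mathbb{Y}_{v,h,a}$ (the theorem following the Fact). There the paper sets $f(q)=\sum_a \#\mathbb{Y}_{v,h,a}\,q^a$, reduces it modulo $q^{v+h}-1$ to a $q$-binomial, evaluates at primitive roots $\zeta^j$, regroups by $d=(j,v+h)$, and identifies the coefficient $\gamma(d,a)=\sum_{(j,v+h)=d}\zeta^{ja}$ with $\mu(d/(d,a))\,\phi(d)/\phi(d/(d,a))$ --- precisely your Ramanujan-sum step. The only feature specific to the type-$B$ case that you handled and the paper did not need is the parity dichotomy: your evaluation $\prod_{k=1}^{d}(1+\zeta^k)=1-(-1)^d$ is what kills the even divisors and produces the extra factor of $\tfrac{1}{2}$ in front. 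In the type-$A$ analogue the $q$-binomial evaluated at a root of unity is handled by a standard identity instead, and no parity split arises. Otherwise the two arguments are the same roots-of-unity filter, so your write-up would slot in as the missing proof with no change of method.
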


On the other hand, a similar formula for $\# \mathbb{B}_{v, h, a}$ and $\# \mathbb{Y}_{v, h, a}$ to $\# \mathbb{L}_{n, a}$ is obtained
as follows.
\begin{theorem}
$$
\# \mathbb{B}_{v, h, a}
=
\# \mathbb{Y}_{v, h, a}
= \frac{1}{v + h} \sum_{d | (v + h, v)} \mu(  \frac{d}{(d, a)} ) \dfrac{\phi( d )}{\phi(  \frac{d}{(d, a)} )} \binom{(v + h)/d}{v/d} 
$$
\end{theorem}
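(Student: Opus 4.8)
The equality $\#\mathbb{B}_{v,h,a}=\#\mathbb{Y}_{v,h,a}$ will be the easy half: the azby permutation $\sigma$ only permutes coordinates, so it is a weight-preserving bijection of $\{0,1\}^{v+h}$ and therefore restricts to a bijection between the two sets. The substantive task is to compute $\#\mathbb{Y}_{v,h,a}$, and the plan is to run a roots-of-unity filter on the inversion generating function and then recognize the resulting sums of roots of unity as Ramanujan sums.

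First I would record the generating function. Combining the bijections $g_{01}$ and $g_M$ with the Lemma of \S\ref{consTypeA} identifying $\mathrm{inv}(\mathbf{x}_J)$ with $l(w_J)$, and then forming the quotient of Poincar\'e polynomials of $W(A_{v+h-1})$ by its parabolic subgroup $W_0(A_{v,h})\cong W(A_{v-1})\times W(A_{h-1})$ exactly as in the Example of \S\ref{sec:MomentLevLeng}, one obtains
\[
\sum_{\mathbf{x}\in\mathbb{Y}_{v,h}} q^{\mathrm{inv}(\mathbf{x})}=\binom{v+h}{v}_q,
\]
the Gaussian binomial coefficient. Writing $n:=v+h$ and $\zeta:=\exp(2\pi\sqrt{-1}/n)$, the standard roots-of-unity filter then gives
\[
\#\mathbb{Y}_{v,h,a}=\frac{1}{n}\sum_{j=0}^{n-1}\zeta^{-ja}\binom{n}{v}_{\zeta^j}.
\]

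The core step is the evaluation of $\binom{n}{v}_q$ at a primitive $d$-th root of unity $\omega$. The $q$-Lucas congruence gives $\binom{n}{v}_\omega=\binom{\lfloor n/d\rfloor}{\lfloor v/d\rfloor}\binom{n\bmod d}{v\bmod d}_\omega$; since here $d\mid n$ automatically, this collapses to $\binom{n/d}{v/d}$ when $d\mid v$ and to $0$ otherwise. I would then group the filter index by $d:=n/(j,n)$ and write $j=(n/d)k$ with $(k,d)=1$, so that the sum of $\zeta^{-ja}$ over each group becomes $\sum_{(k,d)=1}e^{-2\pi\sqrt{-1}\,ka/d}=c_d(a)$, the Ramanujan sum (which is even in $a$). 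This yields
\[
\#\mathbb{Y}_{v,h,a}=\frac{1}{v+h}\sum_{d\mid(v+h,\,v)}\binom{(v+h)/d}{v/d}\,c_d(a),
\]
and substituting the classical closed form $c_d(a)=\mu(d/(d,a))\,\phi(d)/\phi(d/(d,a))$ of the Ramanujan sum gives exactly the asserted formula.

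The main obstacle --- really the only step that is not pure bookkeeping --- is the evaluation of $\binom{n}{v}_q$ at roots of unity. I would either cite the $q$-Lucas theorem, or argue directly: writing
\[
\binom{n}{v}_q=\frac{\prod_{i=1}^{n}(1-q^i)}{\prod_{i=1}^{v}(1-q^i)\,\prod_{i=1}^{h}(1-q^i)},
\]
one counts the factors $1-q^i$ that vanish at $\omega$, namely those with $d\mid i$: the numerator has $n/d$ of them and the denominator has $\lfloor v/d\rfloor+\lfloor h/d\rfloor$, and because $d\mid n$ these two counts agree precisely when $d\mid v$, while otherwise the numerator has exactly one extra vanishing factor. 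Hence the limit is $0$ unless $d\mid v$, in which case cancelling the vanishing factors and evaluating the surviving product produces $\binom{n/d}{v/d}$. It is this dichotomy, governed purely by the condition $d\mid v$ with no parity constraint --- in contrast with the Levenshtein case, where the $q=-1$ specialization forces $d$ to be odd --- that explains why the final answer is a divisor sum over $d\mid(v+h,v)$. The passage from the grouped sums of roots of unity to the Ramanujan sums $c_d(a)$ and the closed form of $c_d(a)$ are entirely classical, and I would simply cite them.
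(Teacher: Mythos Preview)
Your proposal is correct and follows essentially the same route as the paper: both identify the inversion generating function on $\mathbb{Y}_{v,h}$ with the Gaussian binomial $\binom{v+h}{v}_q$, apply a roots-of-unity filter, evaluate the Gaussian binomial at roots of unity (you via $q$-Lucas, the paper by direct assertion), and then recognize the grouped root-of-unity sums as Ramanujan sums $c_d(a)=\mu(d/(d,a))\phi(d)/\phi(d/(d,a))$, citing Stanley--Yoder for the closed form. The only cosmetic difference is that the paper packages the filter via auxiliary interpolating polynomials $H_d(q)$ rather than writing the filter sum directly, but the content is identical.
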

\begin{proof}
The equality between $\# \mathbb{B}_{v, h, a}$ and $\# \mathbb{Y}_{v, h, a}$
is obvious since the definition of $\mathbb{B}_{v, h, a}$ is
$$
\mathbb{B}_{v, h, a} := \{ \sigma \mathbf{x} \mid \mathbf{x} \in \mathbb{Y}_{v, h, a} \}.
$$

Our strategy to show the second equality is similar to \cite{stanley1972study}.
Set a polynomial $f$ with a variable $q$ as
$$f(q) := \sum_{0 \le a < h + v} \# \mathbb{Y}_{v, h, a} q^a.$$
By the definition of $\mathbb{Y}_{v,h,a}$, the following holds
\begin{align*}
f(q) & = \sum_{\mathbf{x} \in \mathbb{Y}_{v, h}} q^{  \mathrm{inv} ( \mathbf{x} ) \pmod{ v + h}  } \\
     & \equiv \sum_{\mathbf{x} \in \mathbb{Y}_{v, h}} q^{  \mathrm{inv} ( \mathbf{x} ) } \pmod{ q^{v+h} - 1}\\
     & = \begin{bmatrix} v+h \\ h \end{bmatrix},
\end{align*}
where $\begin{bmatrix} v+h \\ h \end{bmatrix}$ is a $q$-binomial.
Therefore 
for a $(v+h)$th primitive root $\zeta$ of $1$
and an integer $j$,
$$
f( \zeta^j ) = 
\begin{cases}
 \binom{ (v + h)/(j, v+h) }{ h / (j, v+h)  } & \text{if $(j, v+h) | (h, v)$,}\\
 0                                        & \text{otherwise.}
\end{cases}
$$

Set a polynomial $H_d(q)$ with $\mathrm{deg} H_d (q) < v + h$ as
$$
H_d (q) := \frac{1}{v+h}
   \sum_{0 \le i < v+h} \gamma(d,i) q^{i},
$$
where $\gamma(d,i) = \sum_{0 \le j < v+h, (j,v+h) = d} \zeta^{ji}$.
It is easy to rewrite as
\begin{align*}
H_d (q)
 &= \frac{1}{v+h} \sum_{0 \le j < v+h, (j,v+h) = d} \sum_{0 \le j < v+h} (\zeta^j q)^i\\
 &= \frac{1}{v+h} \sum_{0 \le j < v+h, (j,v+h) = d} \zeta^{j}
                 \frac{q^{v+h} - 1}{ q - \zeta^j }.
\end{align*}
Therefore for any integer $j$,
$$H_d ( \zeta^j ) = 
\begin{cases}
1 & \text{if $(j, v+h) = d$,}\\
0 & \text{otherwise.}
\end{cases}
$$

It implies that
\begin{align*}
f(q) &= \sum_{d | (v,h)} \binom{(h+v)/d}{v/d} H_d (q) \\
     &= \frac{1}{v+h} \sum_{0 \le a < h+v} \left( \sum_{d | (v,h) } \gamma(d,a) \binom{ (h+v)/d }{ v/d }  \right) q^a .
\end{align*}
By the same argument in \cite{stanley1972study},
$\gamma(d, a) = \mu( \dfrac{d}{(d,a)} ) \dfrac{ \phi(d) }{ \phi( \dfrac{d}{(d,a)} )}$.
It concludes this proof.
\end{proof}

\section{Similarity of Insertion Sphere}\label{sec:SimInsSphere}
This section discusses the size of insertion sphere with
three insertions that have already appeared in this paper.

For a non-negative integer $t \ge 0$,
 a binary sequence $\mathbf{x} \in \{ 0, 1 \}^n$,
let us define the (standard) insertion sphere of degree $t$ as
$$
\mathrm{iS}^{(t)} ( \mathbf{x} ) :=
\begin{cases}
 \{ \mathbf{x} \} & \text{if } t = 0,\\
 \bigcup_{ \mathbf{y}  \in \mathrm{iS}^{(t-1)}( \mathbf{x} ) } \mathrm{iS}( \mathbf{y} ) & \text{otherwise}.
\end{cases}
$$
If we replace standard insertions with path insertions (resp. BAIs), $\mathrm{iS}^{(t)}_{PATH} (\mathbf{x})$
(resp. $\mathrm{iS}^{(t)}_{BA} (\mathbf{x})$)
denotes its insertion sphere.

The following implies that
the size of an insertion sphere for $\mathbf{x} \in \{0,1\}^n$ depends on only its length $n$
and its degree $t$ not but entries of $\mathbf{x}$.
\begin{fact}[Proposition 2 in \cite{d2006reverse}]\label{Thm:insCard}
For any integers $n, t \ge 0$ and any binary sequence $\mathbf{y} \in \{0, 1\}^n$,
$$
\# \mathrm{iS}^{(t)} (\mathbf{y}) = \sum_{0 \le i \le t} \binom{n+t}{i}.
$$
\end{fact}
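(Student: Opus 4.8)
The plan is to strip the statement down to a pure counting problem and then evaluate the count in two parallel ways.

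\textbf{Step 1 (reduction to supersequences).} First I would show, by induction on $t$ together with the insertion/deletion duality of Lemma \ref{lemma:dSandiS}, that
$$\mathrm{iS}^{(t)}(\mathbf{y}) = \{\, \mathbf{z} \in \{0,1\}^{n+t} \mid \mathbf{y} \text{ is a (not necessarily contiguous) subsequence of } \mathbf{z} \,\}.$$
Indeed $\mathrm{iS}(\mathbf{w})$ is exactly the set of length-$(|\mathbf{w}|+1)$ supersequences of $\mathbf{w}$, and conversely any length-$(n+t)$ supersequence of $\mathbf{y}$ is reached by inserting its $t$ surplus symbols one at a time. So $\#\mathrm{iS}^{(t)}(\mathbf{y})$ is the number of binary words of length $n+t$ admitting $\mathbf{y}$ as a subsequence.

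\textbf{Step 2 (canonical form via the greedy embedding).} This is the heart of the argument. Given $\mathbf{z}$ containing $\mathbf{y}=y_1\cdots y_n$, let $p_1<\cdots<p_n$ be the leftmost embedding: $p_j$ is the first position of $\mathbf{z}$ after $p_{j-1}$ carrying the symbol $y_j$ (set $p_0:=0$, $p_{n+1}:=n+t+1$). Minimality forces every symbol of $\mathbf{z}$ strictly between $p_{j-1}$ and $p_j$ to differ from $y_j$; since the alphabet is binary this symbol equals $1-y_j$ and is \emph{determined}. Only the symbols after $p_n$ are free. Writing $g_j$ for the number of positions strictly between $p_j$ and $p_{j+1}$, one has $g_0+\cdots+g_n=t$, and the data $\bigl((g_0,\dots,g_n),\mathbf{u}\bigr)$ with $\mathbf{u}\in\{0,1\}^{g_n}$ the tail after $p_n$ recovers $\mathbf{z}$ uniquely; conversely every such datum arises, by building $\mathbf{z}=(1-y_1)^{g_0}\,y_1\,(1-y_2)^{g_1}\,y_2\cdots y_n\,\mathbf{u}$ and checking its leftmost embedding is the prescribed one. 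This bijection gives
$$\#\mathrm{iS}^{(t)}(\mathbf{y}) = \sum_{\substack{g_0,\dots,g_n\ge 0\\ g_0+\cdots+g_n=t}} 2^{g_n} = \sum_{s=0}^{t} 2^{s}\binom{n-1+t-s}{n-1},$$
the last step being stars-and-bars for the $n$ parts $g_0,\dots,g_{n-1}$ summing to $t-s$.

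\textbf{Step 3 (matching the right-hand side).} It remains to check that $\sum_{i=0}^{t}\binom{n+t}{i}$ equals this sum, which I would do combinatorially to keep the parallel clean: $\sum_{i=0}^{t}\binom{n+t}{i}$ counts binary words of length $n+t$ with at most $t$ ones, equivalently with at least $n$ zeros; decomposing such a word around its first $n$ zeros as $1^{a_0}0\,1^{a_1}0\cdots 0\,\mathbf{u}$ with $a_0,\dots,a_{n-1}\ge 0$ and $\mathbf{u}$ arbitrary, the length condition reads $a_0+\cdots+a_{n-1}+|\mathbf{u}|=t$, reproducing $\sum_{a_0+\cdots+a_{n-1}+j=t}2^{j}$, the same quantity. (Alternatively both sides equal $[x^t]\,\frac{1}{(1-x)^n(1-2x)}$, or one verifies both satisfy $F(n,t)=F(n-1,t)+F(n,t-1)$ with $F(n,0)=1$ and $F(1,t)=2^{t+1}-1$.) Combining the three steps proves the Fact. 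The only delicate point is Step 2 — making the greedy-embedding correspondence genuinely well defined and bijective, in particular checking that the word reconstructed from $\bigl((g_0,\dots,g_n),\mathbf{u}\bigr)$ really has the prescribed positions as its leftmost embedding, and noticing that binariness is precisely what collapses each middle block to a single choice so that no extra factors intrude; the rest is routine binomial bookkeeping.
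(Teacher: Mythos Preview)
The paper does not supply its own proof of this statement: it is recorded as a \emph{Fact} with a citation to \cite{d2006reverse} (and a remark attributing the original result to Levenshtein), so there is no in-paper argument to compare against.

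Your argument is correct and complete. Step~1 is the standard identification of $\mathrm{iS}^{(t)}(\mathbf{y})$ with the set of length-$(n+t)$ supersequences of $\mathbf{y}$; the one-step case is exactly Lemma~\ref{lemma:dSandiS} read contrapositively, and the induction is routine. Step~2 --- the leftmost-embedding bijection --- is the substantive idea and is carried out cleanly: greediness forces each pre-$p_j$ block to be the constant $(1-y_j)$-string, so the only free data are the gap lengths $(g_0,\dots,g_n)$ and the arbitrary tail of length $g_n$; you correctly note the verification that the reconstructed word has the prescribed leftmost embedding, which is where binariness is essential. Step~3 is a valid identity (the ``first $n$ zeros'' decomposition is precisely the $\mathbf{y}=\mathbf{0}^n$ instance of your own Step~2 bijection, which is a pleasant internal check), and the alternative generating-function or recurrence verifications you mention also work. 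The only cosmetic caveat is the boundary case $n=0$, where the binomial $\binom{n-1+t-s}{n-1}$ should be read as the indicator of $s=t$; you may wish to state that explicitly.
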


\begin{remark}
To the best of the author's knowledge,
it is said that 
Theorem \ref{Thm:insCard} was firstly proven in a paper \cite{levenshtein1974elements}.
Since the author could not acquire the paper,
another paper \cite{d2006reverse} is cited here.
In \cite{d2006reverse},
Theorem \ref{Thm:insCard} is proven as a more general statement:
a sequence is not restricted to a binary sequence but a sequence over any finite set.
\end{remark}

One of questions is what happens to the fact 
if $\mathrm{iS}^{(t)}_{PATH} (\mathbf{x})$ and $\mathrm{iS}^{(t)}_{BAI} (\mathbf{x})$ 
are considered.
Answers are Theorems \ref{thm:SizeISpath} and \ref{thm:SizeISBAI}.

\begin{theorem}\label{thm:SizeISpath}
For any non-negative integers $n$ and $t$,
and $\mathbf{x} \in \{0,1\}^n$,
\begin{itemize}
\item $\# \mathrm{iS}^{(0)}_{PATH} (\mathbf{x}) = 1$,
\item $\# \mathrm{iS}^{(1)}_{PATH} (\mathbf{x}) = n + 2$,
\item $\# \mathrm{iS}^{(2)}_{PATH} (00) \neq \mathrm{iS}^{(2)}_{PATH} (01)$.
\end{itemize}
\end{theorem}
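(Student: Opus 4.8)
The plan is to handle the three bullets separately: the first is definitional, the second reduces to the known single‑insertion count, and the third is a finite enumeration that deliberately contrasts with Fact~\ref{Thm:insCard}.

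For the first bullet, $\mathrm{iS}^{(0)}_{PATH}(\mathbf{x}) = \{\mathbf{x}\}$ directly from the recursive definition, so its cardinality is $1$. For the second bullet I would first recall from Section~\ref{sec:perfectTypeA} that a path insertion applied to $\mathbf{x}=x_1\cdots x_n$ prepends a bit $b\in\{0,1\}$ and inserts the complementary bit $1-b$ into one of the $n+1$ gaps of $\mathbf{x}$; thus its output is $b\mathbf{z}$, where $\mathbf{z}$ ranges over all sequences obtained from $\mathbf{x}$ by a single insertion of $1-b$. Since prepending a fixed bit is injective and the $b=0$ and $b=1$ outputs begin with different symbols, $\#\mathrm{iS}^{(1)}_{PATH}(\mathbf{x})$ equals $N_0(\mathbf{x})+N_1(\mathbf{x})$, where $N_c(\mathbf{x})$ counts the distinct sequences obtained from $\mathbf{x}$ by a single insertion of $c$. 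But the single‑$0$‑insertions and single‑$1$‑insertions of $\mathbf{x}$ have Hamming weights $\mathrm{wt}(\mathbf{x})$ and $\mathrm{wt}(\mathbf{x})+1$, hence are disjoint, and their union is exactly the standard single‑insertion sphere $\mathrm{iS}^{(1)}(\mathbf{x})$; by Fact~\ref{Thm:insCard} (with $t=1$) the latter has $\binom{n+1}{0}+\binom{n+1}{1}=n+2$ elements, which gives the claim.

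For the third bullet, the key observation is that a path insertion inserts one $0$ and one $1$, hence increases the Hamming weight by exactly $1$; so $\mathrm{iS}^{(2)}_{PATH}(\mathbf{x})$ consists of weight‑$(\mathrm{wt}(\mathbf{x})+2)$ sequences of length $n+4$. For $\mathbf{x}=00$ this confines $\mathrm{iS}^{(2)}_{PATH}(00)$ to the $\binom{6}{2}=15$ weight‑$2$ words of length $6$, and I would then check by direct enumeration---starting from $\mathrm{iS}^{(1)}_{PATH}(00)=\{0100,0010,0001,1000\}$ and applying one more round of path insertions---that all $15$ of them occur, so $\#\mathrm{iS}^{(2)}_{PATH}(00)=15$. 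For $\mathbf{x}=01$ the analogous enumeration, starting from $\mathrm{iS}^{(1)}_{PATH}(01)=\{0101,0011,1001,1010\}$, lands on a proper subset of the $\binom{6}{3}=20$ weight‑$3$ words (for instance $111000$ is never produced, since its only path‑predecessor of length $4$ is $1100\notin\mathrm{iS}^{(1)}_{PATH}(01)$) of size $16$. Since $15\neq 16$, the two spheres differ in size.

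Bullets (i)--(ii) carry essentially no risk, as they bottom out at Fact~\ref{Thm:insCard} once one records that a path insertion is ``prepend a bit, insert its complement.'' The only delicate part is the bookkeeping in bullet (iii): one must be sure that $\mathrm{iS}^{(2)}_{PATH}(00)$ really exhausts all $15$ weight‑$2$ words, and that $\mathrm{iS}^{(2)}_{PATH}(01)$ lands on exactly $16$ (rather than, say, $15$) of the $20$ weight‑$3$ words. This is where a miscount is most likely, so I would spell out both enumerations---or, equivalently, argue on the reverse (deletion) side which length‑$6$ words admit a chain of two path‑deletions back to $00$ resp. $01$.
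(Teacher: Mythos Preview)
Your proposal is correct and follows essentially the same route as the paper: the $t=0$ case is definitional, the $t=1$ case is handled by splitting on the prepended bit and reducing to the standard single-insertion count $n+2$ from Fact~\ref{Thm:insCard}, and the $t=2$ case is a direct enumeration yielding $15$ versus $16$. Your weight observation (that $\mathrm{iS}^{(2)}_{PATH}(00)$ is confined to the $\binom{6}{2}=15$ weight-$2$ words) is a tidy shortcut for the upper bound that the paper does not make explicit, but otherwise the arguments coincide.
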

\begin{proof}
For a case $t=0$, it follows from
the definition of $\# \mathrm{iS}^{(0)}_{PATH}$.
$$\# \mathrm{iS}^{(0)}_{PATH} (\mathbf{x}) = \# \{ \mathbf{x} \} = 1.$$

Next, for a case $t=1$, 
\begin{align*}
& \# \mathrm{iS}^{(1)}_{PATH} (\mathbf{x}) \\
&= \#\{ \mathbf{y} \mid \mathbf{y} \text{ is obtained by single path insertion to }  \mathbf{x} \}\\
&= \#\{ \mathbf{y} \mid y_{-v} = 0, \mathbf{y} \text{ is obtained by single path insertion to }  \mathbf{x} \}\\
&\;\;\;\; + \#\{ \mathbf{y} \mid y_{-v} = 1, \mathbf{y} \text{ is obtained by single path insertion to }  \mathbf{x} \}\\
&= \#\{ 0 \mathbf{z} \mid \mathbf{z} \text{ is obtained by single $1$ insertion to }   \mathbf{x} \}\\
&\;\;\;\; + \#\{ 1 \mathbf{z} \mid \mathbf{z} \text{ is obtained by single $0$ insertion to }   \mathbf{x} \}\\
&= \#\{ \mathbf{z} \mid \mathbf{z} \text{ is obtained by single $1$ insertion to }   \mathbf{x} \}\\
&\;\;\;\; + \#\{ \mathbf{z} \mid \mathbf{z} \text{ is obtained by single $0$ insertion to }   \mathbf{x} \}\\
&= \#\{ \mathbf{z} \mid \mathbf{z} \text{ is obtained by single standard insertion to }  \mathbf{x} \}\\
&= n+2.
\end{align*}
The last equality follows from Fact \ref{Thm:insCard}.

At the last, for a case $t=2$,
$$
\# \mathrm{iS}^{(2)}_{PATH} (00) = 15 \neq 16 = \mathrm{iS}^{(2)}_{PATH} (01).
$$
Furthermore,
$
\mathrm{iS}^{(2)}_{PATH} (00)
=
\{110000, 101000, 100100, 100010, 100001, \allowbreak 011000, \allowbreak 010100, \allowbreak
 010010,\allowbreak 010001,\allowbreak 001100, 001010, 001001, 000110, 000101, 000011 \},
$
and
$
\mathrm{iS}^{(2)}_{PATH} (01)
=
\{
010101, 001101, 001011,011001, 011010, 010110,\allowbreak 010011,\allowbreak 000111,\allowbreak
100101,\allowbreak 101001,\allowbreak 101010,
100011, 100110, 110001, 110010, 110100
\}.
$
\end{proof}

\begin{theorem}\label{thm:SizeISBAI}
For any non-negative integers $n$ and $t$,
and $\mathbf{x} \in \{0,1\}^n$,
$$\# \mathrm{iS}^{(t)}_{BA} (\mathbf{x}) = \binom{n + 2t}{t}.$$
\end{theorem}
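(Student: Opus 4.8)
The plan is to characterise $\mathrm{iS}^{(t)}_{BA}(\mathbf{x})$ structurally, reduce the counting to the all-zero word $0^{n}$, and handle the general $\mathbf{x}$ by showing the size cannot depend on which bits $\mathbf{x}$ has.

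First I would prove a structural description of the sphere. A balanced adjacent insertion inserts an adjacent pair $01$ or $10$, and dually a $\mathrm{BAD}$ deletes such a pair, so $\mathbf{y}\in\mathrm{iS}^{(t)}_{BA}(\mathbf{x})$ iff $\mathbf{x}$ is obtained from $\mathbf{y}$ by $t$ $\mathrm{BAD}$s. The key lemma is that the $2t$ positions of $\mathbf{y}$ removed by a sequence of $t$ $\mathrm{BAD}$s always form a disjoint union of intervals, each of which spells a \emph{balanced} word (equal numbers of $0$s and $1$s): this is an easy induction on the number of $\mathrm{BAD}$s, the inductive step noting that a new $\mathrm{BAD}$ either creates a fresh balanced length-$2$ interval or merges existing intervals across a balanced pair, preserving balancedness. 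Since the kept positions are exactly the $n$ positions carrying $\mathbf{x}$, the removed intervals are precisely the (possibly empty) contents of the $n+1$ gaps of $\mathbf{x}$. Hence $\mathbf{y}\in\mathrm{iS}^{(t)}_{BA}(\mathbf{x})$ iff $\mathbf{y}=u_{0}x_{1}u_{1}x_{2}\cdots x_{n}u_{n}$ for some balanced words $u_{0},\dots,u_{n}$ with $\sum_{i}|u_{i}|=2t$; the converse direction holds because a balanced word is itself reachable from the empty word by $\mathrm{BAD}$s in reverse, i.e.\ by $\mathrm{BAI}$s.

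Next I would count. For $\mathbf{x}=0^{n}$ the description is immediate: every sphere element has exactly $t$ ones (each $\mathrm{BAI}$ adds one $0$ and one $1$), and conversely any word of length $n+2t$ with exactly $t$ ones reduces to $0^{n}$ by repeatedly deleting a $01$ or $10$ pair until no $1$ remains. Thus $\mathrm{iS}^{(t)}_{BA}(0^{n})=\{\mathbf{y}\in\{0,1\}^{n+2t}\mid \mathrm{wt}(\mathbf{y})=t\}$, which has $\binom{n+2t}{t}$ elements. It remains to show $\#\mathrm{iS}^{(t)}_{BA}(\mathbf{x})$ depends only on $n$ and $t$. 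For this I would introduce a canonical parsing: among all decompositions $u_{0}x_{1}u_{1}\cdots x_{n}u_{n}$ of $\mathbf{y}$, take the one pushing each occurrence of $x_{i}$ as far right as possible. One checks this is exactly the decomposition in which $u_{0}$ is an arbitrary balanced word while each $u_{i}$ $(i\ge1)$ is a Dyck word with opening letter $x_{i}$ (every prefix of $u_{i}$ has at least as many $x_{i}$'s as $\overline{x_{i}}$'s), and that conversely every such ``Dyck-tuple'' decomposition is the canonical one, so $\mathbf{y}\mapsto(u_{0};u_{1},\dots,u_{n})$ is a bijection. Since the number of Dyck words of length $2k$ is the Catalan number $C_{k}$ irrespective of which letter opens, $\#\mathrm{iS}^{(t)}_{BA}(\mathbf{x})=\sum_{k_{0}+\cdots+k_{n}=t}\binom{2k_{0}}{k_{0}}C_{k_{1}}\cdots C_{k_{n}}$, independent of the bits of $\mathbf{x}$; with the all-zero case this gives $\#\mathrm{iS}^{(t)}_{BA}(\mathbf{x})=\binom{n+2t}{t}$. (Alternatively the sum equals $[z^{t}]\,(1-4z)^{-1/2}\big((1-\sqrt{1-4z})/(2z)\big)^{n}$, which is $\binom{n+2t}{t}$ by Lagrange inversion with $z=w/(1+w)^{2}$.)

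The main obstacle is the canonical-parsing step: showing the ``push everything right'' decomposition exists and renders each $u_{i}$ $(i\ge1)$ a Dyck word, and that distinct Dyck-tuples give distinct $\mathbf{y}$. The latter reduces to proving that in a Dyck-tuple decomposition the position of each $x_{i}$ is forced to be the rightmost index $q$ with $y_{q}=x_{i}$ and $y_{1}\cdots y_{q-1}$ balanced; this is elementary but requires carefully tracking the balance statistic across several gaps at once and invoking the Dyck condition to exclude later candidate positions.
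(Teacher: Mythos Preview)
Your proposal is correct and follows essentially the same route as the paper's own proof: compute the base case $\mathbf{x}=0^{n}$ directly by identifying $\mathrm{iS}^{(t)}_{BA}(0^{n})$ with $\mathbb{Y}_{n+t,t}$, then set up a canonical Dyck-block decomposition of each $\mathbf{y}\in\mathrm{iS}^{(t)}_{BA}(\mathbf{x})$ and use the symmetry $\#\mathrm{DYC}(c,0)=\#\mathrm{DYC}(c,1)=C_{c}$ to conclude that the count does not depend on the bits of $\mathbf{x}$. The only difference is a mirror-image convention: the paper puts the Dyck blocks \emph{before} each letter $y_{i}$ (opening with $\bar y_{i}$) and leaves the free balanced block at the right end, via a ``push-left'' lemma $\mathbf{l}y=\mathbf{p}\,y\,\mathbf{l}'$, whereas you put the free balanced block on the left and the Dyck blocks after each $x_{i}$ via ``push-right''; your explicit ``balanced-intervals'' structural lemma makes precise a step the paper leaves implicit.
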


For proving Theorem \ref{thm:SizeISBAI},
Lemmas \ref{lemma:YnttinSp}, \ref{lem:DyckPathTransform}, and \ref{lem:DycPartition} are given.
\begin{lemma}\label{lemma:YnttinSp}
For any positive integers $n, t$ and
any $\mathbf{y} \in \mathbb{Y}_{n + t, t}$,
there exists BAIs $H_1, H_2, \dots, H_t$ such that
$$
\mathbf{y}
=
H_t \circ \dots \circ H_2 \circ H_1 (\mathbf{0}^n),
$$
where $\mathbf{0}^n = 00 \dots 0 \in \{ 0,1 \}^n$.
\end{lemma}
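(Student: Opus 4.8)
The plan is to induct on $t$. The base case $t = 1$ is essentially the content of Theorem~\ref{thm:I1I2forH}: any $\mathbf{y} \in \mathbb{Y}_{n+1, 1}$ is a sequence in $\{0,1\}^{n+1}$ of weight $1$; that is, $\mathbf{y}$ contains exactly one symbol $1$. Since $H^{(v,h)}_i$ is a BAI (inserting the pair $01$ or $10$), applying one BAI to $\mathbf{0}^{n-1} \in \mathbb{Y}_{n-1,0}$ — wait, I must be careful with the index bookkeeping. Here $\mathbf{0}^n \in \{0,1\}^n$ lies in $\mathbb{Y}_{n, 0}$, and each BAI increases $v$ by $1$ and $h$ by $1$, so after $t$ BAIs we land in $\mathbb{Y}_{n+t, t}$, which matches the statement. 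So for the base case $t=1$, I would check directly that every weight-$1$ sequence in $\{0,1\}^{n+1}$ arises from $\mathbf{0}^n$ by inserting a single $01$ or $10$ at the appropriate position: inserting $01$ just before position $k$ produces the weight-$1$ string with its $1$ in slot $k+1$, and inserting $10$ just after the block covers the boundary cases. This amounts to reading off the list of actions of the $H^{(v,h)}_i$ on a bit sequence displayed just before Theorem~\ref{thm:I1I2forH}, specialized to $\mathbf{y} = \mathbf{0}^{n}$.

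For the inductive step, suppose the claim holds for $t-1$, and let $\mathbf{y} \in \mathbb{Y}_{n+t, t}$, i.e. $\mathbf{y} \in \{0,1\}^{n+2t}$ of weight $t$. I want to peel off one BAI: I need to find a position and a pattern ($01$ or $10$) occurring in $\mathbf{y}$ as an adjacent pair that, when deleted, yields some $\mathbf{y}' \in \mathbb{Y}_{n+t-1, t-1}$ with the property that one of the specified BAIs $H_t$ sends $\mathbf{y}'$ to $\mathbf{y}$. Concretely, since $\mathbf{y}$ has weight $t \geq 1$ and also has $0$'s (as $n + 2t > t$ for $n \geq 1$), it contains an adjacent pair $01$ or $10$; deleting such a pair from $\mathbf{y}$ gives a weight-$(t-1)$ sequence of length $n + 2(t-1)$. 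The subtlety is that not every such deletion corresponds to one of the BAIs in the prescribed family $\mathbf{H}^{(n+t-1, t-1)}$ — the operators $H^{(v,h)}_i$ insert the balanced pair only at certain interleaved positions dictated by the reduced word $s_{-v}s_h s_{-(v-1)} s_{h-1} \cdots$. So I would instead argue via counting: by Theorem~\ref{thm:I1I2forH} the set $\mathbf{H}^{(v,h)}$ is a family of generalized insertions, so by Lemma~\ref{lemma:imf}(2) one has $\#\mathrm{iS}^{(1)}_{BA}(\mathbf{x}) = S + 1 = v + h + 3$ for $\mathbf{x} \in \mathbb{Y}_{v,h}$... but here the sphere is taken in the set $\mathbb{Y}$-world, not all bit sequences. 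The cleanest route is: show that the map $\mathbf{H}^{(n+t-1,t-1)}$ applied to $\mathbb{Y}_{n+t-1,t-1}$ has image exactly $\mathbb{Y}_{n+t,t}$, i.e. surjectivity onto the next weight-class stratum.

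Therefore the real work, and the step I expect to be the main obstacle, is proving that surjectivity: that every $\mathbf{y} \in \mathbb{Y}_{n+t, t}$ is $H^{(n+t-1,t-1)}_i(\mathbf{y}')$ for some $i$ and some $\mathbf{y}' \in \mathbb{Y}_{n+t-1,t-1}$. The approach I would take is to read off from the explicit action table of the $H^{(v,h)}_i$ (displayed before Theorem~\ref{thm:I1I2forH}) exactly which sequences are hit: $H^{(v,h)}_i(\kappa(\mathbf{y}))$ ranges over all ways of taking $\mathbf{y} = y_{-v}\cdots y_h$, prepending/appending a balanced pair, and sliding it inward symmetrically. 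One then checks that as $\mathbf{y}'$ ranges over all of $\mathbb{Y}_{v,h}$ and $i$ over $0,\dots,2(v+h)+1$, every weight-$(h+1)$ string of the correct length is produced — I would do this by a direct positional argument: given the target $\mathbf{y}$, locate the leftmost occurrence of $01$ or $10$, delete it, verify the result has the right weight and length (hence lies in $\mathbb{Y}_{n+t-1,t-1}$), and match the deletion location against the index $i$ in the $H$-table to certify it is one of the listed insertions. Once surjectivity is established, the induction closes: write $\mathbf{y} = H_t(\mathbf{y}')$ with $\mathbf{y}' \in \mathbb{Y}_{n+t-1,t-1}$, apply the inductive hypothesis to get $\mathbf{y}' = H_{t-1} \circ \cdots \circ H_1(\mathbf{0}^n)$, and compose.
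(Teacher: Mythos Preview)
Your inductive scheme and its key step---locate an adjacent $01$ or $10$ in $\mathbf{y}$, delete it to drop into $\mathbb{Y}_{n+t-1,t-1}$, then recurse---is precisely the paper's proof. The paper phrases it as: apply BADs $D_t,\dots,D_1$ successively until reaching $\mathbf{0}^n$, then invoke condition~(D1) to invert each $D_i$ by a BAI $H_i$.

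The detour you take afterwards rests on a misreading. A BAI, as defined in Section~\ref{sec:BAIBAD}, is simply an insertion of the pair $01$ or $10$ at an arbitrary position of a bit string; it is \emph{not} required to be one of the Weyl-theoretic operators $H^{(v,h)}_i$ acting on $\mathbb{Y}_{v,h}$. (Those operators become BAIs only after conjugation by the azby permutation~$\sigma$, and the display at the end of Section~\ref{sec:BAIBAD} shows that the family $\{\sigma\circ H^{(v,h)}_i : 0\le i\le 2(v+h)+1\}$ already exhausts all $2(v+h+1)$ possible $01/10$ insertions anyway.) Once this is clear, your ``surjectivity'' concern evaporates: deleting an adjacent $01$ or $10$ is tautologically undone by reinserting the same pair at the same spot, which is a BAI by definition. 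That is exactly the content of the paper's one-line appeal to~(D1), and it closes the induction immediately. The counting and positional-matching argument you sketch is not needed.
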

\begin{proof}
Remark that 
for any $\mathbf{y} \in \mathbb{Y}_{n+t, t}$,
$01$ or $10$ is a subword of $\mathbf{y}$.
In other words,
there exists a BAD $D_t$ such that
$D_t (\mathbf{y}) \in \mathbb{Y}_{n+t-1, t-1}$.
Therefore by repeating this remark,
there exist BADs $D_1, D_2, \dots, D_t$ such that
$D_1 \circ D_2 \circ \dots \circ D_t (\mathbf{y}) \in \mathbb{Y}_{n, 0}$,
i.e.,
$$D_1 \circ D_2 \circ \dots \circ D_t (\mathbf{y}) = \mathbf{0}^n.$$
By using the condition (D1) for generalized deletions repeatedly,
there exists BAIs $H_t, \dots, H_2, H_1$ such that
$\mathbf{y} = H_t \circ \dots \circ H_2 \circ H_1 (\mathbf{0}^n)$.
\end{proof}

From the proof above, the following is obtained.
\begin{corollary}\label{cor:iS0nEqY}
For any non-negative integers $n$ and $t$,
$$\mathrm{iS}^{(t)}( \mathbf{0}^n ) = \mathbb{Y}_{n+t, t}.$$
\end{corollary}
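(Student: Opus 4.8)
The plan is to prove the identity by the two evident inclusions. Throughout, $\mathrm{iS}^{(t)}$ denotes the balanced-adjacent-insertion sphere $\mathrm{iS}^{(t)}_{BA}$ and $\mathrm{iS}(\cdot)$ its one-step version, and I first record the elementary observation that a single BAI carries a word of $\mathbb{Y}_{v,h}$ to a word of $\mathbb{Y}_{v+1,h+1}$: inserting an adjacent block $01$ or $10$ increases the length by $2$ and the number of $1$'s by exactly $1$. This immediately yields the inclusion $\mathrm{iS}^{(t)}(\mathbf{0}^n)\subseteq\mathbb{Y}_{n+t,t}$ by induction on $t$: the base case $t=0$ reads $\mathrm{iS}^{(0)}(\mathbf{0}^n)=\{\mathbf{0}^n\}=\mathbb{Y}_{n,0}$, and in the inductive step every $\mathbf{z}$ occurring in $\mathrm{iS}^{(t)}(\mathbf{0}^n)=\bigcup_{\mathbf{z}\in\mathrm{iS}^{(t-1)}(\mathbf{0}^n)}\mathrm{iS}(\mathbf{z})$ lies in $\mathbb{Y}_{n+t-1,t-1}$ by hypothesis, so $\mathrm{iS}(\mathbf{z})\subseteq\mathbb{Y}_{n+t,t}$.

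For the reverse inclusion, fix $\mathbf{y}\in\mathbb{Y}_{n+t,t}$; the boundary cases $t=0$ and $n=0$ being routine, assume $n,t\ge 1$. By Lemma~\ref{lemma:YnttinSp} there are BAIs $H_1,\dots,H_t$ with $\mathbf{y}=H_t\circ\dots\circ H_1(\mathbf{0}^n)$, and the proof of that lemma in fact exhibits the intermediate words $\mathbf{y}_i:=H_i\circ\dots\circ H_1(\mathbf{0}^n)$ inside $\mathbb{Y}_{n+i,i}$ for each $0\le i\le t$. A short induction on $i$ now shows $\mathbf{y}_i\in\mathrm{iS}^{(i)}(\mathbf{0}^n)$: one has $\mathbf{y}_i=H_i(\mathbf{y}_{i-1})\in\mathrm{iS}(\mathbf{y}_{i-1})$ while $\mathbf{y}_{i-1}\in\mathrm{iS}^{(i-1)}(\mathbf{0}^n)$ by hypothesis, hence $\mathbf{y}_i\in\bigcup_{\mathbf{z}\in\mathrm{iS}^{(i-1)}(\mathbf{0}^n)}\mathrm{iS}(\mathbf{z})=\mathrm{iS}^{(i)}(\mathbf{0}^n)$. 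Specializing to $i=t$ gives $\mathbf{y}\in\mathrm{iS}^{(t)}(\mathbf{0}^n)$, and combining the two inclusions proves the corollary.

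There is no genuine obstacle here: all the content already sits in Lemma~\ref{lemma:YnttinSp}, whose engine is the combinatorial fact that a non-constant binary word contains $01$ or $10$ as a factor (so it admits a balanced adjacent deletion) together with condition (D1), which reverses that deletion into an insertion. The only point demanding a little attention is to keep track that the words of the chain lie in the successive sets $\mathbb{Y}_{n+i,i}$, since this is exactly what makes the recursive definition of $\mathrm{iS}^{(t)}$ applicable at each stage; but as observed at the outset this is a mere length-and-weight count.
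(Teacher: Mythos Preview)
Your proof is correct and follows essentially the same route as the paper, which derives the corollary directly from the proof of Lemma~\ref{lemma:YnttinSp}: one inclusion is the trivial length-and-weight count under a BAI, and the other reverses the chain of BADs in that lemma's proof into a chain of BAIs landing in the successive spheres. Your remark that the intermediate words must lie in $\mathbb{Y}_{n+i,i}$ ``to make the recursive definition applicable'' is superfluous---the recursive definition of $\mathrm{iS}^{(t)}_{BA}$ applies to any binary word---but this does not affect the argument.
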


For a non-negative integer $c$
and a bit $y$,
let $\mathrm{DYC}( c, y )$
denote the set of bit-sequences of length $2c$
with 
\begin{itemize}
\item the first bit is equal to $y$,
\item for any left subword $x_1 x_2 \dots x_i$ ($1 \le i \le 2c$),
$\# \{ 1 \le j \le i \mid x_j = y \} \ge \# \{ 1 \le j \le i \mid x_j \neq y \}$.
\end{itemize}
An element of $\mathrm{DYC}(c, y)$ is regarded as a Dyck path.
It is known that the cardinality of $\mathrm{DYC}(c, y)$ is a $c$th Catalan number.

\begin{lemma}\label{lem:DyckPathTransform}
For any $\mathbf{l} \in \mathrm{iS}^{(t)}( \text{null} )$
and any bit $y$,
there exist $s, \mathbf{p}$ and $\mathbf{l}'$ such that
$s$ is a non-negative integer,
$\mathbf{p} \in \mathrm{DYC}( s, \bar{y} )$,
$\mathbf{l}' \in \mathrm{iS}^{(t-s)} ( \text{null} )$,
and
$$ \mathbf{l} y = \mathbf{p} y \mathbf{l}',
$$
where $\text{null}$ is the null word,
i.e. the unique element of $\{0,1\}^0$,
and $\bar{y}$ is the flipped bit for $y$, i.e. $\bar{0} = 1$ and $\bar{1} = 0$.
\end{lemma}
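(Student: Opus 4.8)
The plan is to strip away the Weyl-group language using Corollary~\ref{cor:iS0nEqY} and then locate the splitting point $\mathbf{p}\,y\,\mathbf{l}'$ by a first-passage argument on a discrepancy walk. First I would apply Corollary~\ref{cor:iS0nEqY} with $n=0$, which gives $\mathrm{iS}^{(t)}(\text{null})=\mathbb{Y}_{t,t}$; thus $\mathbf{l}$ is a binary word of length $2t$ with exactly $t$ zeros and $t$ ones. Writing $\mathbf{l}y = x_1x_2\cdots x_{2t+1}$ (so $x_{2t+1}=y$), I would introduce the walk $d_0:=0$ and $d_i:=\#\{1\le j\le i \mid x_j=\bar y\}-\#\{1\le j\le i \mid x_j=y\}$, which changes by $+1$ on each letter $\bar y$ and by $-1$ on each letter $y$. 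Since $\mathbf{l}$ is balanced, $d_{2t}=0$, and hence $d_{2t+1}=-1$.

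Next I would let $k$ be the least index with $d_k=-1$; it exists because $d_{2t+1}=-1$, so $1\le k\le 2t+1$. Since the walk moves by unit steps from $0$, it cannot reach $-2$ before first reaching $-1$, so $d_i\ge 0$ for all $0\le i\le k-1$; in particular $d_{k-1}=0$ (so $k-1$ is even) and the step into level $-1$ is a letter $y$, i.e.\ $x_k=y$. I would then set $s:=(k-1)/2$ (so $0\le s\le t$), $\mathbf{p}:=x_1\cdots x_{k-1}$, and $\mathbf{l}':=x_{k+1}\cdots x_{2t+1}$, giving $\mathbf{l}y=\mathbf{p}\,y\,\mathbf{l}'$ with the middle letter $y$ being $x_k$. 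To see $\mathbf{p}\in\mathrm{DYC}(s,\bar y)$: its length is $2s$; if $s=0$ it is the null word, vacuously in $\mathrm{DYC}(0,\bar y)$; if $s\ge 1$, every prefix $x_1\cdots x_i$ with $1\le i\le k-1$ has $d_i\ge 0$, i.e.\ at least as many $\bar y$'s as $y$'s, and $d_1\in\{1,-1\}$ together with $d_1\ge 0$ forces $x_1=\bar y$.

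Finally I would verify $\mathbf{l}'\in\mathrm{iS}^{(t-s)}(\text{null})$ by a count: $\mathbf{l}'$ has length $2t+1-k=2(t-s)$; the word $\mathbf{l}y$ has $t$ letters $\bar y$ and $t+1$ letters $y$, while $\mathbf{p}$ is balanced ($d_{k-1}=0$) with $s$ of each, and the excised middle letter is a $y$, so $\mathbf{l}'$ has $t-s$ letters $\bar y$ and $t-s$ letters $y$; hence $\mathbf{l}'\in\mathbb{Y}_{t-s,t-s}=\mathrm{iS}^{(t-s)}(\text{null})$ by Corollary~\ref{cor:iS0nEqY} again, which finishes the proof. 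The argument is essentially a single first-passage decomposition, so there is no deep obstacle; the one thing that needs care is the bookkeeping that forces $\mathbf{p}$ to be a \emph{balanced} prefix (this is what makes $\mathbf{l}'$ land back in $\mathbb{Y}_{t-s,t-s}$), which is precisely why ``the first time the discrepancy reaches $-1$'' is the correct cut, together with getting the conventions right ($\bar y\mapsto +1$) and not mishandling the degenerate cases $s=0$ and $t=0$.
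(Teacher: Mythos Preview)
Your proof is correct and takes essentially the same approach as the paper: the paper's function $h(\mathbf{x})=|\mathbf{x}|-2\,\mathrm{wt}(\mathbf{x})$ coincides (for $y=1$) with your discrepancy walk $d_i$, and the paper likewise cuts at the shortest prefix with value $-1$, which is exactly your first-passage index $k$. The only cosmetic difference is that the paper treats the case $y=1$ explicitly and defers $y=0$ to symmetry, whereas your $\bar y\mapsto +1$ convention handles both cases uniformly.
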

\begin{proof}
For any bit sequence $\mathbf{x}$, let $h(\mathbf{x})$ denote
$$
h(\mathbf{x}) := | \mathbf{x} | - 2 \mathrm{wt}( \mathbf{x} ),
$$
where $| \mathbf{x} |$ is the length of $\mathbf{x}$.
In other words,
$$h( \mathbf{x} ) = \text{the number of $0$} - \text{the number of $1$ of $\mathbf{x}$}.$$

Since $h(\mathbf{x}' \mathbf{x}'') = h(\mathbf{x}') + h(\mathbf{x}'')$
for any bit sequences $\mathbf{x}'$ and $\mathbf{x}''$,
we can claim that
\begin{itemize}
\item $h(\mathbf{x} 0) = h(\mathbf{x}) + 1$,
\item $h(\mathbf{x} 1) = h(\mathbf{x}) - 1$
\item $h(\mathrm{l} y) = h(\mathrm{l}) + h(y) = h(y)$.
\end{itemize}

Here we prove the statement for a case $y=1$.
The other case $y=0$ is proven in a similar argument.

Let $\mathbf{p}'$ be a left subword of $\mathbf{l} y$
such that
$$
h( \mathbf{p}' ) = h(\mathbf{p} y) = h(\mathbf{p}) + h(y) = h(y) (=-1)
$$
and
$\mathbf{p}'$ ends with $1$.
Such $\mathbf{p}'$ exists since $\mathbf{l} y$ is an instance.
We chose $\mathbf{p}'$ as the shortest left subwords.
Note that $\mathbf{p}'$ is written as $\mathbf{p} y$ for some bit sequence $\mathbf{p}$.
Since $h(y) = h(\mathbf{p}') = h(\mathbf{p} y) = h(\mathbf{p}) + h(y)$,
$h(\mathbf{p}) = 0$ holds.
On the other hand, $h( \mathbf{q} ) \ge 0$ holds for any left subword $\mathbf{q}$ of $\mathbf{p}$.
If not, it contradicts the choice of $\mathbf{p}$.
Note that $h( \mathbf{q} ) \ge 0$ is equivalent to
the number of $0$ is greater than or equal to the number of $1$ in $\mathbf{q}$,
In other words, $\mathbf{p} \in \mathrm{DYC}( s, \bar{y} )$,
where $s := | \mathbf{p} | / 2$.

Set $\mathbf{l}'$ as the right subword of $\mathbf{l} y$ that is obtained
by deleting $\mathbf{p} y$.
Then the length $| \mathbf{l}' |$ is 
$$
| \mathbf{l}' |
=
| \mathbf{l} y  | - | \mathbf{p} y| = (2t+1) - (2s+1) = 2(t-s).
$$
Since $h(\mathbf{l}) = h(\mathbf{l} y) - h(\mathbf{p} y)
= h(y) - h(y) = 0$,
$\mathbf{l}' \in \mathbb{Y}_{t-s, t-s} = \mathrm{iS}^{(t-s)} (\mathrm{null})$.
\end{proof}

\begin{lemma}\label{lem:DycPartition}
$\mathbf{y} = y_1 y_2 \dots y_n \in \{ 0, 1 \}^n$.

For any $\mathbf{x} \in \mathrm{iS}^{(t)}( y )$,
there exists 
$c_1, c_2, \dots, c_{n+1} \ge 0$ with $c_1 + c_2 + \dots + c_{n+1} = t$
such that
$$
\mathbf{x} = \mathbf{p}_1 y_1 \mathbf{p}_2 y_2
\dots \mathbf{p}_t y_t \mathbf{l}_{t+1},$$
where
$\bar{y}_i := 1 - y_i$, 
$\mathbf{p}_i \in \mathrm{DYC}( c_i, \bar{y_i} )$ (for $1 \le i \le n$),
and $\mathbf{l}_{t+1} \in \mathbb{Y}_{c_{n+1}, c_{n+1} }$.
\end{lemma}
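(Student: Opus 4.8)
The plan is to prove the statement in two stages. In the first stage I would establish a coarser ``balanced block'' decomposition: for $\mathbf{y} = y_1 y_2 \cdots y_n$, every $\mathbf{x} \in \mathrm{iS}^{(t)}(\mathbf{y})$ can be written as $\mathbf{x} = \mathbf{l}_0\, y_1\, \mathbf{l}_1\, y_2\, \mathbf{l}_2 \cdots y_n\, \mathbf{l}_n$ where each $\mathbf{l}_i$ lies in $\mathbb{Y}_{t_i,t_i} = \mathrm{iS}^{(t_i)}(\mathrm{null})$ (Corollary \ref{cor:iS0nEqY}) for some $t_i \ge 0$ with $t_0 + t_1 + \cdots + t_n = t$; recall that $\mathbb{Y}_{m,m}$ is exactly the set of words of length $2m$ with equally many $0$'s and $1$'s, so ``$\mathbf{l}_i$ is of this type'' simply means ``$\mathbf{l}_i$ is balanced''. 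In the second stage I would refine this into the Dyck decomposition claimed in the lemma by sweeping the original symbols $y_1, \ldots, y_n$ from left to right and applying Lemma \ref{lem:DyckPathTransform} once at each symbol.

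For the first stage I would induct on $t$. The case $t = 0$ is trivial ($\mathbf{x} = \mathbf{y}$, all $\mathbf{l}_i$ the null word). For the inductive step, write $\mathbf{x} = H(\mathbf{x}')$ with $H$ a single balanced adjacent insertion and $\mathbf{x}' \in \mathrm{iS}^{(t-1)}(\mathbf{y})$, and take the decomposition $\mathbf{x}' = \mathbf{l}_0'\, y_1 \cdots y_n\, \mathbf{l}_n'$ supplied by induction. The crucial observation is that $H$ inserts the two-bit word ``$01$'' or ``$10$'' at a \emph{single} gap of $\mathbf{x}'$, and every gap of $\mathbf{x}' = \mathbf{l}_0' y_1 \cdots y_n \mathbf{l}_n'$ lies either in the interior of some $\mathbf{l}_i'$ or at one of its two end positions (the positions immediately before and after each $y_j$ are included here); hence the inserted pair cannot split any original symbol $y_j$ and must fall entirely inside one block's region. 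It therefore turns exactly one $\mathbf{l}_j'$ into a word of $\mathrm{iS}(\mathbf{l}_j') \subseteq \mathrm{iS}^{(t_j+1)}(\mathrm{null}) = \mathbb{Y}_{t_j+1, t_j+1}$ and leaves all other blocks and all the $y_j$'s in place, giving the desired decomposition of $\mathbf{x}$ with $t_j$ replaced by $t_j+1$ and total still $t$.

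For the second stage I process the blocks left to right, maintaining the invariant that the block currently sitting just before $y_i$ is balanced. Starting from $\mathbf{l}_0$, apply Lemma \ref{lem:DyckPathTransform} to $\mathbf{l}_0\, y_1$ (valid since $\mathbf{l}_0 \in \mathrm{iS}^{(t_0)}(\mathrm{null})$): this rewrites $\mathbf{l}_0\, y_1 = \mathbf{p}_1\, y_1\, \mathbf{l}_0''$ with $\mathbf{p}_1 \in \mathrm{DYC}(c_1, \bar y_1)$ and $\mathbf{l}_0'' \in \mathrm{iS}^{(t_0 - c_1)}(\mathrm{null})$. Absorb the leftover into the next block by replacing $\mathbf{l}_1$ with $\mathbf{m}_1 := \mathbf{l}_0''\, \mathbf{l}_1$; a concatenation of two balanced words is balanced, so $\mathbf{m}_1 \in \mathbb{Y}_{m_1, m_1} = \mathrm{iS}^{(m_1)}(\mathrm{null})$ and Lemma \ref{lem:DyckPathTransform} applies again to $\mathbf{m}_1\, y_2$, and so on down the line. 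After $n$ such steps one obtains $\mathbf{x} = \mathbf{p}_1\, y_1\, \mathbf{p}_2\, y_2 \cdots \mathbf{p}_n\, y_n\, \mathbf{l}_{n+1}$ with each $\mathbf{p}_i \in \mathrm{DYC}(c_i, \bar y_i)$ and $\mathbf{l}_{n+1}$ the final leftover (merged with $\mathbf{l}_n$), which is balanced and hence in $\mathbb{Y}_{c_{n+1}, c_{n+1}}$. Each application of Lemma \ref{lem:DyckPathTransform} preserves the combined half-length of ``block plus leftover'', and since the $n$ symbols $y_i$ use up $n$ of the $n + 2t$ bits of $\mathbf{x}$, the blocks $\mathbf{p}_1, \ldots, \mathbf{p}_n, \mathbf{l}_{n+1}$ account for the remaining $2t$ bits, giving $c_1 + \cdots + c_{n+1} = t$.

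The only genuinely delicate point is in the first stage: one must make rigorous that a balanced adjacent insertion lands in exactly one of the $n+1$ gaps marked out by $y_1, \ldots, y_n$ and never straddles an original symbol — this is what makes the block structure stable under insertions and hence usable as an inductive hypothesis. After that, the second stage is essentially bookkeeping: repeated invocation of Lemma \ref{lem:DyckPathTransform}, merging of leftovers, and the count $\sum_i c_i = t$. (Minor remark: in the displayed formula the product should run up to $\mathbf{p}_n y_n$ and the last block should be $\mathbf{l}_{n+1}$, matching the condition ``$1 \le i \le n$'' in the statement.)
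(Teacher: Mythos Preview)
Your proposal is correct and follows essentially the same approach as the paper: first write $\mathbf{x} = \mathbf{l}_1 y_1 \mathbf{l}_2 y_2 \cdots \mathbf{l}_n y_n \mathbf{l}_{n+1}$ with each $\mathbf{l}_i$ balanced, then sweep left to right applying Lemma~\ref{lem:DyckPathTransform} and absorbing the leftover into the next block. The only real difference is that the paper asserts the first stage in one sentence (``Since $\mathbf{x}$ is obtained by $t$-BAIs to $\mathbf{y}$, $\mathbf{x}$ is written as \ldots'') without arguing that the $\mathbf{l}_i$ are balanced, whereas you supply an explicit induction on $t$; your observation about the typo in the displayed formula is also correct.
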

\begin{proof}
Since $\mathbf{x}$ is obtained by $t$-BAIs to $\mathbf{y}$,
$\mathbf{x}$ is written as
$$
\mathbf{x} = \mathbf{l}_1 y_1 \mathbf{l}_2 y_2 \dots \mathbf{l}_n y_n \mathbf{l}_{n+1}. 
$$
By applying Lemma \ref{lem:DyckPathTransform} repeatedly,
\begin{align*}
\mathbf{x}
 &= \mathbf{l}_1 y_1 \mathbf{l}_2 y_2 \dots \mathbf{l}_n y_n \mathbf{l}_{n+1}\\
 &= \mathbf{p}_1 y_1 \mathbf{l}'_1 \mathbf{l}_2 y_2 \dots \mathbf{l}_n y_n \mathbf{l}_{n+1}\\
 &= \mathbf{p}_1 y_1 \mathbf{p}_2 y_2 \mathbf{l}'_2 \dots \mathbf{l}_n y_n \mathbf{l}_{n+1}\\
 & \vdots\\
 &= \mathbf{p}_1 y_1 \mathbf{p}_2 y_2 \dots \mathbf{l}'_{n-1} \mathbf{l}_n y_n \mathbf{l}_{n+1}\\ 
 &= \mathbf{p}_1 y_1 \mathbf{p}_2 y_2 \dots \mathbf{p}_n y_n \mathbf{l}'_{n} \mathbf{l}_{n+1}\\
 &= \mathbf{p}_1 y_1 \mathbf{p}_2 y_2 \dots \mathbf{p}_n y_n \mathbf{l}'_{n+1}.
\end{align*}
The last equation holds by setting $\mathbf{l}'_{n+1} := \mathbf{l}'_{n} \mathbf{l}_{n+1}$.

By defining $c_i := | \mathbf{p}_i | / 2$,
the statement is proven.
\end{proof}

Lemma \ref{lem:DycPartition} guarantees existence of
a sequence of non-negative integers $c_1, c_2, \dots, c_{n+1}$ for any element
of $\mathrm{iS}^{(t)}( \mathbf{y} )$
with the property in the Lemma.
By considering lexicographic ordering,
there uniquely exists the minimum $c_1, c_2, \dots, c_{n+1}$.
Let $\mathrm{DYC}( \mathbf{y} ; c_1, c_2, \dots, c_{n+1} )$
be the set of such elements.

\begin{proof}[Proof for Theorem \ref{thm:SizeISBAI}]
By Corollary \ref{cor:iS0nEqY},
$\# \mathrm{iS}^{(t)} ( \mathbf{0}^n ) = \# \mathbb{Y}_{n+t, t} = \binom{n+2t}{t}$.

Next, we prove $\# \mathrm{iS}^{(t)} ( \mathbf{0}^n ) = \# \mathrm{iS}^{(t)} ( \mathbf{y} )$.

It is clear that $\mathrm{DYC}( \mathbf{y} ; c_1, c_2, \dots, c_{n+1} ) \subset \mathrm{iS}^{(t)}_{BA} ( \mathbf{y} )$.

Lemma implies that
$\mathrm{DYC}( \mathbf{y} ; c_1, c_2, \dots, c_{n+1})$ provides us with
the following partition:
$$
\mathrm{iS}^{(t)}_{BA} ( \mathbf{y} ) =
\sqcup_{c_1, c_2, \dots, c_{n+1}} \mathrm{DYC}( \mathbf{y} ; c_1, c_2, \dots, c_{n+1}).
$$
It is easy to check that
$
0 x_2 \dots x_{2c}  \in \mathrm{DYC}(c, 0)
$
if and only if
$1 \bar{x}_2 \dots \bar{x}_{2c} \in \mathrm{DYC}(c, 1)$.
This implies that $\# \mathrm{DYC}( \mathbf{y} ; c_1, c_2, \dots, c_{n+1} )
= \# \mathrm{DYC}( \mathbf{0}^n ; c_1, c_2, \dots, c_{n+1} )$ for any $\mathbf{y} \in \{0,1\}^n$.
Hence the cardinality is independent on the choice of $\mathbf{y}$.

Furthermore the cardinality is
\begin{align*}
\# \mathrm{iS}^{(t)}_{BA} ( \mathbf{y} )
&= \sum_{c_1, c_2, \dots, c_{n+1}} \# \mathrm{DYC}(y; c_1, c_2, \dots, c_{n+1})\\
&=  \sum_{c_1, c_2, \dots, c_{n+1}} C_{c_1} C_{c_2} \dots C_{c_n } \# \mathbb{Y}_{c_{n+1}, c_{n+1}},
\end{align*}
where $C_i$ is the $i$th Catalan number.
\end{proof}

\begin{corollary}
$$
\sum_{c_1, c_2, \dots, c_{n+1} \ge 0, c_1 + c_2 + \dots + c_{n+1} = t}
   (c_{n+1} + 1) C_{c_1} C_{c_2} \dots C_{c_{n+1}}
= \binom{n + 2t }{ t }.
$$
\end{corollary}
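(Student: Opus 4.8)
The plan is to obtain the identity as an immediate byproduct of Theorem~\ref{thm:SizeISBAI}, by matching the two evaluations of $\#\mathrm{iS}^{(t)}_{BA}(\mathbf{0}^n)$ that appear in (and around) its proof. Recall first that the proof of Theorem~\ref{thm:SizeISBAI} expresses, for the all-zero word $\mathbf{0}^n$,
$$
\#\mathrm{iS}^{(t)}_{BA}(\mathbf{0}^n)
= \sum_{c_1 + c_2 + \dots + c_{n+1} = t} C_{c_1} C_{c_2} \cdots C_{c_n}\,\#\mathbb{Y}_{c_{n+1}, c_{n+1}},
$$
the sum running over non-negative integers $c_1,\dots,c_{n+1}$; this rests on Corollary~\ref{cor:iS0nEqY} (identifying $\mathrm{iS}^{(t)}_{BA}(\mathbf{0}^n)=\mathbb{Y}_{n+t,t}$) and the Dyck-block partition of Lemma~\ref{lem:DycPartition}, together with the fact that $\#\mathrm{DYC}(c,1)=C_c$.

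First I would rewrite the last factor. Since $\#\mathbb{Y}_{v,h}=\binom{v+h}{v}$, one has $\#\mathbb{Y}_{c,c}=\binom{2c}{c}$, and the elementary identity $\binom{2c}{c}=(c+1)\cdot\tfrac{1}{c+1}\binom{2c}{c}=(c+1)\,C_c$ gives $\#\mathbb{Y}_{c_{n+1},c_{n+1}}=(c_{n+1}+1)\,C_{c_{n+1}}$. Substituting this into the displayed sum turns its right-hand side into precisely $\sum (c_{n+1}+1)\,C_{c_1}C_{c_2}\cdots C_{c_{n+1}}$, i.e.\ the left-hand side of the corollary. On the other hand, Theorem~\ref{thm:SizeISBAI} applied with $\mathbf{x}=\mathbf{0}^n$ gives $\#\mathrm{iS}^{(t)}_{BA}(\mathbf{0}^n)=\binom{n+2t}{t}$. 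Equating the two evaluations yields the identity. There is no genuine obstacle here: all of the combinatorial content is already in Theorem~\ref{thm:SizeISBAI}, and the only fresh step is the trivial rewriting $\binom{2c}{c}=(c+1)C_c$.

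If instead a self-contained check is wanted, the same identity follows purely formally. With $C(x)=\frac{1-\sqrt{1-4x}}{2x}$ the Catalan generating function, one has $\sum_{c\ge0}(c+1)C_c x^c=\frac{d}{dx}\bigl(xC(x)\bigr)=\frac{1}{\sqrt{1-4x}}$, while the standard expansion $\dfrac{C(x)^n}{\sqrt{1-4x}}=\sum_{t\ge0}\binom{2t+n}{t}x^t$ holds (provable by induction on $n$ from $C(x)=1+xC(x)^2$, or via Lagrange inversion). Hence the left-hand side of the corollary equals $[x^t]\,C(x)^n\cdot\dfrac{1}{\sqrt{1-4x}}=\binom{n+2t}{t}$; in this alternative route the only point needing care is justifying that last expansion.
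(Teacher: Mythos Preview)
Your argument is correct and is essentially identical to the paper's own proof: both derive the identity directly from the final displayed formula in the proof of Theorem~\ref{thm:SizeISBAI} via the rewriting $\#\mathbb{Y}_{c_{n+1},c_{n+1}}=\binom{2c_{n+1}}{c_{n+1}}=(c_{n+1}+1)C_{c_{n+1}}$. Your additional generating-function verification is a pleasant independent check but is not needed for the corollary as stated.
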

\begin{proof}
It immediately follows from 
the proof for Theorem \ref{thm:SizeISBAI}
and
$$C_{c_{n+1}} = \frac{1}{c_{n+1}+1} \binom{ 2 c_{n+1} }{ c_{n+1} }
\text{ and }
\# \mathbf{Y}_{c_{n+1}, c_{n+1} } = \binom{ 2 c_{n+1} }{ c_{n+1} }.
$$
\end{proof}

\section{Conclusion}
This paper pointed out a connection between standard insertions for bits
and action by right subwords of the reduced expression of a reflection
related to the highest coroot on minuscule elements of type $B$.
Inspired by the connection, generalized insertions, path insertions and balanced
adjacent insertions are defined.
Furthermore, similar combinatorial properties to standard insertions and 
ID codes are obtained.
The author expect that more similar properties will be found to generalized
insertions.
Various interesting properties of standard insertions have been found since 1960's.

The generalized insertions introduced in this paper are related to minuscule
elements only of type $A$.
The author is trying to generalize insertions to other types, for example D.
A general argument that does not depend on a type of Weyl group will be
future work while it seems to be difficult.

\section*{Acknowledgment}
This paper is partially supported by
KAKENHI 16K12391, 18H01435 and 16K06336.
The author would like to thank Dr. Kento Nakada, Dr. Taro Sakurai and Prof. Richard Green for valuable comments.

\bibliography{reference}

\end{document}